\newtheorem{prop}{Proposition}[section]
\numberwithin{equation}{section}
\newcommand{\beq}{\begin{eqnarray}}
\newcommand{\beqq}{\begin{eqnarray*}}
\newcommand{\eeq}{\end{eqnarray}}
\newcommand{\eeqq}{\end{eqnarray*}}
\newtheorem{theorem}{Theorem}[section]
\newtheorem{lemma}{Lemma}[section]
\newtheorem{assumptions}[theorem]{Assumption}
\definecolor{link-color}{rgb}{0.15,0.4,0.15}
\newcommand{\N}{\mathbb{N}}
\newcommand{\E}{\mathbb{E}}
\DeclareMathOperator{\er}{\mathbf{E}}
    \def\d{{\textnormal d}}
\newenvironment{eqnarr}{\begin{IEEEeqnarray}{rCl}}{\end{IEEEeqnarray}\ignorespacesafterend}
\renewcommand{\eqref}[1]{\hyperref[#1]{(\ref*{#1})}}
\newcommand*{\norm}[1]{\lVert #1 \rVert}
    \def\beq{\begin{eqnarr}}%arw
    \def\eeq{\end{eqnarr}}%arw
    \def\beqq{\begin{eqnarray*}} %arw
    \def\eeqq{\end{eqnarray*}} %arw
        \def\d{{\rm d}}
    \def\d{{\textnormal d}}
\newtheorem{remark}{Remark}[section]
\newcommand*{\pref}[1]{\hyperref[#1]{(\ref*{#1})}}
\newcommand*{\refpref}[2]{\hyperref[#2]{\ref*{#1}(\ref*{#2})}}
  \newcommand{\D}{{\rm d}}
\newcommand{\fT}{\overset{_\rightarrow}{\texttt T}}
\newcommand{\fS}{\overset{_\rightarrow}{\texttt S}}
\newcommand{\fF}{\overset{_\rightarrow}{\texttt F}}
\newcommand{\bT}{\overset{_\leftarrow}{\texttt T}}
\newcommand{\T}{\texttt T}
\newcommand{\M}{\texttt M}
\newcommand{\K}{\texttt K}
\newcommand{\U}{\texttt U}
\newcommand{\bS}{\overset{_\leftarrow}{\texttt S}}
\newcommand{\ebS}{\overset{_\leftarrow}{\emph{\texttt S}}}
\newcommand{\bF}{\overset{_\leftarrow}{\texttt F}}
\newcommand{\ebF}{\overset{_\leftarrow}{\emph{\texttt F}}}
\newcommand{\bL}{\overset{_\leftarrow}{\texttt L}}
\newcommand{\diagonal}{\texttt{diag}}
\newcommand{\bA}{\overset{_\leftarrow}{\texttt A}}
\newcommand{\fA}{\overset{_\rightarrow}{\texttt A}}
\newcommand{\ebA}{\overset{_\leftarrow}{\emph{\texttt A}}}
\numberwithin{equation}{section}
\theoremstyle{plain}
\begin{document}

\begin{frontmatter}
\title{Multi-species neutron transport equation}
%continuous-state branching processes}

\runtitle{Recurrent extension for ssMp in a Wedge}
%\thankstext{T1}{Footnote to the title with the ``thankstext'' command.}

\begin{aug}
\author{\fnms{Alexander M.G. Cox}\thanksref{t3}\ead[label=e1]{a.m.g.cox@bath.ac.uk}}, \author{\fnms{Simon C. Harris}\thanksref{t3}\ead[label=e2]{s.c.harris@bath.ac.uk}},\\
\author{\fnms{Emma L. Horton}\ead[label=e5]{e.l.horton@bath.ac.uk}}
\and \author{\fnms{Andreas E. Kyprianou}\thanksref{t3}\ead[label=e3]{a.kyprianou@bath.ac.uk}}
%\and 
%\author{\fnms{Minmin Wang}\thanksref{t3}\ead[label=e4]{m.wang@bath.ac.uk}},

\thankstext{t3}{Supported by EPSRC grant EP/P009220/1. Emma Horton is supported by a PhD scholarship with joint funding from {\it EPSRC CDT SAMBa} and industrial partner {\it Wood} (formerly {\it Amec Foster Wheeler}).}

%\thankstext{t2}{Supported by a Royal Thai PhD scholarship}
%\thankstext{t3}{Second supporter of the project}
%\runauthor{F. Author et al.}

%\affiliation{University of Bath, University of Auckland, \\University of Bath and University of Bath}

\address{
A. M. G. Cox,  \\
E. L. Horton,\\
A.E. Kyprianou and \\
University of Bath\\
Department of Mathematical Sciences \\
Bath, BA2 7AY\\
 UK.\\
\printead{e1}\\
\printead{e5}\\
\printead{e3}\\
%\printead{e4}
%\\
%\phantom{E-mail:\ }\printead*{e2}
}

\address{S. C. Harris\\
Department of Statistics\\
University of Auckland\\
Private Bag 92019\\
Auckland 1142\\
New Zealand\\
\printead{e2}
}
\end{aug}

\begin{abstract}\hspace{0.1cm}
The Neutron Transport Equation (NTE) describes the flux of neutrons through inhomogeneous fissile medium. Whilst well treated in the nuclear physics literature (cf. \cite{DS, PP}), the NTE has had a somewhat scattered treatment in mathematical literature with a variety of different approaches (cf. \cite{DL6, M-K}). Within a probabilistic framework it has somewhat undeservingly received little attention in recent years; nonetheless,   probabilistic treatments can be found see for example \cite{LPS, MWY, MT, PR, BLP, BLP2}. In this article our aim is threefold. First we want to introduce a slightly more general setting for the NTE, which gives a more complete picture of the different species of particle and radioactive fluxes that are involved in fission. Second we consolidate the classical $c_0$-semigroup approach to solving the NTE with the  method of stochastic representation which involves expectation semigroups. Third we provide the leading asymptotic of our multi-species NTE, which will turn out to be crucial for further stochastic analysis of the NTE in forthcoming work \cite{SNTE, SNTE2, MCNTE}. The methodology used in this paper harmonises the culture of expectation semigroup analysis from the theory of stochastic processes against $c_0$-semigroup theory from functional analysis. In this respect, our presentation is thus part review of existing theory and part presentation of new research results based on generalisation of existing results.
\end{abstract}

\begin{keyword}[class=MSC]
\kwd[Primary ]{82D75, 60J80, 60J75}
\kwd{}
\kwd[; secondary ]{60J99}
\end{keyword}

\begin{keyword}
\kwd{Neutron Transport Equation, principal eigenvalue, semigroup theory, Perron-Frobenius decomposition}
\end{keyword}

\end{frontmatter}

\section{Introduction}
The neutron transport equation (NTE) describes the flux of neutrons across a directional planar cross-section  in an inhomogeneous fissile medium (typically measured is number of neutrons per cm$^2$ per second). 
As such, flux is described as a function of time, $t$, Euclidian location, $r\in \mathbb{R}^3$, direction of travel, $\Omega \in \mathbb{S}_2$, speed  $c>0$ (and hence velocity $\upsilon = c\Omega$),  and neutron energy, $E\in \mathbb{R}$. It is not uncommon in the physics literature, as indeed we shall do here, to assume that energy is a function of velocity ($E = m|\upsilon|^2/2$), thereby reducing the number of variables by one. This allows us to describe the dependency of flux more simply in terms of time and, what we call, the {\it configuration variables} $ (r, \upsilon) \in  D \times V$ where $D\subseteq\mathbb{R}^3$ is a smooth, open,  connected and bounded domain of concern such that $\partial D$ has zero Lebesgue measure and 
$V$ is the velocity space, which can now be taken to be $V = \{v\in\mathbb{R}^3: \upsilon_{\texttt{min}}<|v|<\upsilon_{\texttt{max}}\}$, where $0<\upsilon_{\texttt{min}}<\upsilon_{\texttt{max}}<\infty$. %Note, this implies in particular that the range of possible neutron energies is bounded from below strictly away from zero as well as bounded from above. 

\smallskip

Before stating the NTE, let us remind the reader of some elementary nuclear physics, which is required to  describe the evolution of neutron flux. In the most basic of flux models, there are essentially only four processes at the level of the atomic nuclei which contribute to the evolution of neutron flux. 

\smallskip

The first is {\it spontaneous neutron emission} from unstable nuclei. This comes from  radioactive isotopes whose nuclei are excited. They cause what is known as  {\it non-transmutation emissions}, in which a neutron is ejected with an escape velocity ({\it neutron emission}), or, conversely, what are called {\it transmutation emissions} in which the nucleus instantaneously fragments  into two or more nuclei ({\it spontaneous fission}) with a range of possible masses, emitting one or more neutrons with escape velocities in the process. 

\smallskip

The second process pertains to {\it neutron scattering}. This is where a neutron travelling with a given velocity passes in close proximity to an atomic nucleus, which, in our model,  results in an instantaneous change of  velocity.
\smallskip

The third process is {\it neutron-induced fission}. This is the classical setting in which a neutron travelling with a given velocity strikes an atomic nucleus sending it into an excited state, from which it instantaneously fragments into two or more nuclei, simultaneously releasing one or more neutrons.

\smallskip 

The fourth and final process is {\it neutron capture}. In this setting, a neutron travelling with a given velocity strikes an atomic nucleus, but instead of causing nuclear fission, it is absorbed into the nucleus. It can also be the case that neutrons decay into other subatomic particles, and thus disappear from the system. To all intents and purposes, we can treat this as neutron capture.

\smallskip 

When modelling the transmission of neutrons in a fissile material, those neutrons which have been released from nuclei are known as {\it prompt neutrons}.

\smallskip 

With more advanced modelling, one can also take account of the fact that some of the processes described above can also involve other types of nuclear emissions, often in addition to neutrons. 
 These include alpha and beta particles and gamma radiation. Whilst the former two are not sufficiently energetic to cause fission, sufficiently energetic gamma rays are able to induce fission.

\smallskip 

Spontaneous fission and neutron-induced fission can also produce what are known as {\it delayed neutrons}. These  are neutrons released from a fission product (isotope) some time after fission has occurred. In terms of modelling, they are spontaneous neutron emissions which occur at the site of neutron-induced fission but at a moment  later in time.  Delayed neutrons are only in a delayed state until they are released after which they are considered as prompt neutrons. 

\smallskip 

We refer to models which take account of the full range of flux profiles as multi-species models.

\section{Neutron Transport Equation}
Let us now write down the basic neutron transport equation (prompt neutrons only), which has been widely considered in a variety of physics and engineering literature (cf. \cite{DS, PP}, to name but two classical references), and somewhat more sporadically studied in the mathematical literature. See \cite{DL6, M-K, LPS} for the three most  authoritative mathematical texts in more recent times, as well as e.g. \cite{MWY, Hbook, MT} for some of the rarer examples of the probabilistic treatment of the NTE.

\smallskip

Neutron flux at time $t\geq  0$ is henceforth identified as $\Psi_t: {D}\times V\to [0,\infty)$, and the classical presentation of its evolution in time is given by the integro-differential equation, also known as  the {\it forward neutron transport equation}\footnote{Here and everywhere else in the document, $\nabla$ is the gradient operator with respect to the variable $r\in\mathbb{R}^3$.},   
\begin{align}
\frac{\partial}{\partial t}\Psi_t(r, \upsilon)&=- \upsilon\cdot\nabla\Psi_t(r, \upsilon)  -\sigma(r, \upsilon)\Psi_t(r, \upsilon) +Q(r,\upsilon, t)\notag\\
&
+\int_{V}\Psi_t(r, \upsilon') \sigma_{\texttt{s}}(r, \upsilon') \pi_{\texttt{s}}(r,  \upsilon', \upsilon)\d\upsilon' + \int_{V}\Psi_t(r, \upsilon')\sigma_{\texttt{f}}(r, \upsilon')  \pi_{\texttt{f}}(r, \upsilon', \upsilon)\d\upsilon',
\label{NTE}
\end{align}
where the different components (or {\it cross-sections as they are known in the physics literature}) are all uniformly bounded and measurable with the following interpretation:
\begin{align*}
\sigma_{\texttt{s}}(r, \upsilon') &: \text{ the rate at which scattering occurs from incoming velocity $\upsilon'$,}\\
\sigma_{\texttt{f}}(r, \upsilon') &: \text{  the rate at which fission occurs from incoming velocity $\upsilon'$,}\\
\sigma(r, \upsilon) &: \text{ the sum of the rates } \sigma_{\texttt{f}}+ \sigma_{\texttt{s}}, \text{ also known as the {\it total} cross section} \\
\pi_{\texttt{s}}(r, \upsilon', \upsilon)\d\upsilon' &: \text{  the scattering yield at velocity $\upsilon$ from incoming velocity }  \upsilon', \\
 &\hspace{0.5cm}\text{ satisfying }\textstyle{\int_{V}}\pi_{\texttt{s}}(r, \upsilon,  \upsilon')\d \upsilon'=1,\\
 \pi_{\texttt{f}}(r, \upsilon', \upsilon)\d\upsilon' &:  \text{  the neutron yield at velocity $\upsilon$ from fission with incoming velocity }   \upsilon',\\
 &\hspace{0.5cm}\text{ satisfying }\textstyle{\int_{V}}\pi_{\texttt{f}}(r, \upsilon,  \upsilon')\d \upsilon'<\infty, \text{ and }\\
 Q(r,\upsilon, t) &: \text{ non-negative source term. }
\end{align*}
It is normal to assume that  all quantities are uniformly bounded away from infinity.
It is also usual to assume the additional boundary conditions 
\begin{equation}
\left\{
\begin{array}{ll}
\Psi_0(r, \upsilon) = g(r, \upsilon) &\text{ for }r\in  D, \upsilon\in{V},
\\
&
\\
\Psi_t(r, \upsilon)= 0& \text{ for } t\ge 0 \text{ and } r\in \partial D
\text{ if }\upsilon
\cdot{\bf n}_r<0,
\end{array}
\right.
\label{BC}
\end{equation}
where  ${\bf n}_r$ is the outward facing normal of $D$ at $r\in \partial D$ and $g: D\times {V}\to [0,\infty)$ is a bounded, measurable function which we will later assume has some additional properties.  Roughly speaking, as the forward equation describes where  particles could have evolved from  in order to contribute to the current configuration, this boundary condition means that particles from outside the domain with incoming velocity are not taken into account.
 The second of the above two boundary condition is sometimes written $\Psi_t|_{\partial D^-} =0$, where $\partial D^- = \{(r,\upsilon)\in \partial D\times V:\upsilon
\cdot{\bf n}_r<0 \}$.
It is also usual to set $Q =0$ when considering a rector with a multiplying medium,
 as the resulting fission will overwhelm the radioactive source term.

\smallskip

The notion of a solution of the form \eqref{NTE} turns out to be too strong to expect to make mathematical sense of it. This is predominantly due to the non-diffusive nature of the equation, in particular the non-local nature of the scattering and fission operators as well as regularity issues on the domain $D\times V$ in relation to continuity properties of e.g. the operator $\upsilon\cdot\nabla$. It is much more natural to look for solutions that belong to e.g. an appropriate $L_2$ space.  This is, moreover, helpful when looking to understand \eqref{NTE} as a {\it backwards} equation, rather than a {\it forwards} equation.
\smallskip

 With some rearrangements, the components of \eqref{NTE} separate into transport, scattering and fission. 
  Specifically,  
\begin{equation}
\left\{
\begin{array}{rll}
{\fT}{g}(r, \upsilon)  &:= - \upsilon\cdot\nabla{g}(r, \upsilon)   - \sigma(r,\upsilon){g}(r, \upsilon) &\text{ (forwards transport) }\\
&\\
{\fS}{g}(r, \upsilon)  &:= \int_{V}{g}(r, \upsilon') \sigma_{\texttt{s}}(r, \upsilon)\pi_{\texttt{s}}(r, \upsilon', \upsilon)\d\upsilon'  &\text{ (forwards scattering) }\\
&\\
{\fF}{g}(r, \upsilon) &: =  \int_{V}{g}(r, \upsilon') \sigma_{\texttt{f}}(r, \upsilon) \pi_{\texttt{f}}(r, \upsilon', \upsilon)\d\upsilon'&\text{ (forwards fission)}
\end{array}
\right.
\label{forwards_operators}
\end{equation}
such that all operators are defined on $D\times V$ and their action is zero otherwise.
Let us momentarily consider the operator on the right-hand side of \eqref{NTE} as acting on $L_2(D\times V)$, the space of square integrable functions on $D\times V$, and write 
\[
\langle f, g\rangle =  \int_{  D\times V}f(r,\upsilon)g(r,\upsilon)\d r\d\upsilon
\]
for the associated inner product.  Note that, for $f,g\in L_2(D\times V)$ such that both $\upsilon\cdot\nabla f$ and $\upsilon\cdot\nabla g$ are well defined as distributional derivatives, which are also  in the space $ L_2(D\times V)$, with $g$ respecting the second of the boundary conditions in \eqref{BC}, we can verify with a simple integration by parts that, for $\upsilon\in V$,
\begin{align} 
\langle f, \upsilon\cdot\nabla g \rangle &= \int_{\partial D\times V} (\upsilon\cdot \upsilon') f(r,\upsilon')g(r,\upsilon') \d r\d \upsilon'  -\langle  \upsilon\cdot\nabla  f,g \rangle=  -\langle  \upsilon\cdot\nabla  f,g \rangle
\label{nabladual}
\end{align}
 {\it providing} we insist that $f$ respects the boundary $f (r, \upsilon)= 0$ for $r\in \partial D$
 if $\upsilon
\cdot{\bf n}_r>0$. Moreover, Fubini's theorem also tells us that, for example,  with $f, g\in L_2(D\times V)$,
\begin{align*}
\langle f, \int_{V} g(\cdot, \upsilon')\sigma_{\texttt{s}}(\cdot,\upsilon')\pi_{\texttt{s}}(\cdot, \upsilon', \cdot)\d\upsilon' \rangle 
&=\int_{{D}\times V\times V} f(r,\upsilon)\sigma_{\texttt{s}}(r,\upsilon')g(r, \upsilon')\pi_{\texttt{s}}(r, \upsilon', \upsilon)\d\upsilon'  \d r \d \upsilon \\
&=\int_{{D}\times V}\sigma_{\texttt{s}}(r,\upsilon') \int_V f(r,\upsilon)\pi_{\texttt{s}}(r, \upsilon', \upsilon)  \d \upsilon  \,  g(r, \upsilon')\d r\d\upsilon' \\
&=\langle
\sigma_{\texttt{s}}(\cdot, \cdot) \int_V f(\cdot,\upsilon)\pi_{\texttt{s}}(\cdot, \cdot, \upsilon)  \d \upsilon, g
\rangle.
\end{align*}
These computations tell us that, for $f, g \in L_2({D}\times V)$ such $\upsilon\cdot\nabla g$ and  $\upsilon\cdot\nabla f$ are well defined in the distributional sense and, moreover, that $g(r, \upsilon)= 0$ for $r\in \partial D$  if $\upsilon
\cdot{\bf n}_r<0$, and for $f \in L_2({D}\times V)$ such that $f (r, \upsilon)= 0$ for $r\in \partial D$  if $\upsilon
\cdot{\bf n}_r>0$, 
 \[
 \langle f, (\fT + \fS + \fF )g\rangle  =  \langle (\bT + \bS + \bF ) f, g\rangle,
 \]
where now we identify the transport, scattering and fission operators as
\begin{equation}
\left\{
\begin{array}{rll}
{\bT}{f}(r, \upsilon)  &:=  \upsilon\cdot\nabla{f}(r, \upsilon)   &\text{ (backwards transport) }\\
&\\
{\bS}{f}(r, \upsilon) &:= \sigma_{\texttt{s}}(r, \upsilon)\int_{V}{f}(r, \upsilon') \pi_{\texttt{s}}(r, \upsilon, \upsilon') \d\upsilon'  - \sigma_{\texttt{s}}(r, \upsilon){f}(r, \upsilon) &\text{ (backwards scattering) }\\
&\\
{\bF}{f}(r, \upsilon) &: =  \sigma_{\texttt{f}}(r, \upsilon) \int_{V}{f}(r, \upsilon') \pi_{\texttt{f}}(r, \upsilon, \upsilon')\d\upsilon' -\sigma_{\texttt{f}}(r, \upsilon)f (r,\upsilon)&\text{ (backwards fission)}
\end{array}
\right.
\label{backwards_operators}
\end{equation}
such that all operators are defined on $D\times V$ with zero action otherwise.
The reader will immediately note that, although the terms in the sum ${\bT} + {\bS} + {\bF}$ are identifiable as the adjoint of the terms in the sum ${\fT} + {\fS} + {\fF}$, 
the same can not be said for the individual `\texttt{T}', `\texttt{S}' and `\texttt{F}' operators. That is to say, the way we have grouped the terms does not allow us to say that e.g. ${\bT}$ is the adjoint operator to ${\fT}$ and so on.  
%Note in particular that in the definition of ${\bS}$, we have chosen to take advantage of the fact that $\pi_{\texttt{s}}(r,\upsilon, V)=1$ and write 
%\[
%\sigma_{\texttt{s}}(r, \upsilon)\int_{V}{f}(r, \upsilon')\pi_{\texttt{s}}(r, \upsilon, \upsilon') \d\upsilon'  -\sigma_{\texttt{s}}(r, \upsilon) {f}(r, \upsilon) =\sigma_{\texttt{s}}(r, \upsilon)\int_{V}[{f}(r, \upsilon') - {f}(r, \upsilon) ]\pi_{\texttt{s}}(r, \upsilon, \upsilon') \d\upsilon' 
%\]

\smallskip

The reason for this difference in grouping of terms lies with how one reads the operators in terms of infinitesimal generators as a probabilist. Although this will not make any difference in the analysis of this paper, we keep to this notation for the sake of consistency with further related articles which offer a probabilistic perspective on the backwards NTE; see \cite{MCNTE, SNTE, SNTE2}.
\smallskip

Roughly speaking, ${\bT}$, with an appropriately defined domain, %, operating on functions in  $g\in  L_2(D\times V)$ such that  $\upsilon\cdot\nabla g$ is well defined as a distributional derivative in $ L_2(D\times V)$, with $g$ respecting the boundary condition $g(r, \upsilon)= 0$ for $r\in \partial D$,
 is the generator of the rather simple Markov process consisting of a deterministic motion with velocity $\upsilon$, i.e. transport due to pure advection, with killing on exiting the domain $D$. Similarly, with an appropriately defined domain, the operator ${\bS}$ is the generator corresponding to scattering,  in which a particle travelling with velocity $\upsilon$ at position $r$ is removed at rate $\sigma_{\texttt{s}}$ and replaced by a new particle at $r$ with velocity $\upsilon'$ chosen with probability $\pi_{\texttt s}(r,\upsilon,\upsilon')\d\upsilon'$. Taking advantage of the fact that $\textstyle{
\int_V\pi_{\texttt{s}}(r,\upsilon, \d\upsilon')\d\upsilon'=1}$ we can also  write 
\[
\sigma_{\texttt{s}}(r, \upsilon)\int_{V}{f}(r, \upsilon')\pi_{\texttt{s}}(r, \upsilon, \upsilon') \d\upsilon'  -\sigma_{\texttt{s}}(r, \upsilon) {f}(r, \upsilon) =\sigma_{\texttt{s}}(r, \upsilon)\int_{V}[{f}(r, \upsilon') - {f}(r, \upsilon) ]\pi_{\texttt{s}}(r, \upsilon, \upsilon') \d\upsilon' 
\]
and also note that it takes the classical form of a difference operator. Finally ${\bF}$ is the generator action of a fission even in which a particle travelling with velocity $\upsilon$ at position $r$ is removed at rate $\sigma_{\texttt{f}}$ and replaced by an 
average number of particles $\pi_{\texttt{f}}(r,\upsilon,\upsilon')\d\upsilon'$ moving onwards from $r$ with velocity $\upsilon'$.

\smallskip

This leads us to the so called {\it backwards neutron transport equation} (which is also known as the {\it adjoint neutron transport equation}) given by 
\begin{align}
\frac{\partial}{\partial t}\psi_t(r, \upsilon) &=%Q(r,\upsilon, t)+ 
\upsilon\cdot\nabla\psi_t(r, \upsilon)  -\sigma(r, \upsilon)\psi_t(r, \upsilon)\notag\\
&\hspace{1cm}+ \sigma_{\texttt{s}}(r, \upsilon)\int_{V}\psi_t(r, \upsilon') \pi_{\texttt{s}}(r, \upsilon, \upsilon')\d\upsilon' + \sigma_{\texttt{f}}(r, \upsilon) \int_{V}\psi_t(r, \upsilon') \pi_{\texttt{f}}(r, \upsilon, \upsilon')\d\upsilon',
\label{bNTE}
\end{align}
with additional boundary conditions 
\begin{equation}
\left\{
\begin{array}{ll}
\psi_0(r, \upsilon) = g(r, \upsilon) &\text{ for }r\in D, \upsilon\in{V},
\\
&
\\
\psi_t(r, \upsilon) = 0& \text{ for } t \ge 0 \text{ and } r\in \partial D
\text{ if }\upsilon
\cdot{\bf n}_r>0.
\end{array}
\right.
\label{BC1}
\end{equation}
Similarly to previously, the second of these two conditions is often written $\psi_t|_{\partial D^+} = 0$, where $\partial D^+ : = \{(r,\upsilon)\in \partial D\times V :
\upsilon
\cdot{\bf n}_r>0 \}$.

\smallskip

%As we shall shortly see, it is the backwards form of the NTE and the way it is organised that is most appealing to us in terms of probabilistic representation and Monte Carlo methods. Historically, in its forward form, the 
 The NTE has played  a prominent role in real-world modelling and, for many years, has found a home in commercial software which is used in the nuclear safety industry. In particular, this is most prominent in the modelling and design of environments which are exposed to radioactive material, from nuclear reactor cores and hospital equipment, through to equipment used to irradiate produce that is sold in supermarkets, thereby prolonging its shelf-life. More recently, with the notion of human interplanetary space exploration becoming less of a sci-fi fantasy and more of a fast approaching  reality, an understanding of how long-lasting and compact nuclear power sources, for e.g. Moon or Mars bases has become increasingly important.

\smallskip 

Figure \ref{reactorcore} below  illustrates a typical geometrical model of a reactor core rod, cladding and outer shielding.\footnote{The authors are grateful to Prof. Paul Smith from Wood who has given us permission to use these images which were constructed with Wood nuclear software ANSWERS.} The structural design of such a reactor  can easily be stored as virtual environment (i.e. storing the coordinates of the different geometrical domains and the material properties in each domain) with around 150MB of data, on to which extensive  data libraries of numerical values for the respective quantities $ \sigma_{\texttt{s}},  \sigma_{\texttt{f}},  \pi_{\texttt{s}},  \pi_{\texttt{f}}$  can be mapped. (It is an otherwise little known fact that countries which are heavily invested in nuclear power, such as the UK, USA, France, China, etc., are all in possession of such numerical libraries of cross sections, which have been carefully built up over decades.)

\begin{figure}[h]
\includegraphics[width= 0.45\textwidth]{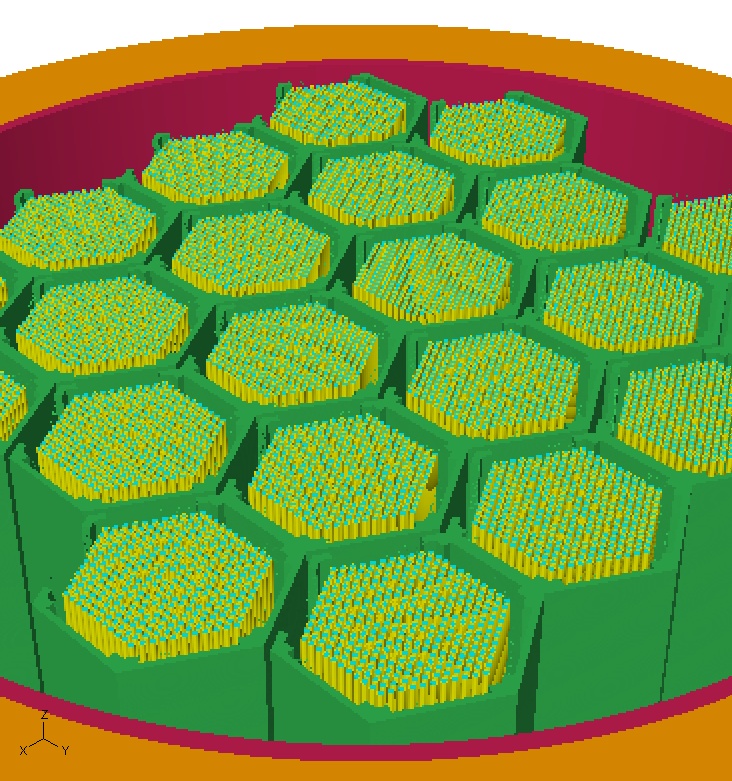}
$\mbox{ }$
\includegraphics[width= 0.45\textwidth]{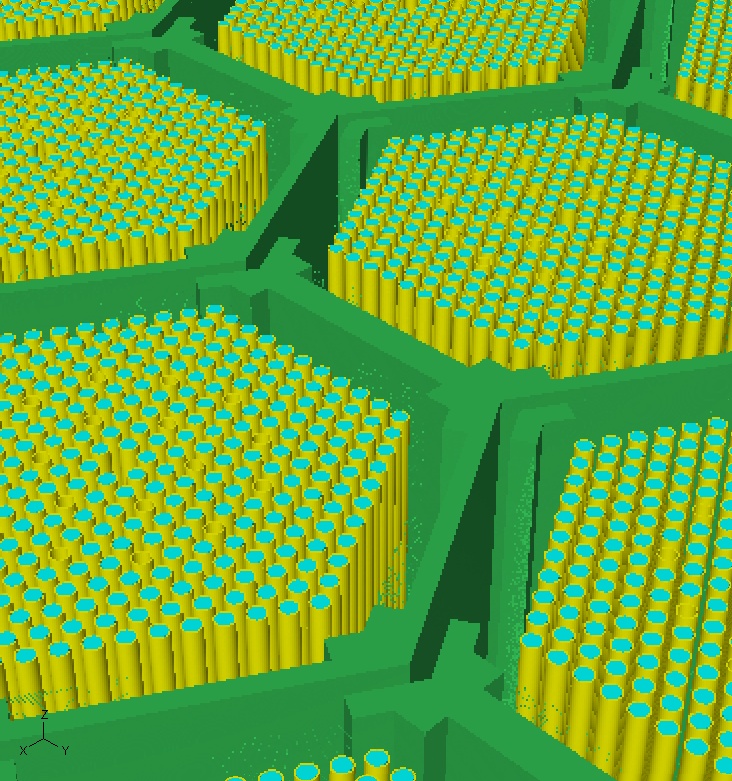}
\caption{\rm A virtual model of a nuclear reactor core with colour indicating the respective fissile properties of the virtual materials used. Uranium rods are arranged into hexagonal cells which are arranged within a larger containment casing.}
\label{reactorcore}
\end{figure}

\smallskip

One of the  principal ways in  which neutron flux is understood is to look for the leading eigenvalue and associated ground state eigenfunction. Roughly speaking, this means looking for an associated triple of eigenvalue $\lambda\in\mathbb{R}$,  non-negative right eigenfunction  $\varphi: {D}\times V\to[0,\infty)$ in $L_2(D\times V)$ satisfying $\varphi|_{\partial D^+} =0$ and  a non-negative left eigenfunction $\tilde\varphi$ on ${D}\times V$  in $L_2(D\times V)$ satisfying $\tilde\varphi|_{\partial D^-} = 0$ such that 
\[
\lambda\langle\varphi,  f
\rangle =\langle ( {\bT}+ {\bS} + {\bF})\varphi, f  \rangle \quad \text{ and } \quad\lambda \langle f, \tilde{\varphi} \rangle
 =\langle ( {\bT}+ {\bS} + {\bF})f, \tilde{\varphi} \rangle.
\]
As such, this introduces the notion of {\it fissile stability}, in particular in the case that $\lambda = 0$. This is naturally the desired scenario\footnote{Strictly speaking the reality is that, nuclear reactors are kept in a slightly supercritical state. The reason for this is that at criticality, as proved in \cite{SNTE2}, neutron activity will eventually die out.} for a nuclear reactor.
\smallskip

In the physics literature, it is thus often understood that, to leading order, the NTE \eqref{bNTE} is solved in the approximate sense
\begin{equation}
\psi_t(r,\upsilon) = {\rm e}^{\lambda t}\langle g, \tilde{\varphi}\rangle \varphi(r, \upsilon) + o({\rm e}^{\lambda t}), \qquad t\geq 0.
\label{sim}
\end{equation}

Note that  the scenario that $\lambda>0$ is obviously to be avoided  in practice as this would correspond to a set-up that could result in exponential growth in fission.

\smallskip

The approximation \eqref{sim} can be seen as a functional version of the Perron-Frobenius Theorem and  has given rise to a number of different numerical methods for estimating the value of the eigenvalue $\lambda$ as well as the eigenfunctions $\varphi$ and $\tilde{\varphi}$. One approach pertains to the discretisation of \eqref{NTE} followed by the use numerical analytic methods; see \cite{SG}. Another pertains to the previously alluded to identification of the solution to the NTE as the linear semigroup of a Markov branching process, which in turn implies Monte Carlo methods involving the simulation of the aforesaid branching process. Such methods are computationally expensive, as branching processes, being tree-like structures, are complex to simulate, e.g. from the point of view of parallelisation. 
In  related papers to this one, we will discuss a new Monte Carlo approach to the NTE based on some of the stochastic analysis we deal with in this article as well as in related work undertaken by the authors of this paper; see \cite{SNTE, SNTE2, MCNTE}.

\smallskip

The aim of this paper is manifold. First and foremost, we aim to reposition the theory  of the NTE into a contemporary probabilistic setting. We will do this by explaining a precise relationship between the NTE and a two different families of Markov processes via Feynman--Kac type formulae. Indeed, this article is one of a cluster of forthcoming pieces of work, which take a new and predominantly probabilistic point of view of the NTE; cf \cite{MCNTE, SNTE, SNTE2}. Next  we want  to introduce the notion of the (multi-species) NTE into the  literature, which generalises \eqref{NTE} by simultaneously modeling the flux of all species of particles and radiation involved in the process of nuclear fission. In doing so we will show that, just as in the classical setting, one may develop the notion of a lead eigenvalue and eigenfunction, which is an important part of describing fissile stability. As such, the current article is part review of existing theory and part presentation of new research results based on generalisation of existing results

\smallskip

Together with the accompanying papers \cite{MCNTE, SNTE, SNTE2}, we believe that the probabilistic perspective presented here, i.e. coupling the solutions to the  NTE with averaging procedures of certain Markov processes, opens up the possibility of  many questions that can be considered at depth in the arena of stochastic analysis and Monte Carlo algorithms, which are currently missing from the literature.  Indeed, returning to the kind of environments seen in Figure \ref{reactorcore}, there are many questions concerning how to analyse and numerically generate the leading eigenfunctions and eigenvalue to a reasonable  degree of precision. Such questions might include: {\it What is the connection of the eigendecomposition discussed in this paper and e.g. $R$-theory or the theory of general Harris recurrence for stochastic processes (cf. \cite{DMT, T3, T2})? }{\it How do different stochastic representations lead to different Monte Carlo simulations?}  {\it Based on stochastic representation how does one measure convergence of Monte Carlo algorithms? How strong can they be predicted to be? What kind of variance reduction techniques does stochastic representation suggest?} {\it Does the inclusion of multi-species models make estimation of the leading eigenvalue more accurate?}

\smallskip

\section{Organisation of the paper}  In the next section, we give a brief overview of the key mathematical literature for the NTE. (Note we do not stray beyond mathematical literature, as the physics and engineering literature is significantly more expansive.) Thereafter in Section \ref{MNTEsect}, we introduce the multi-species NTE (MNTE) and its rigorous formulation, existence, uniqueness and asymptotics in the setting of an abstract Cauchy problem. In particular, we show how the unique solution is identified as a $c_0$-semigroup in the appropriate $L_2$ space. In Section \ref{MNBPsect}, we introduce a spatial branching process that is constructed using the cross sections that appear in the  NTE to describe its stochastic evolution.  Here we introduce its expectation semigroup.  In Section \ref{RW}, we provide a second stochastic representation to the expectation semigroup introduced in the previous section via a classical method of the many-to-one formula.
\smallskip

Ideally, we would like to claim that the expectation semigroup discussed in Sections \ref{MNBPsect} and \ref{RW} agree with the $c_0$-semigroup introduced in Section \ref{MNTEsect} (its formal definition appearing just above Theorem \ref{ACPsoln}). This is particularly desired as  it  forms the foundations of how Monte Carlo simulation of the physical process can be used to develop a numerical solution to the MNTE. In Section \ref{consolidate}, we consolidate the two notions of semigroup and show that there is partial agreement in an appropriate sense. As far as we are aware, this is a point which is currently not clearly discussed in the literature. Finally we end the paper with a proof of one of the main theorems in Section \ref{MNBPsect} which provides the asymptotic behaviour of the solution to the MNTE in terms of the lead eigenfunction. This is a new result in the multi-species setting in the sense that we have allowed for multiple types of prompt emissions (both particles and radioactive emissions) rather than the case of only one type of prompt emission dealt with in \cite{M-K}; we also allow for multiple types of delayed emissions (that is, emissions that are pre-emptively held in an unstable radioactive isotope product from an  earlier fission event). Our proof nonetheless takes inspiration from the classical approach of \cite{DL6, M-K}, and remains loyal to the techniques there.

\section{Historical remarks on the mathematical treatment of the NTE} 
Classical texts such as Davison and Sykes \cite{DS} were once hailed as a bible of mathematical knowledge during the 1950s post {\it Manhattan Project} era when rapid technological advances lead to  the construction of the very first nuclear reactors driving commercial power stations. Around this time, there was an understanding of how to treat the NTE in special geometries and also by imposing an isotropic scattering and fission, see for example Lehner \cite{Leh} and Lehner and Wing \cite{LW1, LW2}. It was also understood quite early on that the natural way to cite the NTE is via the linear differential transport equation associated to a suitably defined operator on a Banach space. Moreover, it was understood that in this formulation, a spectral decomposition should play a key role in representing solutions, see e.g. J\"orgens \cite{J}, Pazy and Rabinowitz \cite{PR69}. %Following a key paper in the physics literature, Bell \cite{Bell}, the notion of citing solutions of an NTE in a suitably defined  Banach space is presented in Pazy and Rabinowitz \cite{PR69}. 
This notion was promoted by the work of R. Dautray and collaborators, who showed how  $c_0$-semigroups form a natural framework within which one may analyse the existence and uniqueness of solutions to the NTE; see  \cite{D} and \cite{DL6}. Moreover, a similar approach has also been pioneered by Mokhtar-Kharroubi \cite{M-K}.\smallskip

The probabilistic interpretation of the NTE was appreciated from the very first treatments of the NTE (see e.g. \cite{DS} and references therein as well as Bell \cite{Bell}). Indeed, the physical description of nuclear fission, when governed by basic principles, allowing for  additional randomness,  is nothing more than a branching Markov process. Numerous derivations of the NTE from this perspective can be found in the literature to various degrees of rigour; see e.g.  Bell \cite{Bell}, Mori et al. \cite{MWY}, Pazy and Rabinowitz, \cite{PR}, Lewins \cite{L} and P\'azsit and P\'al. \cite{PP}.
\smallskip

A more modern treatment of the probabilistic representation through Feynman-Kac expectation semigroups and the connection to the theory of Markov diffusions is found in Dautray et al. \cite{D}. A purely  probabilistic can be found in Lapeyre et al. \cite{LPS}. See also the accompanying papers to this one \cite{MCNTE, SNTE, SNTE2}.
\smallskip

We finish this section by noting that there is a body of literature that pertains to the numerical analysis of the NTE. Recent work in this field, including the notion of uncertainty quantification, can be found in e.g. \cite{MT, GPS, SG}. See also references therein.

\section{Multi-species (Backwards) Neutron Transport Equation}\label{MNTEsect} In the following discussion, rather than talk about typed particles, we prefer to say typed `emissions' as the different types correspond to particles, electromagnetic rays (e.g. gamma rays) and isotopes (which are considered to be carriers for delayed emissions).
Let us now introduce an advanced version of the NTE, which takes account of both  non-transmutation emissions as well as  transmutation emissions,  in particular, allowing for the inclusion of all types of emissions, prompt neutrons, delayed neutrons, alpha, beta and gamma emissions etc.  An important feature (and arguably a restriction) of our model is that only prompt neutrons can produce delayed emissions.
\smallskip

In order to keep track of the various emission types, we define the type space $I \coloneqq \{1, \dots, m\}$ for some $m \in \N$, ordered such that 
\begin{align*}
\text{type $1$ emissions:} & \text{ prompt neutrons (neutrons released immediately after fission)}\\
\text{types $2, \dots,\ell$ emissions:}& \text{ other prompt emissions (e.g. alpha, beta,  gamma emissions)}\\
\text{types $\ell+1, \dots, m$ emissions:} &\text{ isotopes (holding types/precursors) that hold delayed emissions.} 
\end{align*}

Finally, the set of admissible velocities for each of the types $i$ can be embedded within a common space $V = \{\upsilon\in \mathbb{R}^3: \upsilon_{\texttt{min}}\leq |\upsilon|\leq  \upsilon_{\texttt{max}}\}$, with $0 < \upsilon_{\texttt{min}} \le \upsilon_{\texttt{max}}< \infty$). We now consider the flux, $\psi_t(i,r, \upsilon) $ of type $i$ emissions through a given region $r\in D$  with velocity $\upsilon\in V$ at time $t\geq 0$. 
We are interested in the so called {\it multi-species neutron transport equation (MNTE)} which takes the form
\begin{align}
\frac{\partial}{\partial t}\psi_t(i,r, \upsilon)&= \upsilon\cdot\nabla
\psi_t(i,r, \upsilon)  -\sigma^i(r, \upsilon)\psi_t(i,r, \upsilon) \notag\\
&\hspace{1cm}
+ \sigma^i_{\texttt{s}}(r, \upsilon) \int_{V}\psi_t(i,r, \upsilon)\pi^i_{\texttt{s}}(r,  \upsilon, \upsilon')\d\upsilon'\notag\\ 
&\hspace{2cm}+ \sigma^i_{\texttt{f}}(r, \upsilon)\sum_{j = 1}^\ell\int_{V}\psi_t(j,r, \upsilon)  \pi^{i,j}_{\texttt{f}}(r, \upsilon, \upsilon')\d\upsilon' \notag\\
&\hspace{3cm} +\mathbf{1}_{(i=1)}\sigma^1_{\texttt{f}}(r, \upsilon)\sum_{j=\ell+1}^mm^j(r, \upsilon)\psi_t(j,r, \upsilon), \label{promptNTE}
\end{align}
%&\eqqcolon \mathcal{A}_i\Psi, \qquad 1\le i\le \ell \label{promptgen}
for prompt emissions $i = 1,\cdots, \ell$, whereas, in the case of delayed emissions, $i =\ell+1, \cdots, m$ satisfies
\begin{align}
\frac{\partial}{\partial t}\psi_t(i,r, \upsilon)& = -\lambda_i\psi_t(i,r, \upsilon) + \lambda_i\sum_{j=1}^\ell\int_{V}\psi_t(j,r, \upsilon)\pi_{\texttt{f}}^{i,j}(r, \upsilon, \upsilon')\d\upsilon' \label{delayNTE},
%&\eqqcolon \mathcal{A}_i\Psi, \qquad\ell+1\le i \le m, \label{delaygen}
\end{align}
which is of a simple form because it describes only how these emissions are held in a suspended state (no advection) before being converted back to prompt emissions. Similarly to before, have the following interpretation: 
\begin{align*}
\sigma^i_{\texttt{s}}(r, \upsilon) &: \text{ the rate at which scattering occurs for a type $i$ emission with incoming}\\
&\hspace{0.5cm} \text{velocity $\upsilon$,}\\
\sigma^i_{\texttt{f}}(r, \upsilon) &: \text{  the rate at which fission occurs for a type $i$ emission with incoming}\\
&\hspace{0.5cm}\text{velocity } \upsilon,\\
\sigma^i(r, \upsilon) &: \text{ the sum of the rates } \sigma^i_{\texttt{f}}+ \sigma^i_{\texttt{s}} \text{ and is known as the total cross section for a }\\
&\hspace{0.5cm}\text{type $i$ emission,}\\
\pi^i_{\texttt{s}}(r, \upsilon, \upsilon')\d\upsilon' &: \text{  the scattering yield at velocity  $\upsilon'$ from incoming velocity $\upsilon$ for a type $i$} \\
 &\hspace{0.5cm}\text{emission, satisfying }\textstyle{\int_V}\pi^i_{\texttt{s}}(r, \upsilon, \upsilon')\d\upsilon'=1,\\
 \pi^{i,j}_{\texttt{f}}(r, \upsilon, \upsilon')\d\upsilon' &:  \text{  the average type $j$ yield at velocity $\upsilon'$ from fission with incoming velocity}\\
 &\hspace{0.5cm}\mbox{$\upsilon$ for a type $i$ emission satisfying $\sum_{j = 1}^\ell 
\textstyle{\int_V}\pi^{i,j}_{\texttt{f}}(r, \upsilon, \upsilon')\d\upsilon' <\infty$,}\\
 m^j(r, \upsilon) &: \text{ the average type $j$ (unstable) isotope yield from a fission event due to a} \\
 &\hspace{0.5cm}\text{type 1 particle with incoming velocity $\upsilon$},\\
\lambda_i &: \text{ the decay rate for a type $i$ isotope.}
%\\
%Q(i,r,\upsilon)&: \text{ non-negative source term for emissions of type $i$. }
\end{align*}
There are a number of assumptions about the many cross sections that appear in the above equations that will remain in force throughout the remainder of this text. 
\begin{assumptions}\rm
All cross sections are non-negative, measurable and uniformly bounded from above. Moreover, all prompt emissions scatter and hence,  without loss of generality, we also assume that for for each $i=1,\cdots, \ell$, the terms $\sigma^i_{\texttt{s}}\pi^i_{\texttt{s}}$ are uniformly bounded away from the origin on $D\times V$. We need not assume that the cross sections $\sigma^i_{\texttt{f}}\pi^{i,j}_{\texttt{f}}$ are uniformly bounded away from the origin for $1\leq i,j\leq \ell$, with the exception of $i = 1$, for which it only makes sense that $\sigma^1_{\texttt{f}}m^j$ is uniformly bounded away from 0 for each $j = \ell+1, \cdots, m.$  Without loss of generality, we can assume that $0<\lambda_{\ell + 1}<\cdots <\lambda_m$.
\end{assumptions}

We also assume similar boundary conditions to the single-type case in the sense that emissions exiting the physical domain $D$ are killed. That is to say

\begin{equation}
\left\{
\begin{array}{ll}
\psi_0(i, r, \upsilon) = g(i, r, \upsilon) &\text{ for } 1\leq i\leq m, r\in D, \upsilon\in{V},
\\
&
\\
\psi_t(i,r, \upsilon) = 0& \text{ for }1\leq i\leq \ell, r\in \partial D
\text{ if }\upsilon
\cdot{\bf n}_r>0.
\end{array}
\right.
\label{MBC}
\end{equation}
For the second condition, we will write $\psi_t|_{\partial D^+} = 0$, where $\partial D^+ = \{(i, r,\upsilon) \in \{1,\cdots, \ell\}\times\partial D\times V:\upsilon\cdot{\bf n}_r>0\}$

\smallskip

Classical literature suggests that one can integrate delayed neutrons into the setting of the NTE by adding an inhomogeneity corresponding to the integral of incoming delayed neutrons from time $-\infty$ to the present; see e.g. \cite{DS}. A vectorial representation such as the one above can be found, however, in the work of  \cite{M-K}. There, only one category of prompt emissions are considered with multiple species of   delayed neutrons. 

\smallskip

As before, let us define the multi-species backward transport, scattering and fission operators as they appear in MNTE \eqref{promptNTE} and \eqref{delayNTE}, acting on $f\in \prod_{i=1}^m L_2(D\times V)$, so that, for $i = 1,\cdots m$,
\begin{equation*}
\left\{
\begin{array}{rl}
{\bT}_i{f}(\cdot, r, \upsilon)  &:= \mathbf{1}_{(1\leq i \leq  \ell)} \upsilon\cdot\nabla f(i, r, \upsilon) \\
&\\
{\bS}_i{f}(\cdot, r, \upsilon)  &:=\mathbf{1}_{(1\leq i \leq  \ell)}\int_{V}
[f(i, r, \upsilon') - f(i, r, \upsilon)]\sigma^i_{\texttt{s}}(r, \upsilon) \pi^i_{\texttt{s}}(r,  \upsilon, \upsilon')\d\upsilon'  \\
&\\
{\bF}_i{f}(\cdot, r, \upsilon) &: =   \mathbf{1}_{(1\leq i\leq \ell)}\left(\displaystyle\sum_{j = 1}^\ell\int_{V}f(j, r, \upsilon')\sigma^i_{\texttt{f}}(r, \upsilon)  \pi^{i,j}_{\texttt{f}}(r, \upsilon, \upsilon')\d\upsilon' - \sigma^i_{\texttt{f}}(r, \upsilon)f(i, r, \upsilon')\right) \\
&\hspace{1cm}+\mathbf{1}_{(i=1)}\displaystyle\sum_{j=\ell+1}^m\sigma^i_{\texttt{f}}(r, \upsilon)m^j(r, \upsilon) f(j, r, \upsilon)\\
&
\hspace{2cm}+\mathbf{1}_{(\ell+1\leq i\leq m)}\left(\lambda_i\displaystyle\sum_{j=1}^\ell\int_{V} f(j, r, \upsilon')\pi_{\texttt{f}}^{i,j}(r, \upsilon, \upsilon')\d\upsilon'  -\lambda_if(i, r, \upsilon) \right),
\end{array}
\right.
\label{Mbackwards_operators}
\end{equation*}
with zero action otherwise.
\smallskip

It is not often that  MNTE is stated as above in \eqref{promptNTE} and \eqref{delayNTE} in existing literature; see e.g.  \cite{M-K} for presentation of the NTE in a similar vectorial format, which allows for only one category of prompt neutrons.
\smallskip

The requirement that all cross sections are uniformly bounded is by far not the weakest assumption we can make (see e.g. Chapter XXI of \cite{DL6}). %\smallskip
% Nonetheless, we will later discuss a variant of the MNTE which deals with the so called {\it shielding} problem, in which there is no fission (only scattering), but one is interested in understanding how radioactive material from a source term $Q$ radiates the boundary $\partial D$ which is typically where human exposure will occur. 
\smallskip

The precise mathematical sense in which we must understand solutions to the coupled system \eqref{promptNTE} and \eqref{delayNTE} needs some discussion before we can proceed. 
To this end, we shall first introduce some notational conventions.

\smallskip

As alluded to above, we are interested in an vector space of functions, written as the column vector $g(\cdot) = (g(1,\cdot), \dots, g(m,\cdot))^{\texttt T}$, whose entries   $g(i,\cdot): D\times  V\to[0,\infty)$, for each $i  =1,\cdots, m$. More precisely we are interested in functions $f\in \prod_{j= 1}^m L_2({D}\times V)$, which is easily verified to be itself an $L_2$ space with inner product given by 
\begin{equation}
\langle f, g \rangle = \sum_{i = 1}^m (f, g)_i,
\quad\text{ 
where 
 }\quad
(f,g)_i = \int_{D\times V} f(i, r, \upsilon) g(i, r, \upsilon) \d r\d \upsilon.
\label{innerprods}
\end{equation}
%Later, we will abuse this notation for measures  $\mu = (\mu_1, \cdots\mu_m)$ on $\prod_{i = 1}^{m} (D\times V)$  and write 
%\[
%\langle f, \mu\rangle= \sum_{i =1}^m (f, \mu)_i,
%\quad\text{ 
%where 
% }\quad
%(f,\mu)_i = \int_{D\times V} f(i, r, \upsilon) \mu(i, \d r, \d \upsilon).
%\]
Generally speaking, for a scalar quantity which is indexed by $i$, say $a(i)$, when written without the index, we will understand it to be a column vector. Sometimes we will want to put $f\in \prod_{j= 1}^m L_2({D}\times V)$ on the diagonal of an $m\times m$ matrix, in which case we will write $\diagonal(f)$. For our transport, scattering and fission operators, we will understand $\bT = \diagonal(
\bT_1, \cdots, \bT_m)$, however,  we will understand  $\bF$ to be the matrix acting on vectors $f\in \prod_{j= 1}^m L_2({D}\times V)$, with  $i,j$-th entry given by 
\begin{align*}
{\bF}_{i,j}{f}(j, r, \upsilon) &: =   \mathbf{1}_{(1\leq i,j\leq \ell)}\left(\displaystyle \int_{V}f(j, r, \upsilon')\sigma^i_{\texttt{f}}(r, \upsilon)  \pi^{i,j}_{\texttt{f}}(r, \upsilon, \upsilon')\d\upsilon' - \mathbf{1}_{(i = j)} \sigma^i_{\texttt{f}}(r, \upsilon)f(i, r, \upsilon')\right) \\
&\hspace{1cm}+\mathbf{1}_{(i=1, \ell + 1\leq j\leq m)} \sigma^i_{\texttt{f}}(r, \upsilon)m^j(r, \upsilon) f(j, r, \upsilon)\\
&
\hspace{2cm}+\mathbf{1}_{(\ell+1\leq i\leq m, 1\leq j\leq \ell)}\left(\lambda_i\displaystyle \int_{V} f(j, r, \upsilon')\pi_{\texttt{f}}^{i,j}(r, \upsilon, \upsilon')\d\upsilon'  -\mathbf{1}_{(i = j)} \lambda_if(i, r, \upsilon) \right).
\end{align*}
% functions as the operator $\bF_i$ on $f\in \prod_{j= 1}^m L_2({D}\times V)$. 
%\smallskip
The operator $\bS$ can be handled similarly. 

\smallskip

We are fundamentally interested in a {\it classical solution} to the so-called (initial-value) {\it abstract Cauchy problem} (ACP)
\begin{equation}
\left\{
\begin{array}{rl}
\dfrac{\partial}{\partial t}u_t &= ({\bT} + {\bS}+{\bF})u_t
\\
u_0& = g 
\end{array}
\right.
\label{ACP}
\end{equation}
where $u_t$ is treated as a column vector belonging  to   the space $\prod_{j= 1}^m L_2({D}\times V)$, for $t\geq 0$.   Specifically this means that, $(u_t, t\geq 0)$ is continuously differentiable in this space. In other words,   there exists a $\dot{\psi}_t\in \prod_{j= 1}^m L_2({D}\times V)$, which is time-continuous in $\prod_{j= 1}^m L_2({D}\times V)$ with respect to $\norm{\cdot}_2$, such that  $\lim_{h\to 0}h^{-1}\norm{
u_{t+h} - u_t}_2= \dot{\psi}_t$ for all $t\geq 0$.

\smallskip

The theory of $c_0$-semigroups gives us a straightforward approach to describing the unique solution to \eqref{ACP}.
Recall that a $c_0$-semigroup  also goes by the name of a strongly continuous semigroup and, in the present context, this means a family of time-indexed operators, $(\texttt{V}_t, t\geq0)$, on $\prod_{j= 1}^m L_2({D}\times V)$ with the properties that \begin{itemize}
\item[(i)] $\texttt{V}_0 = {\rm Id}$, 
\item[(ii)] $\texttt{V}_{t+s}[g] = \texttt{V}_t[\texttt{V}_s[g]]$, for all $s, t\geq 0$, $g\in \prod_{j= 1}^m L_2({D}\times V)$ and \item[(iii)] for all $g\in \prod_{j= 1}^m L_2({D}\times V)$, $\lim_{h\to0}\norm{\texttt{V}_h[g] - g}=0$.
\end{itemize}

To see how $c_0$-semigroups relate to \eqref{ACP}, let us  define ${\bA}: = {\bT} + {\bS}+{\bF}$ and define $(\texttt{V}_t[g], t\geq 0)$ the semigroup generated by ${\bA}$ via
\begin{equation}
\label{V}
\texttt{V}_t[g] := \exp(t {\bA})g, \qquad g\in \prod_{j= 1}^m L_2({D}\times V).
\end{equation} Note that 
\[
{\rm Dom}({\bA}): = \left\{g\in \prod_{j= 1}^m L_2({D}\times V) : \lim_{h\to 0}h^{-1}\norm{\texttt{V}_h[g] - g}_2 \text{ exists}
\right\}
\]
is the  domain of $\bA$ and standard theory  (cf. \cite{EN}) tells us that  $\texttt{V}_t[g]\in {\rm Dom}({\bA})$ for all $t\geq 0$, with $g$ as above.
Proposition II.6.2 of \cite{EN} now gives us the relevance to \eqref{ACP}.

\begin{theorem}\label{ACPsoln} Let $({\ebA}, \text{\rm Dom}({\ebA}))$ be the generator of a $c_0$-semigroup
$(\emph{\texttt{V}}_t, t\geq 0)$. If $g\in {\rm Dom}({\ebA})$, then  $u_t: = \emph{\texttt{V}}_t[g]$ is a representation of the unique classical solution of \eqref{ACP}.
\end{theorem}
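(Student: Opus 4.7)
The proof follows the standard $c_0$-semigroup argument (see e.g.\ Proposition II.6.2 of \cite{EN}), which splits naturally into existence and uniqueness steps.

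For existence, the plan is to differentiate the map $t\mapsto u_t = \texttt{V}_t[g]$ directly. Exploiting the semigroup property (ii), the difference quotient admits two equivalent forms:
\[
\frac{\texttt{V}_{t+h}[g] - \texttt{V}_t[g]}{h} \;=\; \texttt{V}_t\!\left[\frac{\texttt{V}_h[g] - g}{h}\right] \;=\; \frac{\texttt{V}_h[\texttt{V}_t[g]] - \texttt{V}_t[g]}{h}.
\]
Since $g\in \text{Dom}(\bA)$, the quotient $h^{-1}(\texttt{V}_h[g]-g)$ converges to $\bA g$ in $\prod_{j=1}^m L_2(D\times V)$; strong continuity and boundedness of $\texttt{V}_t$ then force the middle expression to converge to $\texttt{V}_t[\bA g]$. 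This identifies the right-derivative of $u_t$. Simultaneously, convergence of the right-hand expression shows $\texttt{V}_t[g]\in\text{Dom}(\bA)$ with $\bA\,\texttt{V}_t[g] = \texttt{V}_t[\bA g]$, i.e.\ $\texttt{V}_t$ and $\bA$ commute on $\text{Dom}(\bA)$. A parallel argument with backward difference quotients for $t>0$, together with strong continuity of $s\mapsto\texttt{V}_s[\bA g]$, upgrades this to two-sided continuous differentiability, and $u_0 = g$ follows from (i).

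For uniqueness, I would fix $T>0$, let $v_t$ be any classical solution of \eqref{ACP}, and consider the auxiliary curve $w_s := \texttt{V}_{T-s}[v_s]$ for $s\in[0,T]$. Using a product rule in $\prod_{j=1}^m L_2(D\times V)$, the commutation relation established above, and the hypothesis that $v$ solves \eqref{ACP}, one finds
\[
\frac{\dd}{\dd s}w_s \;=\; -\bA\,\texttt{V}_{T-s}[v_s] \,+\, \texttt{V}_{T-s}[\bA v_s] \;=\; 0,
\]
so $w$ is constant on $[0,T]$, and hence $v_T = w_T = w_0 = \texttt{V}_T[g] = u_T$.

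The main obstacle I expect is a careful justification of the Banach-space chain rule used in differentiating $w_s$. Both factors $\texttt{V}_{T-s}$ and $v_s$ depend on $s$, so one must separately control the derivative in the semigroup slot (which exists only because $v_s$ lies in $\text{Dom}(\bA)$, a condition built into the notion of classical solution) and commute the bounded operator $\texttt{V}_{T-s}$ with the time-derivative of $v_s$. Both steps rely on the strong continuity of $(\texttt{V}_t)_{t\geq 0}$ together with the continuity of $s\mapsto \dot v_s$ in $\prod_{j=1}^m L_2(D\times V)$ that is part of the definition of a classical solution.
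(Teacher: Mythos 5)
Your proposal is correct and coincides with the proof the paper itself relies on: the paper gives no independent argument but simply invokes Proposition II.6.2 of \cite{EN}, and your two steps --- differentiating the orbit $t\mapsto \texttt{V}_t[g]$ via the two forms of the difference quotient to get existence and the commutation $\bA\texttt{V}_t[g]=\texttt{V}_t[\bA g]$, then showing $s\mapsto \texttt{V}_{T-s}[v_s]$ is constant to get uniqueness --- are exactly the content of that cited result. Nothing further is needed.
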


The reader may well have wondered where the second boundary condition in \eqref{MBC} has gone in the above formulation. This is a matter of interpretation of  $({\bT}, {\rm Dom}({\bT}))$, and hence the generator $({\bA}, {\rm Dom}({\bA}))$, as we now discuss. 
\smallskip

%For convenience, let us write 
%\[
%\Sigma(i, r,\upsilon) =\mathbf{1}_{(1\leq i \leq , \ell)}\sigma^i(r, \upsilon)+\mathbf{1}_{( \ell+1\leq i \leq m)}\lambda_i\geq 0,
%\]
%For $ g\in L_2 (\prod_{i=1}^m({D}\times V))$ satisfying \begin{equation}
% g (i, r, \upsilon)= 0 \text{ for }1\leq i\leq m, r\in \partial D
%\text{ if }\upsilon
%\cdot{\bf n}_r>0,
%\label{gMBC}
%\end{equation} 
We are interested in the advection semigroup with exponential killing and killing on the boundary of $D$, 
\begin{equation}
\texttt{U}_t[g](i, r,\upsilon) = g(i,  r+\upsilon t, \upsilon)\mathbf{1}_{(t<\kappa^D_{r,\upsilon})}, \qquad  i = 1,\cdots, \ell\text{ and }t\geq 0.
\label{Udef}
\end{equation}
where
 \begin{equation}
\kappa_{r,\upsilon}^{D} := \inf\{t>0 : r+\upsilon t\not\in D\}.
\label{deterministic}
\end{equation}
%When $g$ respects the boundary conditions \eqref{MBC}, we note that this notation is equivalent to defining
%$
%\texttt{U}_s[g](r,\upsilon) = \texttt{diag}(g(\cdot, r+(s\wedge \kappa^D_{r,\upsilon}) \upsilon, \upsilon))$, $s\geq 0$.
% To see why, note that $ r+\upsilon \kappa_{r,\upsilon}^{D}$ is a point on $\partial D$ where the direction of $\upsilon$ points to the exterior of $D$, so that $\upsilon\cdot{\bf n}_r>0$, which forces $g(r+\upsilon \kappa_{r,\upsilon}^{D},\upsilon) = 0$ from the requirement on the boundary conditions.
% 
% \smallskip
% 
  In essence, $(\texttt{U}_s,s\geq 0)$ is the semigroup of the process which moves from a point of issue $r$ in a straight line with velocity $\upsilon$ and which is killed on hitting $\partial D$. 
 To see why $\texttt{U}: = (\texttt{U}_s,s\geq 0)$ has the semigroup property, note that 
\[
\kappa_{r+\upsilon{s}, \upsilon} 
= \inf\{t>0 : r+\upsilon (t+{s}) \not\in D\} = (\kappa_{r, \upsilon}-{s})\vee0,
\]
so that $t<\kappa_{r+\upsilon{s}, \upsilon} $ if and only if  $t+{s} < \kappa^D_{r, \upsilon}$. Hence for any $g\in \prod_{i= 1}^m L_2({D}\times V)$ satisfying  the boundary conditions \eqref{MBC}, we have from the definition \eqref{Udef}, for $i =1,\cdots, \ell$, $r\in D$, $\upsilon\in V$,
%\[
%0\leq\kappa_{r, \upsilon}- \kappa_{r+\upsilon{s}, \upsilon} ={s}\wedge \kappa_{r, \upsilon}.
%\]
\begin{align*}
{\texttt U}_{s} [{\texttt U}_t[g] ](i,r, \upsilon) &= 
{\texttt U}_t[g](i, r+\upsilon{s}, \upsilon)\mathbf{1}_{({s}< \kappa^D_{r, \upsilon} )}\notag\\
&=g(i, r+\upsilon(t+{s}), \upsilon)\mathbf{1}_{(t<\kappa^D_{r+\upsilon{s}, \upsilon})}
\mathbf{1}_{({s}< \kappa^D_{r, \upsilon} )}\notag\\
&=\texttt{U}_{t+{s}}[g](i,r, \upsilon) 
\end{align*}

It is a straightforward exercise, see e.g. Theorem 2 in Chapter XXI of \cite{DL6}, to  show that $\texttt{U}$ is a $c_0$-semigroup with generator 
$
{\bT}.
$
Its domain satisfies 
\begin{align}
{\rm Dom}(\bT) &= \prod_{i = 1}^\ell {\rm Dom}(\bT_i)\times \prod_{i =\ell+1}^m L_2(D\times V), \text{ where}\notag\\
{\rm  Dom}({\bT}_i)
=&\Bigg\{g\in L_2({D}\times V) : 
\upsilon\cdot\nabla g \in  L_2({D}\times V)\text{ and }g|_{\partial D^+} =0
%\\
%&\hspace{4cm}\text{ and } g(i,r,\upsilon) = 0  \text{  if  } \upsilon\cdot{\rm n}_r >0, \, i = 1,\cdots, m
\Bigg\}.
\label{domT}
\end{align}
Here, by $\upsilon\cdot\nabla g\in L_2({D}\times V)$ we mean that $\upsilon\cdot\nabla g$ exists in the distributional sense  and is integrable in the space $  L_2({D}\times V)$. 
%Notably, we have for $t\geq 0$ and {\color{red}$g\in \prod_{i=1}^mL_2(D\times V),$ \footnote{do you mean the product up to m?}}
%\[
%{\texttt U}_{t} [g] = g + \int_0^t \mathbf{1}_{({s}< \kappa^D_{r, \upsilon} )}\upsilon\cdot\nabla g \, \d s.
%\]
% 
\smallskip

 The domain of ${\bA}$ can be no larger than Dom$({\bT})$. 
It turns out however that Dom$({\bA})=$Dom$({\bT})$. To see why, we need only consider that the linear operators of the form 
\[
\texttt{K}_i f(i,r,\upsilon) :=\alpha^i(r,\upsilon)\sum_{j=1}^m\int_{V}  f(j, r, \upsilon') \pi^{i,j}(r,\upsilon,\upsilon')\d \upsilon, 
\]
are continuous mappings from $\prod_{i= 1}^m L_2({D}\times V)$ into itself, where $\alpha$ and $\pi^{i,j}$ are non-negative, measurable and uniformly bounded. The proof is a straightforward exercise which uses the Cauchy-Schwarz inequality; see for example Lemma XXI.1 of \cite{DL6}. It follows that Dom$({\bS})$  and Dom$({\bF})$ are both equal to $\prod_{i= 1}^m L_2({D}\times V)$ and, hence,  Dom$({\bA})$ and Dom$({\bT})$ agree.

\smallskip

Note there is no particular necessity to put solutions in an $L_2$ space, one might equally work with the space  $\prod_{i = 1}^m L_p (D\times V)$, for $p\in(1,\infty)$. As the reader might suspect, solutions of the backwards equation in an $L_p$ space comes hand in hand with a similarly formulated solution to the forward equation in the conjugate space $\prod_{i = 1}^m L_q(D\times V)$, where $q^{-1}+ p^{-1} =1$.
See for example Chapter XXI of \cite{DL6} or \cite{M-K}. The reader will note the exclusion of the $L_1$ and $L_\infty$ conjugacy. The reason for the exclusion boils down to the cumbersome nature of the advection operator ${\bT} = \upsilon\cdot\nabla$. Quite simply it is not possible to verify the strong continuity property of the advection semigroup
\begin{equation}
\texttt{U}_t[g](i, r,\upsilon) = g(i,  r+\upsilon t, \upsilon)\mathbf{1}_{(t<\kappa^D_{r,\upsilon})}, \qquad t\geq 0.
\label{adv}
\end{equation}
where $
\kappa_{r,\upsilon}^{D} := \inf\{t>0 : r+\upsilon t\not\in D\}.
$
Hence we cannot give a meaning to $ \upsilon\cdot\nabla$ as a $c_0$-semigroup on $L_\infty(D\times V)$. This is unfortunate as the latter is the more natural setting for probabilistic interpretation of solutions to the ACP. 
Having said that, the backwards scattering and fission operators, respectively ${\bS}$ and ${\bF}$, are well defined on all $\prod_{i = 1}^mL_p(D\times V)$ spaces for $p\in[1,\infty]$.
\smallskip

One of our main results will be to establish the asymptotic \eqref{sim} but now in the current setting. Recall that we have assumed that $D\subseteq\mathbb{R}^3$ is a smooth open pathwise connected bounded domain of concern such that $\partial D$ has zero Lebesgue measure. 

\begin{theorem}\label{leadingeval}
Let $D$ be convex. We assume the following irreducibility conditions.
For each $i,j \in \{1, \dots, \ell\}$ assume that each of the cross sections $\sigma_{\emph{\texttt{f}}}^{i}(r, \upsilon)\pi_{\emph{\texttt{f}}}^{i,j}(r, \upsilon, \upsilon')$, $\sigma^i_{\emph{\texttt{f}}}(r,\upsilon)m^j(r,\upsilon)$ and $\sigma_{\emph{\texttt{s}}}^{i}(r, \upsilon)\pi_{\emph{\texttt{s}}}^{i}(r, \upsilon, \upsilon')$  are piece-wise continuous\footnote{A function is piecewise continuous if its domain can be divided into an exhaustive finite partition (e.g. polytopes) such that there is continuity in each element  of the partition. This is precisely how cross sections are stored in numerical libraries for modelling of nuclear reactor cores.}  on $\bar{D}\times V\times V$ and  there exists $k = k_{i,j}\in\{1, \dots, \ell\}$ such that
\begin{equation}
 \sigma_{\emph{\texttt{f}}}^i(r,\upsilon)\pi_{\emph{\texttt{f}}}^{i,k}(r, \upsilon, \upsilon')
>0 \text{ on } D \times V \times V
\label{irred1}
\end{equation}
and
\begin{equation}
 \sigma_{\emph{\texttt{f}}}^k(r,\upsilon)\pi_{\emph{\texttt{f}}}^{k,j}(r, \upsilon, \upsilon')>0 \text{ on } D \times V \times V.
\label{irred2}
\end{equation}
Then, 
\begin{itemize}
\item[(i)] the neutron transport operator $\ebA$ has a simple and isolated  eigenvalue $\lambda_c > -\lambda_{\ell +1}$, which is leading in the sense that $\lambda_c = \sup\{{\rm Re}(\lambda): \lambda \text{ is an eigenvalue of }\ebA\}$ and which has  corresponding non-negative right and left eigenfunctions in $ \prod_{i=1}^mL_2(D\times V)$, $\varphi$ and $\tilde\varphi$ respectively  and
\item[(ii)] there exists an $\varepsilon>0$ such that, as $t\to\infty$,
\begin{equation}
\norm{
{\rm e}^{-\lambda_c t}{\emph{\texttt{V}}}_t[f] 
-\langle f, \tilde{\varphi} \rangle \varphi }_2 = O({\rm e}^{-\varepsilon t}),
\end{equation}
for all $f\in \prod_{i=1}^mL_2(D\times V)$, where $({\emph{\texttt{V}}}_t, t\geq 0)$ is defined in \eqref{V}.
To give a precise value for $\varepsilon$, suppose we enumerate the eigenvalues of $\ebA$ in decreasing order by the set $\{\lambda^{(1)}, \cdots, \lambda^{(n)}\}$ (noting from earlier that we have at least $\lambda^{(1)}= \lambda_c$). Then   $\lambda^{(n)}> -\lambda_{\ell + 1}$ and we can take any $\varepsilon$ such that $\varepsilon<\lambda_c- (\lambda^{(2)}\vee(-\lambda_{\ell+1} ))$ where we understand $\lambda^{(2)} = -\infty$ if $n = 1$. 
\end{itemize}
\end{theorem}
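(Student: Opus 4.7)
The plan is to invoke a Perron--Frobenius style argument for the positive $c_0$-semigroup $(\texttt{V}_t,t\ge 0)$ generated by $\bA = \bT + \bS + \bF$ on $\prod_{i=1}^m L_2(D\times V)$, in the style of \cite{DL6, M-K} but adapted to the multi-species setting. First I would rewrite $\bA$ by absorbing the pointwise loss terms $-\sigma^i_{\texttt{s}}f$, $-\sigma^i_{\texttt{f}}f$ and $-\lambda_i f$ into an enlarged transport operator $\bT'$, so that $\bA = \bT' + \bS_+ + \bF_+$ where $\bT'$ generates an explicit positive semigroup $\texttt{U}'$ (the advection semigroup with exponential killing plus pure decay on the isotope coordinates) and the remaining operators $\bS_+,\bF_+$ are bounded and positive. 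The Dyson--Phillips expansion then exhibits $\texttt{V}_t$ as a norm-convergent sum of positive operators, which immediately gives positivity of the semigroup. A key elementary observation, which I would establish at the start, is that on the prompt coordinates $i\le\ell$ the trajectories $r+\upsilon t$ exit the bounded convex domain $D$ within the finite deterministic time $\mathrm{diam}(D)/\upsilon_{\texttt{min}}$, while on the isotope coordinates $\bT'$ acts as multiplication by $-\lambda_i$; together these yield the growth bound $\omega(\texttt{U}') = -\lambda_{\ell+1}$.

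The central technical step is to show that the essential growth bound $\omega_{\mathrm{ess}}(\texttt{V}) = \omega(\texttt{U}') = -\lambda_{\ell+1}$. To this end I would apply a Vidav--Jörgens--Voigt--Weis type theorem: it suffices to prove that a sufficiently high iterate of the Dyson--Phillips remainder, namely the term involving $(\bS_+ + \bF_+)$ convolved with $\texttt{U}'$ a number of times, is compact on $\prod_{i=1}^m L_2(D\times V)$. Compactness comes from the kernel structure of the scattering/fission operators (bounded kernels integrated against bounded velocity space) combined with the averaging effect of the advection semigroup in the space variable; the argument is the standard one reproduced, for example, in Chapter XXI of \cite{DL6}. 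The coupling to the delayed coordinates introduces a new wrinkle, but because the isotope coordinates feed back into type 1 only through the bounded kernel $\pi_{\texttt{f}}^{i,j}$ with pure exponential decay in the meantime, the same compactness argument applies provided one iterates the Dyson--Phillips series enough times to pick up at least one prompt--to--prompt transition.

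Once $\omega_{\mathrm{ess}}(\texttt{V})=-\lambda_{\ell+1}$ is in hand, the spectrum of $\bA$ in the half-plane $\{\Re\lambda>-\lambda_{\ell+1}\}$ consists of finitely many isolated eigenvalues of finite algebraic multiplicity. To identify the leading one and show it is simple with strictly positive eigenfunctions, I would verify that $(\texttt{V}_t)_{t\ge 0}$ is irreducible on $\prod_{i=1}^m L_2(D\times V)$ and then invoke the Perron--Frobenius theorem for irreducible positive $c_0$-semigroups (see e.g.\ Nagel and coauthors). This is where the irreducibility hypotheses \eqref{irred1}--\eqref{irred2} and convexity of $D$ enter: they guarantee that starting from any type $i$ one can reach any other type $j$ through a bounded chain of fission--scattering--transport events whose joint kernel is strictly positive on $D\times V$, which is precisely what is needed to rule out invariant band ideals. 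Combined with compactness, this yields a simple isolated eigenvalue $\lambda_c=s(\bA)$ strictly larger than $-\lambda_{\ell+1}$, with strictly positive left and right eigenfunctions $\tilde\varphi,\varphi$, proving part (i).

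For part (ii), the spectral decomposition of $\bA$ in the region $\{\Re\lambda > -\lambda_{\ell+1}\}$ gives a rank-one projection $P[f] = \langle f,\tilde\varphi\rangle\varphi$ associated to $\lambda_c$, and the complementary semigroup has growth bound bounded above by $\max\{\Re\lambda^{(2)},-\lambda_{\ell+1}\}$. A standard resolvent/contour estimate then yields
\[
\bigl\| \mathrm{e}^{-\lambda_c t}\texttt{V}_t[f] - \langle f,\tilde\varphi\rangle\varphi \bigr\|_2 = O\bigl(\mathrm{e}^{-\varepsilon t}\bigr)
\]
for any $\varepsilon<\lambda_c-\max\{\Re\lambda^{(2)},-\lambda_{\ell+1}\}$, which is exactly the claimed rate. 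The main obstacle I anticipate is the compactness step underpinning the essential-spectrum computation, since the multi-species coupling between prompt emissions and holding-type isotopes has to be handled carefully; absent any smoothing on the isotope coordinates, one must exploit that every closed cycle involving a delayed emission must pass through a prompt-neutron fission event whose associated integral kernel does provide the necessary compactness.
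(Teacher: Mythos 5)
Your proposal follows the other classical route through this material. The paper reduces the eigenvalue problem to the one-parameter family of compact operators $(\lambda \texttt{I}_\ell-\T)^{-1}\K^\circ(\lambda)$, $\lambda>-\lambda_{\ell+1}$, applies Krein--Rutman and de Pagter for each fixed $\lambda$ (Proposition \ref{intereval}), and then runs an intermediate-value argument in $\lambda$ to locate $\lambda_c$ where the spectral radius equals one; part (ii) follows from Proposition \ref{discretespectra} and the finite spectral expansion. You instead work at the level of the semigroup: Dyson--Phillips positivity, an essential growth bound $\omega_{\mathrm{ess}}(\texttt{V})=-\lambda_{\ell+1}$ obtained from compactness of an iterated remainder, and then Perron--Frobenius theory for irreducible positive $c_0$-semigroups. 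Both routes appear in \cite{DL6, M-K} and both can be made to work here; your observation that every cycle through a delayed coordinate must pass through a prompt fission event, which is where compactness is harvested, is exactly the right structural point for the multi-species coupling, and it mirrors the role of the convexity hypothesis and the regularity of the kernels in the paper's Step 1 of Proposition \ref{intereval}.

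There is, however, one genuine gap. The Perron--Frobenius machinery for positive irreducible semigroups only produces a simple leading eigenvalue with positive eigenfunctions once you know that the spectrum of $\bA$ in the half-plane $\{\Re\lambda>-\lambda_{\ell+1}\}$ is non-empty, i.e.\ that $s(\bA)>\omega_{\mathrm{ess}}(\texttt{V})=-\lambda_{\ell+1}$. If $s(\bA)\le-\lambda_{\ell+1}$, that half-plane contains no eigenvalues and your argument yields nothing; the strict inequality is not automatic but is a quantitative consequence of the delayed-emission coupling. You assert ``a simple isolated eigenvalue $\lambda_c=s(\bA)$ strictly larger than $-\lambda_{\ell+1}$'' without establishing it. The paper devotes Step 1 of the proof of part (i) precisely to this point: the characteristic operator contains the term $\M(\lambda\texttt{I}_{m-\ell}+\Lambda)^{-1}\K_\circ$, whose $(\lambda+\lambda_{\ell+1})^{-1}$ singularity, combined with the standing assumption that $\sigma^1_{\texttt{f}}m^{\ell+1}$ is bounded away from zero and a de Pagter lower bound on the spectral radius of $(\lambda-\bT_1-\sigma^1)^{-1}\sigma^1_{\texttt{f}}m^{\ell+1}(\K_\circ)_{1,\ell+1}$, forces $r\big((\lambda\texttt{I}_\ell-\T)^{-1}\K^\circ(\lambda)\big)\to\infty$ as $\lambda\downarrow-\lambda_{\ell+1}$, while this spectral radius stays below $1$ as $\lambda\to\infty$; continuity then yields $\lambda_c>-\lambda_{\ell+1}$. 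Some analogue of this lower bound (a concrete witness that the semigroup grows strictly faster than $\mathrm{e}^{-\lambda_{\ell+1}t}$) must appear in your argument before the Perron--Frobenius theorem can be invoked.
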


\begin{remark}\rm 
It could be argued that the assumptions in the above theorem rule out the possibility that we may, for example, include alpha  or beta emissions emissions in the model for that particular conclusion. Whilst alpha and beta emissions  may scatter, they are not energetic enough to cause fission. The irreducibility conditions \eqref{irred1} and \eqref{irred2} would thus fail. On the other hand, it is also known that when such particles are energetic enough, they can draw gamma radiation or positrons out of nuclei when passing in close proximity. If the latter are sufficiently energetic, then they  can induce fission.
\end{remark}

\section{Multi-species neutron branching process}\label{MNBPsect} Heuristically speaking, \eqref{ACP} can be thought of as  being closely related to the expectation  semigroup of a Markov branching process, or {\it Multi-species nuclear branching process (MNBP) as we shall call it}, whose infinitesimal generator is ${\bT} + {\bS}+{\bF}$. 
Consider the system of typed emissions whose configurations in $D\times V$ at time $t\ge 0$ are given by $\{r_{i,j}(t), \upsilon_{i,j}(t): i = 1, \dots , N_t^j\}$, where, for each $j = 1, \dots, m$, $N_t^j$ is the number of type $j$ emissions alive at time $t$. In order to describe the system as Markovian, we will represent it by the atomic measures
\[
X_t(j, A) = \sum_{i=1}^{N_t^j}\delta_{(r_{i,j}(t), \upsilon_{i,j}(t))}(A), \quad j = 1, \dots, m,
\]
where $A$ is a Borel subset of $D \times V$ and $\delta$ is the Dirac measure defined on the same space. Then the system can be described via the $m$-tuple $X_t(\cdot) = (X_t(1,\cdot),\dots, X_t(m,\cdot))$, $t\ge 0$, which evolves as follows.

\smallskip

$\triangleright$ A emission of type $i \in \{1, \dots, \ell\}$ with configuration $(r, \upsilon)$ moves in a straight line with velocity $\upsilon$ from the point $r$ until one of the following events occur:

\begin{itemize}
\item The emission leaves the domain, at which point it is killed.

\item Independently of all other emissions, a scattering event occurs when a emission comes in close proximity to an atomic nucleus and, accordingly, makes an instantaneous change of velocity. For an emission  in the system of type $i \in \{1, \dots, \ell\}$ with initial position and velocity $(r,\upsilon)$, if we write $T^i_{\texttt{s}}$ for the random time until the next  scattering occurs, then, independently of any other physical event that may affect the emission, 
\begin{equation}
\Pr(T^i_{\texttt{s}}>t) = \exp\left\{-\int_0^t \sigma^i_{\texttt{s}}(r+\upsilon s, \upsilon)\D s \right\}.
\label{spirit1}
\end{equation}
\item When scattering of an emission of type $i \in \{1, \dots, \ell\}$ occurs at space-velocity $(r,\upsilon)$, the new velocity is selected independently with probability $\pi^i_{\texttt{s}}(r, \upsilon, \upsilon')\d\upsilon'$. 

\item  Independently of all other emissions, a fission event occurs when an emission smashes into an atomic nucleus. For an emission in the system with initial position and velocity $(r,\upsilon)$, we will write $T^i_{\texttt{f}}$ for the random time that the next fission  occurs. Then independently of any other physical event that may affect the emission, 
\begin{equation}
\Pr(T^i_{\texttt{f}}>t) = \exp\left\{-\int_0^t \sigma^i_{\texttt{f}}(r+\upsilon s, \upsilon)\D s \right\}.
\label{spirit2}
\end{equation}
\item When fission occurs, the smashing of the atomic nucleus  releases a random number of other prompt emissions of type $i  =1,\cdots, \ell$, say $N^{i,j}\geq 0$,  which are ejected from the point of impact with randomly distributed, and possibly corollated, velocities, say $\{\upsilon^{i,j}_k: k = 1, \cdots, N^{i,j}\}$. When fission occurs at location $r\in D$ from a emission with incoming velocity $\upsilon\in{V}$, the quantity $\pi^{i,j}_{\texttt{f}}(r, \upsilon, \upsilon')\d\upsilon'$  describes the average number of type $j$ prompt emissions released from nuclear fission with outgoing velocity in the infinitesimal neighbourhood of $\upsilon'$. In particular 
\[
\int_A\pi^{i,j}_{\texttt{f}}(r, \upsilon, \upsilon')\d\upsilon' = {\rm E}\left[\sum_{k =1}^{N^{i,j}}\mathbf{1}_{(\upsilon^{i,j}_k\in A)} \right], \qquad A\in \mathcal{B}(V).
\]

\item Note, the possibility that $\Pr(N^{i,j} = 0)>0$ is possible. If $i = j = 1$ then this is tantamount to neutron capture or further decomposition into subatomic particles which are not counted.

\item Further, if the initial emission is a (type 1) neutron, a fission event (occurring at rate $\sigma^1_{\texttt{f}}$) may result in the production of unstable isotopes (which later release delayed emissions). On this event, an average number, $m^j(r, \upsilon)$, of type $j \in\{\ell+1, \dots, m\}$ isotopes will be produced from a collision at position $r$ from a neutron with incoming velocity $\upsilon$. The isotopes will inherit the configuration of the incoming nucleus at the time of collision.
\end{itemize} 
\smallskip

$\triangleright$ An isotope of type $i \in \{\ell+1, \dots, m\}$ with inherited physical configuration $(r, \upsilon)$ stays in the same place for an exponentially distributed amount of time with rate $\lambda_i$. At this point, it produces a random number of type $j \in \{1, \dots, \ell\}$ prompt emissions, the average number of which, along with their corresponding velocities, are chosen according to $\pi^{i,j}_{\texttt{f}}(r, \upsilon, \upsilon')$, in a similar way to previously described. We note that although unstable isotopes stay in the same spatial position, we will still assign them a velocity as a `mark'.

\smallskip

 In all cases, it is a natural make the following physical assumption which will remain in force throughout.
 
 \begin{assumptions}\rm
 Random emissions of any type are bounded in number  by the non-random constant $n_{\texttt{max}}\geq 1$.
In particular this means that 
\[
\sup_{1\leq i\leq m, 1\leq j\leq \ell, r\in D, \upsilon\in V}\pi^{i,j}_{\texttt f}(r,\upsilon,V)\leq n_\texttt{max}\quad\text{ and }\quad\sup_{r,\in D, \upsilon\in V_1, 1\leq j\leq \ell}m^j(r,\upsilon)\leq  {n}_{\texttt{max}}.
\]
 \end{assumptions}

%$\triangleright$  Independently of other emissions, a emission of type $i$ is issued into the system at point $r\in D$ with velocity $\upsilon$ with rate $Q(i,r, \upsilon)$. (In a realistic physical model, $Q>0$ only in a part of the domain $D$ where a radioactive material has been positioned, e.g. in hospital equipment).

\bigskip

For non-negative and uniformly bounded $g: \prod_{i = 1}^m (D\times V)\mapsto [0,\infty)$,   that is $g\in \prod_{i=1}^mL^+_{\infty}(D\times V)$, define the {\it expectation semigroup}
\begin{equation}
\psi_t[g](i, r, v) := \E_{\delta_{(i,r, v)}}[\langle g, X_t \rangle],
\label{psi}
\end{equation}
where $\mathbb{P}_{\delta_{(i,r, v)}}$ is law of the process started from a single type $i$ emission with configuration $(r,\upsilon)$ with corresponding expectation operator $\E_{\delta_{(i,r, v)}}$. 

\smallskip

As we have assumed that all cross sections are uniformly bounded, ignoring spatial trajectories of neutrons (in particular those that are killed by leaving the domain $D$), it is straightforward to 
compare the growth of $(\psi_t[g], t\geq 0)$ against that of a continuous-time Galton-Watson process with growth rate 
$
\eta\{(m\times n_{\texttt{max}})-1\}
$, where $\eta = \sup_{1\leq i\leq\ell, r\in D, \upsilon \in V}\sigma^i_{\texttt{f}}(r,\upsilon) + \max_{\ell+1\leq i\leq m}\lambda_i$.

\smallskip

The rate of growth $
\eta\{(m\times n_{\texttt{max}})-1\}
$ simply assumes that each emission of type $i$ gives rise to at most $n_{\texttt{max}}$ emissions of any other type and at a rate which is uniformly bounded by a uniform upper bound of all possible rates at which fission events occur.  Note this rate takes account of the emission  count introduced into the system at a fission event and the single emission removed from the system which caused the fission event. 

\smallskip

It is also  straightforward to stochastically upper bound the process $\langle 1, X_t\rangle$, $t\geq 0$, by the aforesaid continuous-time Galton Watson process on the same probability space.  The latter process branches whenever $X$ does, topping up the number of offspring always to $n_{\texttt{max}}$, but also  it has additional independent branching events at rate $(\eta-\mathbf{1}_{(1\leq i\leq\ell)}\sigma^i_{\texttt{f}}(r,\upsilon) - \mathbf{1}_{(\ell+1\leq i\leq m)}\lambda_i)$ always producing precisely $n_{\texttt{max}}$ offspring of each of the $m$ possible emissions.

\smallskip

If we denote this Galton-Waton process by $(Z_t, t\geq 0)$, then we have both the  stochastic bound  $\langle 1, X_t \rangle\leq Z_t\leq Z_{t+s}$, for all $s,t\geq 0$ and  the upper estimate 
\begin{equation}
\sup_{1\leq i\leq m, r\in D, \upsilon\in V}\psi_t[g](i, r, \upsilon) \leq ||g||_\infty\exp(\eta((n_{\texttt{max}}\times m)-1) t),\qquad  t\geq 0.
\label{psibound}
\end{equation}
If we put $g$ in the smaller space $\prod_{i = 1}^m C^+(D\times V)$, the space of non-negative, continuous and uniformly bounded vector functions on $(D\times V)$, then we also have by a dominated convergence argument, 
$
\lim_{t\to0}\psi_t[g] =  g$ in the pointwise sense. Otherwise the latter convergence is not necessarily clear.

\smallskip

The name `expectation semigroup' is earned thanks to the behaviour of $(\psi_t, t\geq 0)$ under an application of the  Markov branching property. 
Indeed, associated to the MNBP are the probabilities $\mathbb{P}_{\mu}$ for atomic measures of the form 
\begin{equation}
\mu=\left( \sum_{i =1}^{n_1}  \delta_{(1, r_{i, 1}, \upsilon_{i, 1})}, \cdots,\sum_{i =1}^{n_m}  \delta_{(m, r_{i, m}, \upsilon_{i, m})}.
\right) =: (\mu_1,\cdots,\mu_m).
\label{atomicmeasure}
\end{equation}
The  Markov branching property dictates that, for  $g\in \prod_{i = 1}^m L_2 (D\times V)$ as before and $t\geq 0$, 
\[
\mathbb{E}_{\mu}[\langle g, X_t\rangle] =  \sum_{j =1}^{m}  \sum_{i =1}^{n_j}
\mathbb{E}_{\delta_{(j, r_{i, j}, \upsilon_{i, j})}}[\langle g, X_t\rangle] 
= \langle \mathbb{E}_{\delta_{(\cdot, \cdot, \cdot)}}[\langle g, X_t\rangle] ,  \mu \rangle
\]
Here we are abusing our earlier notation in \eqref{innerprods} and writing for finite atomic measures $\mu$ of the form \eqref{atomicmeasure}, 
\begin{equation}
\langle  g,\mu \rangle = \sum_{i = 1}^m ( g, \mu)_i,
\quad\text{ 
where 
 }\quad
(g,\mu)_i = \int_{D\times V} g(i, r, \upsilon) \mu_i(\d r, \d\upsilon) .
\label{innerprods2}
\end{equation}
Hence, by conditioning on the configuration of the system at time $t\geq0$, we have, for $s\geq 0$,
\begin{equation}
\psi_{t+s}[g](i, r, v) := 
\E_{\delta_{(i,r, v)}}
\left[
\E_{X_t}
[   \langle f, X_s \rangle  ]\right]=\E_{\delta_{(i,r, v)}}
\left[
\langle  \psi_s [g],X_t \rangle
\right]
= \psi_{t}[\psi_s[g]](i, r, v).
\label{expectationsemigroup}
\end{equation}

The expectation semigroup property of $(\psi_t, t\geq 0)$   does not  imply that it is necessarily a $c_0$-semigroup on $\prod_{i= 1}^m L_2({D}\times V)$. Recalling our earlier discussion, if we were able to work with \eqref{ACP} in the setting of a $c_0$-semigroup on $\prod_{i=1}^m L_\infty(D\times V))$, then we would be much closer to being able to match the expectation semigroup $(\psi_t, t\geq 0)$ to the solution $(u_t, t\geq 0)$. But even then, problems would occur with verifying strong continuity at the origin.

\smallskip

 Nonetheless, classical literature supports the view that it is the physical processes, i.e. in this setting the MNBP, that provides a stochastic representation of the solution to the backward MNTE. The authors are not aware of a formal proof of this fact. We will nonetheless try to address this point shortly in Section \ref{consolidate}. In the mean time, let us present an alternative `mild' form of the MNTE (also called a {\it Duhamel solution} in the PDE literature) which the semigroup $(\psi_t,t\geq 0)$ more comfortably solves. 
 \begin{lemma}\label{mild}
The expectation semigroup  $(\psi_t[g], t\geq 0)$  is the unique  solution in $\prod_{i=1}^m L^+_\infty (D\times V)$ to the mild MNTE 
\begin{equation}
u_t(i,r,\upsilon)  =\emph{\texttt U}_t[g](i,r,\upsilon) + \int_0^t \emph{\texttt U}_s[({\ebS+\ebF})u_{t-s}](i,r,\upsilon)\d s,
\label{mildeq}
\end{equation}
for $t\geq 0$, $1\leq i\leq m$, $r\in D, \upsilon\in V$ and  $g\in \prod_{i = 1}^m L^+_\infty(D\times V)$. 
\end{lemma}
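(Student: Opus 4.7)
The plan is to establish existence (namely, that $\psi_t[g]$ solves \eqref{mildeq}) and uniqueness (within $\prod_{i=1}^m L^+_\infty(D\times V)$) separately. For existence my approach is a first-event decomposition of the probabilistic object $\psi_t[g](i,r,\upsilon)=\mathbb{E}_{\delta_{(i,r,\upsilon)}}[\langle g,X_t\rangle]$, obtained by conditioning on the time $\tau$ of the first scattering, fission, or (for a delayed isotope) decay event affecting the initial particle. For uniqueness I would use that $\texttt{U}$ is a contraction in the sup norm and that $\bS+\bF$ is a bounded linear operator on $\prod_{i=1}^m L_\infty(D\times V)$, which allows a standard Gronwall argument.

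For the existence step, fix $i\in\{1,\dotsc,\ell\}$ (the case $i\in\{\ell+1,\dotsc,m\}$ being analogous, with advection suppressed and the event rate equal to $\lambda_i$). The first-event time $\tau$ has density $\sigma^i(r+\upsilon s,\upsilon)\exp\bigl(-\int_0^s\sigma^i(r+\upsilon u,\upsilon)\,\d u\bigr)$ on $[0,\kappa^D_{r,\upsilon})$, with $\sigma^i=\sigma^i_{\texttt{s}}+\sigma^i_{\texttt{f}}$. On $\{\tau>t\wedge\kappa^D_{r,\upsilon}\}$ the initial particle is the sole survivor, contributing $g(i,r+\upsilon t,\upsilon)\mathbf{1}_{t<\kappa^D_{r,\upsilon}}$, whilst on $\{\tau\leq t\wedge\kappa^D_{r,\upsilon}\}$ the Markov branching property applied at $\tau$ expresses $\mathbb{E}[\langle g,X_t\rangle\mid\mathcal{F}_\tau]$ as an integral of $\psi_{t-\tau}[g]$ against the prescribed offspring distribution. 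Taking expectations and collecting the scattering, fission and delayed contributions produces an `exponentially killed' Duhamel identity of the shape
\[
\psi_t[g]=\widetilde{\texttt{U}}_t[g]+\int_0^t\widetilde{\texttt{U}}_s\bigl[\mathcal{K}\psi_{t-s}[g]\bigr]\,\d s,
\]
where $\widetilde{\texttt{U}}_t$ is the advection semigroup carrying the additional exponential killing $\exp(-\int_0^t\sigma^i)$ and $\mathcal{K}:=(\bS+\bF)+\diagonal(\sigma)$ is the non-negative offspring kernel (the $+\diagonal(\sigma)$ cancelling the $-\sigma^i f$ diagonal terms inside $\bS+\bF$). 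To pass from this to \eqref{mildeq}, I would expand both identities as Dyson--Phillips series — in powers of $\mathcal{K}$ against $\widetilde{\texttt{U}}$, and in powers of $\bS+\bF$ against $\texttt{U}$ — and verify term-by-term that they produce the same series, using the algebraic fact that the $-\diagonal(\sigma)$ component of $\bS+\bF$ is precisely what, upon iterated convolution with $\texttt{U}_s$, reproduces the exponential killing factor inside $\widetilde{\texttt{U}}$.

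For uniqueness, suppose $u^{(1)},u^{(2)}\in\prod_{i=1}^m L^+_\infty(D\times V)$ are two solutions of \eqref{mildeq} sharing the initial datum $g$. Their difference $w_t:=u^{(1)}_t-u^{(2)}_t$ solves the homogeneous equation $w_t=\int_0^t\texttt{U}_s[(\bS+\bF)w_{t-s}]\,\d s$. Since $\texttt{U}_s$ is contractive in $\norm{\cdot}_\infty$ and $\bS+\bF$ is bounded with some operator norm $M$, one deduces $\norm{w_t}_\infty\leq M\int_0^t\norm{w_{t-s}}_\infty\,\d s$, and iteration (or Gronwall) forces $\norm{w_t}_\infty=0$ on every bounded time interval. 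The hard part that I expect is the bookkeeping in the existence step — matching the naturally-arising exponentially killed equation with the bare-advection form in \eqref{mildeq}. This is essentially a Dyson--Phillips identity, but the verification is non-trivial because $\bS+\bF$ couples the different emission types (and, in particular, the prompt and delayed sectors), so one must track the matrix structure carefully through the iterated convolutions rather than argue coordinate by coordinate.
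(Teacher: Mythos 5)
Your proposal is correct and follows essentially the same route as the paper: a first-event (first scattering/fission/decay) decomposition yielding the exponentially killed Duhamel identity, followed by a Gr\"onwall argument in the sup norm for uniqueness. The only difference is cosmetic: where you propose a term-by-term Dyson--Phillips comparison to strip the $\exp(-\int_0^t\sigma^i)$ killing and the $+\diagonal(\sigma)$ term and arrive at the bare-advection form \eqref{mildeq}, the paper simply invokes the standard transfer lemma (Lemma 1.2, Chapter 4 of Dynkin, or the appendix of \cite{KP}), which accomplishes exactly that bookkeeping.
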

Before proceeding to the proof, let us remark that, in the statement of the theorem, we are not working with $(\texttt{U}_t, t\geq 0)$ as a $c_0$-semigroup on $\prod_{i = 1}^m L_\infty(D\times V)$, but a pointwise shift operator. The reader will recall from the discussion preceding \eqref{adv} that $(\texttt{U}_t, t\geq 0)$ cannot be defined as such for $\prod_{i = 1}^m L_\infty(D\times V)$.

%\smallskip
%
%Next, 
%thinking of $(\psi_t[g], t\geq 0)$ as the expectation semigroup of nothing more than a spatial Markov branching process, the reader familiar with such processes will note that \eqref{mildeq} is not quite what would normally be called a mild equation. Indeed, as a spatial Markov branching process, one would normally think of the Markov process underlying the MNBP  as what we can call {\it multi-species nuclear random walk (MNRW)}.
%
%\smallskip
%

%In short the MNRW, $ (J_t, R_t, \Upsilon_t)$, $t\geq 0$ is nothing more than a stochastic process with infinitesimal generator $\bT+\bS$. 
%For short, we refer to it as an $(\bT+\bS)$-MNRW. The process $J$,  appears as somewhat artificial as the emission type does not change during a scattering event. However, we can continue the MNRW beyond the a fission event by continuing to follow the trajectory of one of the emission offspring, for which the value of $J$ may change. 
%
%\smallskip
%
%
% If we suppose that $(\texttt{P}_t, t\geq 0)$ is the expectation semigroup of the MNRW, then the way we would normally present the expectation semigroup $(\psi_t, t\geq 0)$ as the solution to a mild equation would be in the form 
%\begin{equation}
%u_t[g] = {\texttt{P}}_t[g] + \int_0^t {\texttt{P}}_s[{{\bF}}u_{t-s}[g]]\d s, \qquad t\geq 0, g\in L_\infty(D\times V).
%\label{normalmild}
%\end{equation}
%See for example the discussion in Dynkin's approach to branching particle processes and superprocesses \cite{D1, D2, D3}

\begin{proof}[Proof of Lemma \ref{mild}] First suppose we start with an emission of type $i$.
By splitting the expectation in the definition of $\psi_t[g]$ at the first scattering or fission event, and remembering that the time $\kappa_{r,\upsilon}^{D}$ defined in \eqref{deterministic} is deterministic, we have for $r\in D$ and $\upsilon\in V$,
\begin{align*}
&\psi_t[g](i,r,\upsilon) \\
& ={\rm e}^{-\int_0^t\sigma^i(r+\upsilon s,\upsilon)\d s}g(i,r+\upsilon t,\upsilon)\mathbf{1}_{(t<\kappa_{r,\upsilon}^{D} )}\\
&\hspace{1cm}+\int_0^{t\wedge\kappa_{r,\upsilon}^{D} } \sigma^i(r+\upsilon s,\upsilon){\rm e}^{-\int_0^s\sigma^i(r+\upsilon u,\upsilon)\d u}\\
&\hspace{2cm}\Bigg\{
\frac{\sigma^i_{\texttt{s}}(r+\upsilon s,\upsilon)}{\sigma^i(r+\upsilon s,\upsilon)}\int_{V}\psi_{t-s}[g](i, r+\upsilon s, \upsilon') \pi_{\texttt s}^i(r+\upsilon s, \upsilon, \upsilon')\d\upsilon'\\
&\hspace{3cm}+
\frac{\sigma^i_{\texttt{f}}(r+\upsilon s,\upsilon)}{\sigma^i(r+\upsilon s,\upsilon)} \Bigg(\sum_{j =1}^m\int_{V}\psi_{t-s}[g](j, r+\upsilon s, \upsilon') \pi^{i,j}_{\texttt f}(r+\upsilon s, \upsilon, \upsilon')\d\upsilon' \\
&  \hspace{6cm}+\mathbf{1}_{(i=1)}\sum_{j = \ell+1}^m m^j(r+\upsilon s, \upsilon)\psi[g](j, r+\upsilon s, \upsilon,  t-s)\Bigg)\Bigg\}\d s\\
&={\rm e}^{-\int_0^t\sigma^i(r+\upsilon s,\upsilon)\d s}g(i, r+\upsilon t,\upsilon)\mathbf{1}_{(t<\kappa_{r,\upsilon}^{D} )}\\
&\hspace{2cm}+\int_0^{t\wedge\kappa_{r,\upsilon}^{D} } {\rm e}^{-\int_0^s\sigma^i(r+\upsilon u,\upsilon)\d u}
({\bS}_i+{\bF}_i+\sigma^i)\psi_{t-s}[g](i, r+\upsilon s, \upsilon)\d s, \qquad t\geq 0.
\end{align*}
 Now appealing to an analogue of  Lemma 1.2, Chapter 4 in \cite{Dynkin2} (see also the Appendix of \cite{KP}), we can transfer the exponential integrals in each of the terms on the right-hand side above to a potential term in the integral so that we end with 
 \begin{equation}
 \psi_t[g](r,\upsilon)= g(i, r+\upsilon t,\upsilon)\mathbf{1}_{(t<\kappa_{r,\upsilon}^{D} )}
+\int_0^{t\wedge\kappa_{r,\upsilon}^{D} } ({\bS}_i+{\bF}_i)\psi_{t-s}[g](i, r+\upsilon s, \upsilon')\d s, \qquad t\geq 0,
\label{<l}
 \end{equation}
which agrees with \eqref{mildeq}, for $1\leq i\leq \ell.$

\smallskip

Following a similar approach, for $\ell+1\leq i\leq m$, $r\in D$, $\upsilon,\in V$, we also get
\begin{align}
\psi_t[g](i,r,\upsilon) &=g(i, r+\upsilon t,\upsilon)\mathbf{1}_{(t<\kappa_{r,\upsilon}^{D} )}
 - \lambda_i \int_0^{t\wedge\kappa_{r,\upsilon}^{D} }\psi_{t-s}[g](i, r+\upsilon s, \upsilon)\D s\notag\\
&\qquad+\int_0^{t\wedge\kappa_{r,\upsilon}^{D} }\lambda_i\left\{\sum_{j=1}^\ell\int_{V}\psi_{t-s}[g](1, r+\upsilon s, \upsilon')\pi_{\texttt{f}}^{i,j}(r+\upsilon s, \upsilon, \upsilon')\D \upsilon' \right\}\D s\notag\\
&=g(i, r+\upsilon t,\upsilon)\mathbf{1}_{(t<\kappa_{r,\upsilon}^{D} )}
+\int_0^{t\wedge\kappa_{r,\upsilon}^{D} } ({\bS}_i+{\bF}_i)\psi_{t-s}[g](i, r+\upsilon s, \upsilon')\d s,\qquad t\geq 0,
\label{>l}
\end{align}
noting in particular that, for $\ell+1\leq i\leq m$, ${\bS}_i \equiv 0$. Now putting \eqref{<l} and \eqref{>l} together we obtain \eqref{mildeq}.
\smallskip

For uniqueness, suppose that $(\psi^{(i)}_t, t\geq 0)$, $i= 1,2$ are two bounded solutions to \eqref{mildeq}. Define $\chi_t[g]: =|\psi^{(1)}_t[g] - \psi^{(2)}_t[g]|$ and note that, for $i=1\cdots,m$,
\begin{align}
\chi_t[g](i, r,\upsilon)& \leq\int_0^{t\wedge \kappa_{r,\upsilon}^{D} } |({\bS}+{\bF}) \psi^{(1)}_{t-s}[g](i,r+\upsilon s, \upsilon) - ({\bS}+{\bF}) \psi^{(2)}_{t-s}[g](i, r+\upsilon s, \upsilon)|   \d s  \notag\\
&\leq \int_0^{t\wedge \kappa_{r,\upsilon}^{D} } ({\bS}+{\bF})|\psi^{(1)}_{t-s}[g](i,r+\upsilon s, \upsilon) - \psi^{(2)}_{t-s}[g](i, r+\upsilon s, \upsilon)|   \d s  \notag\\
&\leq \int_0^{t\wedge \kappa_{r,\upsilon}^{D} } ({\bS}+{\bF})\chi_{t-s}[g](i,r+\upsilon s, \upsilon)  \d s  \notag\\
&\leq C_1 \int_0^{t\wedge \kappa_{r,\upsilon}^{D} } \sum_{j=1}^m\int_{V} \chi_{t-s}[g](j, r+\upsilon s, \upsilon')\d\upsilon'   \d s
+C_2\int_0^{t\wedge \kappa_{r,\upsilon}^{D} }   \chi_{t-s}[g](i, r+\upsilon s, \upsilon)\d s
\label{plug1}
\end{align}
for some constants $C_1, C_2\in(0,\infty)$, where the final inequality follows on account of all cross sections being uniformly bounded. 
 Now  define $ \bar\chi_t[g]=\sup_{1\leq i\leq m, r\in {D}, \upsilon\in{V}}\chi_t[g](i,r, \upsilon)$, $t\geq 0$. From \eqref{plug1} we have  that
\begin{align}
\bar\chi_t[g]&\leq \left(C_1\sum_{j = 1}^m \texttt{Vol}(V) +C_2\right)\int_0^{t }\bar\chi_{t-s}[g] \d s.
\end{align}
Reversing the order of integration on the right-hand side above and then applying Gr\"onwall's Lemma allows us to conclude that $\chi_t[g]\equiv0 $, which shows uniqueness. 
\end{proof}

%We close this section by remarking that, given the computations in the proof of Lemma \ref{mild}, it is clear that conditioning the path of an initial emission type on its first fission event (rather than it first fission {\it or} scattering event) will quickly lead to \eqref{normalmild}. Moreover, the same proof of uniqueness using Gr\"onwall's Lemma will also apply leading to uniqueness in the class of bounded solutions to \eqref{normalmild}, which moreover must agree with the unique bounded solution to \eqref{mildeq}.

\section{Multi-species neutron random walk and the Many-to-one Lemma}\label{RW}
A second  probabilistic perspective for analysing the MNTE is possible, seems rarely to have been discussed in existing literature, if at all. This consists of collapsing the sum of the operators ${\bT} + {\bS}+{\bF}$ to take the form ${\bL}+\diagonal ({\beta})$ for an appropriate choice of $\beta$, where $\bL$ is the operator which is similar in structure to ${\bT} + {\bS}$. In essence, this transformation, which we will describe more rigorously in a moment, heuristically postulates that the operator ${\bT} + {\bS}+{\bF}$ can be reinterpreted via a Feynman-Kac formula as the infinitesimal generator of a single emission which undergoes linear transport and scattering and which also accumulates potential $\beta$.

\smallskip

To describe this more precisely, we need to introduce the notion of a 
 {\it multi-species
neutron random walk} (MNRW). In the current setting this means a continuous-time typed random walk by $ (J_t, R_t, \Upsilon_t)$, $t\geq 0$, on $\{1,\cdots,m\}\times (D\times V)$ with additional cemetery state $\{\dagger\}$ when it exits the physical domain $D$ or an emission otherwise disappears from the system. 
The MNRW is described by two fundamental quantities (which are functions of the current particle type, spatial position and velocity). First,  a scattering rate $\alpha^i(r,\upsilon)$, $i\in\{1,\cdots,m\}, r\in D, \upsilon,\upsilon'\in V$, such that $\alpha^i(r,\upsilon) = \lambda_i$, for $i \in\{\ell+1,\cdots,m\}$. Second, a scattering probability kernel $\pi^{i,j}(r,\upsilon, \upsilon')$, $i,j\in \{1,\cdots, m\}, r\in D, \upsilon,\upsilon'\in V$.
In the spirit of the description of the MNBP, the MNRW is described as follows.

\smallskip

$\triangleright$ When the MNRW is of type $i \in \{1, \dots, \ell\}$ with configuration $(r, \upsilon)$, it moves in a straight line with velocity $\upsilon$ from the point $r$ until one of the following events occur:

\begin{itemize}
\item When the MNRW position moves out of $D$ or e.g. it decomposes into an emission type that is not counted, or is captured in a nucleus, it is instantaneously killed.

\item A scattering event occurs  and, accordingly, the MNRW keeps the same emission type but makes an instantaneous change of velocity. If we write $T^i_{\texttt{s}}$ for the random time until the next  scattering occurs, then, 
\begin{equation}
\Pr(T^i_{\texttt{s}}>t) = \exp\left\{-\int_0^t \alpha^i(r+\upsilon s, \upsilon)\D s \right\}.
\label{alphascatter}
\end{equation}
\item When scattering of an emission of type $i \in \{1, \dots, \ell\}$ occurs at space-velocity $(r,\upsilon)$, the new velocity is selected independently with probability $\pi^i(r, \upsilon, \upsilon')\d\upsilon'$.
\end{itemize}

$\triangleright$  Otherwise, if $\ell+1\leq i\leq m$, then the emission remains motionless, i.e. the random walk is dormant, holding its initial position $r$, but retaining the velocity $\upsilon$ as a mark.
After an independent and exponentially distributed random time with rate $\lambda_i$, the particle transfers it type $j\in\{1,\cdots,\ell\}$ and acquires a new velocity $\upsilon'$ with probability density $\pi^i(r,\upsilon,\upsilon')$.

\smallskip

We can associate to the MNRW the infinitesimal generator
\begin{align}
 {\bL}_if(r,v) &: =  \mathbf{1}_{(i\le \ell)}\upsilon \cdot \nabla f(i, r,\upsilon)\mathbf{1}_{(r\in D)}\notag\\
 &+ \alpha^i(r, \upsilon)\sum_{j=1}^m\int_{V}[f(j, r, \upsilon') - f(i, r, \upsilon)]\pi^{i,j}(r, \upsilon, \upsilon')\D \upsilon'.
 \label{L}
\end{align}
for $f\in \text{Dom}({\bL}) = \text{Dom}({\bT})$. We thus refer to the process as an $\bL$-MNRW.

\smallskip

With the notion of the MNRW in hand, let us consider the following algebraic manipulations.
 For $i\in\{1, \dots, \ell\}$, $j\in\{1, \dots, m\}$, $(r,\upsilon) \in D\times V$ and $\upsilon' \in V$, define
\begin{align}
\alpha^i(r,\upsilon) &= \mathbf{1}_{(1\leq i\leq \ell)}\sigma_{\texttt{s}}^i(r,\upsilon) \notag\\
&\hspace{1cm}+\mathbf{1}_{(1\leq i\leq \ell)} \sigma_{\texttt{f}}^i(r,\upsilon)\Bigg(\sum_{j=1}^\ell\int_V\pi_{\texttt{f}}^{i,j}(r, \upsilon,\upsilon')\d\upsilon'+\mathbf{1}_{(i=1)}\sum_{j =\ell+1}^m m^j(r,\upsilon) \Bigg)\notag\\
&\hspace{5cm}+\mathbf{1}_{(\ell+1\leq i\leq m)}\lambda_i\sum_{j=1}^\ell \int_V\pi_{\texttt{f}}^{i,j}(r, \upsilon, \upsilon')\d\upsilon',
\label{alpha}
\end{align}
\begin{align}
\pi^{i,j}(r, \upsilon, \upsilon') &= (\alpha^i(r,\upsilon))^{-1}\Bigg[\sigma_{\texttt{s}}^i(r,\upsilon)\pi_{\texttt{s}}^i(r, \upsilon, \upsilon')\mathbf{1}_{(1\leq i=j\leq \ell)} \notag\\
&\hspace{3cm}+ \sigma_{\texttt{f}}^i(r,\upsilon)\left(\pi_{\texttt{f}}^{i,j}(r, \upsilon, \upsilon')\mathbf{1}_{(1\le i, j\le \ell)} + m^j(r,\upsilon)\mathbf{1}_{(i=1, j>\ell))} \right)\notag\\
&\hspace{7.5cm}
+\lambda_i\pi_{\texttt{f}}^{i,j}(r, \upsilon, \upsilon')\mathbf{1}_{(\ell+1 \le i \le m, \, j\le \ell)}\bigg],\label{pi}\\
\beta^i(r,\upsilon) &= \alpha^i(r,\upsilon) - \mathbf{1}_{(1\leq i\leq \ell)} \sigma^i_{\texttt{s}}(r,\upsilon) 
- \mathbf{1}_{(\ell+1\leq i\leq m)}\lambda_i- \mathbf{1}_{(1\leq i\leq \ell)} \sigma^i_{\texttt{f}}(r,\upsilon) .\label{beta}
\end{align}
Note, in particular, that for each fixed $1\leq i\leq m$, $r\in D$ and $\upsilon\in V$, $\pi^{i,j}(r,\upsilon,\upsilon')$ is a probability distribution on $\{1,\cdots, m\}\times V$ in the sense that $\sum_{j=1}^m \int_{V} \pi^{i,j}(r,\upsilon,\upsilon')\d \upsilon' = 1$.
Note also that the assumption $\sum_{j= 1}^\ell \int_V\pi_{\texttt{f}}^{i,j}(r, \upsilon,\upsilon')\d\upsilon'\geq0$ ensures that $\beta^i\geq0$, for $1\leq i\leq m$.

\smallskip

With simple algebra, we may now identify 
\begin{equation}
({\bT}+ {\bS}+ {\bF})f ( r, \upsilon)
= {\bL} f ( r, \upsilon)+ \texttt{diag}(\beta) f ( r, \upsilon)  
\label{compact}
\end{equation}
where, for $f\in  $ Dom$({\bA}) $ (for which it was remarked earlier that it is  equal to $\text{Dom}({\bT})$),  and 
${\bL}$ is given by \eqref{L}.

\smallskip

Heuristically speaking, we have algebraically gathered all of the operators into the infinitesimal generator of an $\bL$-MNRW  and local potential $\beta$. This has the attraction of leading us the aforementioned single emission representation of the solution to the MNTE using a single-emission Feynman-Kac representation.  
%
%\smallskip 
%
%
%We thus consider a  stochastic process $(J_t, R_t, \Upsilon_t)$, ${t\ge 0}$ defined as follows. Firstly, set $J_0 = i \in \{1, \dots, m\}$, $R_0 = r \in D$ and $\Upsilon_0 = \upsilon \in V$. The emission will start with initial configuration $(i, r, \upsilon)$ and move according to the dynamics of a type $i$ emission, i.e. remain in the same place if $i \ge\ell+1$ or move in a straight line issued from $r$ with velocity $\upsilon$ if $i \le \ell$. In a similar fashion to~\eqref{spirit1}, it will scatter with rate $\alpha^i$, which depends on the current configuration of the emission. It then chooses a new type, $j$ and outgoing velocity $\upsilon'$ according to $\pi^{i,j}(\cdot, \cdot, \upsilon')$, after which it continues with the motion of a type $j$ emission from the scatter point with velocity $\upsilon'$. The emission is killed if it hits the boundary of $D$. Said another way,  $(J, R, \Upsilon)$ is a MNRW with infinitesimal generator $\bL$.
%
%\smallskip
%
%The aforementioned heuristic interpretation of the operator $\bL + \diagonal(\beta)$ is that is the infinitesimal generator associated to the MNRW  $(J_t, R_t, \Upsilon_t)$, ${t\ge 0},$ with potential $\beta$.
 Said another way, this means that one would expect that, in the appropriate sense, the solution to the NTE to be represented in the form 
\begin{equation}
\phi_t[g](i,r,\upsilon)  = \mathbf{E}_{(i, r,\upsilon)}\left[{\rm e}^{\int_0^t\beta^{J_s}(R_s, \Upsilon_s)\D s}g(J_t, R_t, \Upsilon_t) \mathbf{1}_{(t < \tau_D)}\right], %\qquad
\label{phi}
\end{equation}
for  $t\geq 0$, $1\leq i\leq m$, $r\in D, \upsilon\in V$. Here
 ${\bf P}_{(i,r, v)}$ for the law of the $\bL$-MNRW  starting from a single emission with configuration $(i, r, \upsilon)$, and $\er_{(i,r, v)}$ for the corresponding expectation operator.

\smallskip

Appealing to the Markov property for $(J, R, \Upsilon)$, it is not difficult to show that a semigroup property similar to \eqref{expectationsemigroup} holds. That is to say, for $s,t\geq 0$, $1\leq i\leq m$, $r\in D, \upsilon\in V$
\[
\phi_{s+t}[g](i,r,\upsilon) = \phi_{s}[\phi_t[g]](i,r,\upsilon).
\] 
Similarly to the case of $(\psi_t[g], t\geq 0)$, if we put $g$ in the smaller space $\prod_{i = 1}^m C^+(D\times V))$ then we also have $\lim_{t\to0}\phi_t[g] = g$ in the pointwise sense, but otherwise strong continuity at $t = 0$ is unclear. 
Note also  that, since all cross sections are uniformly bounded, 
then so is $\beta$ (in all of its variables) by a constant, say $\bar\beta$. Hence, for  $g\in \prod_{i = 1}^m L_\infty(D\times V)$, the $\phi_t[g]\leq \norm{g}_\infty\exp(\bar\beta t)$, $t\geq 0$.
As with the case of $(\psi_t[g], t\geq 0)$, the notion that $(\phi_t[g], t\geq 0)$, solves \eqref{ACP} is not a straightforward claim. Nonetheless, as one might expect, these two expectation semigroups are equal and, we can see this by relating back to \eqref{mildeq}.
\smallskip

Indeed, by conditioning the expectation in the definition of $\phi_t[g]$ on the first scattering event, and then appealing to the Lemma 1.2, Chapter 4 in \cite{Dynkin2} in a similar manner to what was done in the proof of Lemma \ref{mild}, one easily deduces the below result. In the the spatial branching process literature, this would be called a `{\it many-to-one}' lemma.

\begin{lemma}
For $g\in \prod_{i = 1}^m L^+_\infty(D\times V)$, the two expectation semigroups $(\phi_t[g], t\geq 0)$ and $(\psi_{t}[g], t\geq0)$ agree.
\end{lemma}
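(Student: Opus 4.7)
The plan is to show that $(\phi_t[g], t\geq 0)$ is a bounded solution to the mild equation \eqref{mildeq} and then invoke the uniqueness statement proved at the end of the proof of Lemma \ref{mild} to conclude that $\phi_t[g]=\psi_t[g]$. Boundedness is immediate: since $\beta$ is bounded above by some constant $\bar\beta$, the estimate $\phi_t[g]\leq \|g\|_\infty \mathrm{e}^{\bar\beta t}$ places $\phi_t[g]$ in $\prod_{i=1}^m L^+_\infty(D\times V)$ for all $t\geq 0$.

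For the main step, first consider $1\leq i\leq \ell$ and condition on the first scattering time $T^i$ of the $\bL$-MNRW, which has hazard rate $\alpha^i(r+\upsilon s,\upsilon)$ along the deterministic trajectory, together with the location and velocity variables chosen according to $\pi^{i,j}(r+\upsilon s,\upsilon,\upsilon')$. This yields
\begin{align*}
\phi_t[g](i,r,\upsilon) &= \mathrm{e}^{\int_0^t (\beta^i-\alpha^i)(r+\upsilon s,\upsilon)\d s} g(i,r+\upsilon t,\upsilon)\mathbf{1}_{(t<\kappa_{r,\upsilon}^D)}\\
&\qquad + \int_0^{t\wedge \kappa_{r,\upsilon}^D} \alpha^i(r+\upsilon s,\upsilon)\mathrm{e}^{\int_0^s (\beta^i-\alpha^i)(r+\upsilon u,\upsilon)\d u}\\
&\hspace{3cm}\times \sum_{j=1}^m \int_V \pi^{i,j}(r+\upsilon s,\upsilon,\upsilon')\phi_{t-s}[g](j,r+\upsilon s,\upsilon')\d\upsilon'\,\d s.
\end{align*}
From the definition \eqref{beta} one reads off $\alpha^i-\beta^i=\sigma^i$ for $i\leq \ell$, which turns the two exponential prefactors into $\mathrm{e}^{-\int_0^\cdot \sigma^i\d s}$. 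Then the identity \eqref{pi} lets us expand $\alpha^i\pi^{i,j}$ into its constituent scattering, fission and isotope-production terms and a direct comparison with the definitions of ${\bS}_i$ and ${\bF}_i$ gives
$$
\alpha^i(r,\upsilon)\sum_{j=1}^m \int_V \pi^{i,j}(r,\upsilon,\upsilon')f(j,r,\upsilon')\d\upsilon' = ({\bS}_i+{\bF}_i+\sigma^i)f(\cdot,r,\upsilon),\qquad 1\leq i\leq\ell.
$$
An analogous computation for $\ell+1\leq i\leq m$, using $\alpha^i-\beta^i=\lambda_i$, gives the corresponding identity with $\lambda_i$ in place of $\sigma^i$ on the right. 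Substituting back produces exactly the pre-Dynkin representation of $\psi_t[g]$ appearing in the proof of Lemma \ref{mild}.

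Applying the analogue of Lemma 1.2, Chapter 4 of \cite{Dynkin2} to absorb the exponential factor into the integrand as a potential term (exactly as in the transition from the first to the second displayed equation in the proof of Lemma \ref{mild}), we obtain
$$
\phi_t[g](i,r,\upsilon) = g(i,r+\upsilon t,\upsilon)\mathbf{1}_{(t<\kappa_{r,\upsilon}^D)} + \int_0^{t\wedge\kappa_{r,\upsilon}^D} ({\bS}_i+{\bF}_i)\phi_{t-s}[g](i,r+\upsilon s,\upsilon)\d s,
$$
valid for all $1\leq i\leq m$, i.e.\ $\phi_t[g]$ satisfies the mild equation \eqref{mildeq}. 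Since both $\phi_t[g]$ and $\psi_t[g]$ are bounded solutions of \eqref{mildeq}, the uniqueness assertion of Lemma \ref{mild} forces $\phi_t[g]=\psi_t[g]$.

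The main obstacle, and the only place where anything more than routine conditioning and Dynkin's identity is needed, is the bookkeeping of the numerous cross-section terms that appear under the indicator functions in ${\bS}_i$ and ${\bF}_i$: one needs to verify case by case (the four combinations of $i,j$ being $\leq\ell$ or $>\ell$) that the single quantity $\alpha^i\pi^{i,j}$ built from \eqref{alpha} and \eqref{pi} reassembles precisely into the scattering kernel, the prompt fission kernel, the delayed-isotope production $\sigma^1_{\mathtt f}m^j$ term, and the isotope-to-neutron term $\lambda_i \pi^{i,j}_{\mathtt f}$, together with the correct removal coefficient $\sigma^i$ or $\lambda_i$ that is then cancelled by the exponential. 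Once this algebraic identification is made, the reduction to \eqref{mildeq} and the appeal to uniqueness are mechanical.
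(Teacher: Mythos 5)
Your proof is correct and follows exactly the route the paper sketches for this lemma: condition on the first jump of the $\bL$-MNRW, use $\alpha^i-\beta^i=\sigma^i$ (resp.\ $\lambda_i$) and the identity $\alpha^i\pi^{i,j}=({\bS}_i+{\bF}_i+\sigma^i)$-kernel to recover the pre-Dynkin form of $\psi_t[g]$, apply Lemma 1.2 of Chapter 4 of \cite{Dynkin2}, and conclude by the uniqueness clause of Lemma \ref{mild}. You have in fact supplied more detail (the bookkeeping of \eqref{alpha}--\eqref{beta} and the explicit appeal to uniqueness) than the paper, which leaves these steps implicit.
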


\section{Consolidating the ACP with the expectation semigroup}\label{consolidate}
We want to understand how the $\prod_{i= 1}^m L_2({D}\times V)$ semigroup $(\texttt{V}_t, t\geq 0)$ that represents the unique solution to the Abstract Cauchy Problem \eqref{ACP} relates to the expectation semigroups $(\psi_t, t\geq 0)$ and $(\phi_t, t\geq 0)$ that offer two different stochastic representations to the mild equation \eqref{mildeq}.

\smallskip

We start by noting that if $g\in \prod_{i = 1}^m L^+_\infty(D\times V)$, then, on account of the fact that ${\rm Vol}(\prod_{i =1}^m (D\times V)) =( \int_{D\times V}\d r\d\upsilon)^m<\infty$, we also have $g\in \prod_{i= 1}^m L_2({D}\times V)$. Since it is unclear whether $(\psi_t[g], t\geq 0)$ is well defined for all $g\in \prod_{i= 1}^m L_2({D}\times V)$, it makes  makes sense  to consider the comparison with $(\texttt{V}_t[g], t\geq 0)$ (defined in \eqref{V}) for the more restrictive choice $g\in \prod_{i = 1}^m L_\infty(D\times V)$. The natural setting in which to make the comparison is in the space $\prod_{i= 1}^m L_2({D}\times V)$ as, by \eqref{psibound}, $\norm{\psi_t[g]}_\infty<\infty$ and the latter implies $\norm{\psi_t[g]}_2<\infty$, again  thanks to the fact that ${\rm Vol}(\prod_{i =1}^m (D\times V))<\infty.$

\begin{theorem}\label{vtpsit}
If $g\in \prod_{i = 1}^m L^+_\infty(D\times V)$ then, for $t\geq 0$,  $\emph{\texttt{V}}_t[g] = \psi_t[g]$ on $\prod_{i= 1}^m L_2({D}\times V)$, i.e.
$\norm{\emph{\texttt{V}}_t[g] - \psi_t[g]}_2 = 0$. %In particular, for $(\d r\times \d \upsilon)$-Lebesgue almost everywhere we have that $\psi_t[g](r,\upsilon) = \emph{\text{V}_t[g]}$.
\end{theorem}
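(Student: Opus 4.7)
The plan is to show that both $\texttt{V}_t[g]$ and $\psi_t[g]$, viewed as trajectories in $\prod_{i=1}^m L_2(D\times V)$, satisfy the same $L_2$-valued mild equation, and then close the argument with a Gr\"onwall-type $L_2$ uniqueness. For the $\texttt{V}_t$ side, I would appeal to the bounded perturbation theorem for $c_0$-semigroups: since $(\texttt{U}_t)_{t \geq 0}$ is the $c_0$-semigroup generated by $\bT$ on $\prod_{i=1}^m L_2(D\times V)$, and since $\bB := \bS + \bF$ is a bounded linear operator on this space (as observed just after \eqref{domT}, where $\mathrm{Dom}(\bS) = \mathrm{Dom}(\bF) = \prod_{i=1}^m L_2(D\times V)$), the standard Dyson--Phillips / variation-of-constants formula (see e.g. Corollary III.1.7 of \cite{EN}) gives
\[
\texttt{V}_t[g] = \texttt{U}_t[g] + \int_0^t \texttt{U}_s\bigl[\bB\,\texttt{V}_{t-s}[g]\bigr]\, \d s,
\]
valid for every $g \in \prod_{i=1}^m L_2(D\times V)$ with the integral in the Bochner sense.

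For the $\psi_t$ side, Lemma \ref{mild} already gives the identical equation pointwise. The uniform bound \eqref{psibound} shows that, whenever $g \in \prod_{i=1}^m L^+_\infty(D\times V)$, the trajectory $s \mapsto \psi_s[g]$ is uniformly bounded on any compact time interval. Combined with $\mathrm{Vol}(D\times V) < \infty$, this places $\psi_s[g]$ into $\prod_{i=1}^m L_2(D\times V)$ with a uniform $L_2$-dominator on $[0,t]$, so Fubini legitimately identifies the pointwise time integral with the corresponding Bochner integral in $L_2$. Consequently $\psi_t[g]$ satisfies the same $L_2$-valued mild equation as $\texttt{V}_t[g]$.

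Setting $w_t := \texttt{V}_t[g] - \psi_t[g]$ and subtracting the two identities, contractivity of $\texttt{U}_s$ on $L_2$ (which follows from the pointwise definition \eqref{Udef} via the change of variable $r' = r + \upsilon s$ for each fixed $\upsilon$) combined with $L_2$-boundedness of $\bB$ yields
\[
\norm{w_t}_2 \;\le\; \norm{\bB}_{\mathrm{op}} \int_0^t \norm{w_{t-s}}_2 \,\d s,
\]
so that Gr\"onwall's inequality forces $\norm{w_t}_2 = 0$ for every $t \ge 0$, proving the theorem. The main obstacle I anticipate is the bookkeeping in the middle step: verifying that the pointwise mild equation of Lemma \ref{mild} genuinely lifts to an $L_2$-valued Bochner integral equation requires checking joint measurability of $(s, r, \upsilon) \mapsto \texttt{U}_s[\bB\psi_{t-s}[g]](i,r,\upsilon)$ and the dominated integrability needed for Fubini. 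Both are routine consequences of \eqref{psibound} and uniform boundedness of all cross sections, but this is the place where one has to be careful --- once that is done, bounded perturbation theory and Gr\"onwall conclude the argument.
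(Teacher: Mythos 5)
Your proposal is correct and follows essentially the same route as the paper: both arguments show that $\texttt{V}_t[g]$ and $\psi_t[g]$ satisfy the same Duhamel (mild) equation in $\prod_{i=1}^m L_2(D\times V)$ and then close with a Gr\"onwall estimate using contractivity of $\texttt{U}_s$ on $L_2$ and the $L_2$-boundedness of ${\bS}+{\bF}$. The only cosmetic difference is that you obtain the variation-of-constants formula for $\texttt{V}_t$ by citing the bounded perturbation theorem, whereas the paper derives it by applying uniqueness to an auxiliary inhomogeneous abstract Cauchy problem with forcing term $({\bS}+{\bF})\texttt{V}_t[g]$.
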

Before moving to its proof, the reader should take care to note that this does not imply that $(\texttt{V}_t, t\geq 0)$ and $(\psi_t, t\geq 0)$ agree as $c_0$-semigroups on $\prod_{i= 1}^m L_2({D}\times V)$. In particular,  the comparison between the two semigroup operators is only made on $\prod_{i = 1}^m L_2(D\times V)$, and $(\psi_t, t\geq 0)$ was not   (and in fact cannot be) shown to demonstrate the strong continuity property on $\prod_{i= 1}^m L_2({D}\times V)$.

\begin{remark}\rm If we consider 
Theorem \ref{vtpsit} in light of Theorem \ref{leadingeval}, noting that $(\psi_t[g],t\geq0)$ is a uniformly bounded sequence,  it is tempting to want to say that the leading eigenfunction $\varphi$ belongs to $\prod_{i = 1}^m L_\infty(D\times V)$. This is not the case necessarily and remains to be proved. In the setting of a single type of emission, this will be demonstrated in the forthcoming paper \cite{SNTE}.
\end{remark}
\begin{proof}[Proof of Theorem \ref{vtpsit}]
Consider the adjusted ACP with inhomogeneity given by 
\begin{equation}
\left\{
\begin{array}{rl}
\dfrac{\partial u_t}{\partial t} &= {\bT} u_t+({\bS}+{\bF})\texttt{V}_{t}[g]
\\
u_0& = g 
\end{array}
\right.
\label{adjACP}
\end{equation}
By taking the difference of two solutions and invoking the uniqueness of the ACP in  $\prod_{i= 1}^m L_2({D}\times V)$ with initial data $g = 0$, we note that the solution to \eqref{adjACP}  is  unique in $\prod_{i= 1}^m L_2({D}\times V)$. However, on the one hand, it is straightforward to verify that 
\[
u_t= {\rm e}^{t{\bT}}g + \int_0^t{\rm e}^{(t-s){\bT}} ({\bS}+{\bF})\texttt{V}_{s}[g] \d s, \qquad t\geq 0,
\]
solves \eqref{adjACP}.
On the other hand, taking account of the fact that $({\texttt V}_t[g],t\geq 0)$ solves \eqref{ACP}, it is also the case that 
\[
u_t = {\texttt V}_t[g], \qquad t\geq 0,
\]
solves \eqref{adjACP}. Uniqueness thus tells us that on $\prod_{i= 1}^m L_2({D}\times V)$, 
\[
{\texttt V}_t[g] = {\texttt U}_t[g]+ \int_0^t{\rm e}^{(t-s){\bT}} ({\bS}+{\bF})\texttt{V}_{s}[g] \d s  = 
{\texttt U}_t[g]+ \int_0^t\texttt{U}_s[ ({\bS}+{\bF})\texttt{V}_{t-s}[g]] \d s, \qquad t\geq 0,
\]
where in the second equality we have reversed the direction of integration. 
In conclusion, where as $(\psi_t[g], t\geq 0)$ solves \eqref{mildeq} in the pointwise sense, $({\texttt V}_t[g], t\geq 0)$ solves it in the $\prod_{i= 1}^m L_2({D}\times V)$ sense.

\smallskip

On the other hand, we know that $(\psi_t[g], t\geq 0)$ is valued in $\prod_{i= 1}^m L_2({D}\times V)$, hence we can consider,
\[
\norm{\psi_t[g] - {\texttt V}_t[g]}_2
= \norm{\int_0^t \texttt{U}_s[ ({\bS}+{\bF})\{\psi_{t-s}[g]-\texttt{V}_{t-s}[g]\}]\d s}_2, \qquad t\geq 0.
\]
To this end, let us note that, for $T>0$, and $w_t\in \prod_{i= 1}^m L_2({D}\times V)$, $t\leq T$, we have 
\begin{align}
\norm{\int_0^t w_s\d s}_2^2 &= \int_{D\times V} \left(t\int_0^tw_s (r,\upsilon)\frac{\d s}{t}\right)^2\d r\d\upsilon \notag\\
&\leq  \int_{D\times V} t^2\left(\int_0^tw_s (r,\upsilon)^2\frac{\d s}{t}\right)\d r\d\upsilon\notag\\
&\leq T\int_0^t \norm{w_s}^2_2 \d s, \qquad t\leq T,
\label{normint}
\end{align}
where in the first inequality we have used Jensen's inequality and Cauchy-Schwarz  in the second.
Moreover, for $f\in \prod_{i= 1}^m L_2({D}\times V)$,
\begin{align}
\norm{\texttt{U}_s[f]}^2_2 &= \sum_{i =1}^m\int_{D\times V} \mathbf{1}_{(s<\kappa_{r,\upsilon}^D)}f(i, r+\upsilon s, \upsilon)^2\d r\d \upsilon\notag\\
&\leq \sum_{i =1}^m\int_{D\times V}  f(i, r', \upsilon)^2\d r '\d \upsilon \notag\\
&=\norm{f}_2^2
\label{subdomain}
\end{align}
where the inequality follows as a consequence that, for each $\upsilon$, the integral $\int_D \mathbf{1}_{(s<\kappa_{r,\upsilon}^D)}v(i, r+\upsilon s, \upsilon)^2\d r$ integrates over a subdomain of $D$.
Also, we have for the operator ${\bS}$ (and similarly for ${\bF}$), for $f\in \prod_{i= 1}^m L_2({D}\times V)$,
\begin{align}
\norm{({\bS}+\diagonal(\sigma_{\texttt{s}})) f}_2 &= \left(\sum_{i = 1}^m\int_{D\times V}
\left(\int_{V} f(i,r,\upsilon')\sigma_{\texttt{s}}(r,\upsilon)\pi^i_{\texttt{s}}(r,\upsilon,\upsilon')\d\upsilon'\right)^2
\d r\d\upsilon \right)^{1/2}\notag\\
&\leq C\left(\sum_{i = 1}^m\int_{D\times V}
\left(\int_{V} f(i,r,\upsilon')\times 1\,\d\upsilon'\right)^2
\d r\d\upsilon\right)^{1/2}\notag\\
&\leq C
\left(\sum_{i = 1}^m {\rm Vol}(V)\int_{D\times V} \int_{V}f(i, r, \upsilon')^2\d\upsilon'\d r\right)^{1/2}\notag\\
&\leq C \max_{1\leq i\leq m}{\rm Vol}(V) \norm{f}_2,
\label{Sftof}
\end{align}
where the constant $C$ appears by upper estimating the uniformly bounded cross sections  and in the second inequality we have used Cauchy-Schwarz.
\smallskip

It thus follows from \eqref{normint}, \eqref{subdomain} and \eqref{Sftof} that, for $t\leq T$,  writing $\omega_t = \psi_t[g]-\texttt{V}_{t}[g]$, $t\geq 0$,
\begin{align}
\norm{\omega_t}_2^2&= \left\|\int_0^t \texttt{U}_s[ ({\bS}+{\bF})\omega_{t-s}]\d s\right\|^2_2\notag\\
&\leq   T\int_0^t \norm{\texttt{U}_s[ ({\bS}+{\bF})\omega_{t-s}]}_2^2\d s\notag\\
&\leq  T\int_0^t\norm{({\bS}+{\bF} )\omega_{t-s}}_2^2\d s\notag\\
&=  T\int_0^t\norm{({\bS}+{\bF}+\diagonal(\sigma)-\diagonal(\sigma) )\omega_{s}}_2^2\d s\notag\\
&\leq  T\int_0^t
\left(\norm{({\bS}+ \diagonal(\sigma_{\texttt s}))\omega_s}_2 +\norm{({\bF}+ \diagonal(\sigma_{\texttt f}))\omega_s}_2 +\norm{\diagonal(\sigma)\omega_{s}}_2\right)^2
\d s\notag\\
&\leq  C' \int_0^t\norm{\omega_s}_2^2\d s,\qquad t\leq T,
\label{omega}
\end{align}
where the constant $C'$ comes from the fact that $\sigma$ is uniformly bounded.
%\smallskip
%
%From \eqref{psibound} we have that there exits constants $C_1$ and $a_1$ such that  $\norm{\psi_t[g]}_2\leq C_1\exp(a_1 t)$, $t\geq 0$. The fact that $\texttt{V}_t$ is a strongly continuous semigroup implies that there exists constants $C_2$ and $a_2$ such that 
%$\norm{\texttt{V}_t[g]}_2\leq C_2\exp(a_2 t)$,
%$t\geq 0$.
The final inequality in \eqref{omega} together with Gr\"onwall's Lemma now tells us that $\norm{\omega_t}_2 = 0$, for all $t\leq T$. Since $T$ is chosen arbitrarily, it follows that $(\psi_t[g],t\geq 0)$ and $(\texttt{V}_t[g], t\geq 0)$ are indistinguishable in $\prod_{i= 1}^m L_2({D}\times V)$.
\end{proof}

The conclusion of this section is that it is not unreasonable to now understand the expectation semigroups $(\psi_t[g],t\geq 0)$ and $(\phi_t[g],t\geq 0)$ for non-negative, bounded and measurable $g$ on $D\times V$ as the `solution' to the MNTE in place of $(\texttt{V}_t[g], t\geq 0)$ for the same class of $g$. Indeed, the two agree both in $\prod_{i= 1}^m L_2({D}\times V)$ and hence  $(\d r\times \d \upsilon)$-Lebesgue almost everywhere.

\smallskip

The reader will also note that from the perspective of Monte Carlo simulation, the expectation semigroup $ (\phi_t[g],t\geq 0)$ carries the potential to be exploited in a way that $ (\psi_t[g],t\geq 0)$ cannot. More precisely, where branching trees are difficult to simulate and are not convenient for  Monte Carlo computational parallelisation, random walks are. This simple idea is explored in greater detail in the accompanying paper to this one \cite{MCNTE}.

\section{Asymptotic behaviour of the MNTE: Proof of Theorem \ref{leadingeval}}\label{proof}
In this  section we return to the fundamental notion that the solution to the MNTE in the form \eqref{ACP} is described by its leading asymptotics for large times. That is to say, we give the proof of Theorem \ref{leadingeval}.  Our proof follows closely ideas found in Chapters 4 and 5 of \cite{M-K}.
\smallskip

Recall that the quantities $\alpha^i$, $\pi^{i,j}$, $\beta^i$, $i,j  =1,\cdots,m$ were defined in \eqref{alpha}, \eqref{pi} and \eqref{beta} respectively. They were arranged into the operator $\bA= \bT+\bS+\bF$, such that Dom$(\bA)=$  Dom$({\bT})$, described in \eqref{domT}.
 \smallskip

For $j = 1, \dots,m$, let us introduce the operators $\K_{i,j}$ on  $L_2(D \times V) $ by
\[
\K_{i,j}f(r,\upsilon) = \alpha^i(r,\upsilon)\int_{V}f(r, \upsilon')\pi^{i,j}(r, \upsilon, \upsilon')\D \upsilon' .
\]
These are  integral operators, which take the form
\[
\K_{i,j}f(r,\upsilon)  = \int_{V} f(r,\upsilon') \texttt{k}_{i,j}(r,\upsilon,\upsilon')\d \upsilon'
\]
on $D\times V\times V$,
where
\begin{equation}
\texttt{k}_{i,j}(r,\upsilon,\upsilon') = \sigma^i_{\texttt{s}}\pi^i_{\texttt{s}}(r,\upsilon,\upsilon')+  \sigma^i_{\texttt{f}}\pi^{i,j}_{\texttt{f}}(r,\upsilon,\upsilon').
\label{k}
\end{equation}
A similar computation to \eqref{Sftof}  also shows that $\K_{i,j}g \in L_2(D \times V)$ when $g\in L_2(D \times V)$.
%Moreover write 
%\[
%\bT^{ \beta-\alpha} = \bT +\diagonal(\beta-\alpha)
%\]
Then from \eqref{promptNTE} and \eqref{pi}, taking care to note the use of the indicators for the inclusion of terms for different indices,  we can write, for $1\leq i\leq \ell$, for $g\in {\rm Dom}(\bA)$,
 \begin{align}
\bA_i g(i,r,\upsilon)&= \bT_ig(i,r,\upsilon) -\sigma^i (r,\upsilon)g(i,r,\upsilon)\notag\\
& \hspace{1cm}+\sum_{j = 1}^\ell\K_{i,j}g(j, r,\upsilon)
+ \mathbf{1}_{(i=1)}\sigma^1(r, \upsilon)\sum_{j =\ell +1}^m m^j(r,\upsilon)g(j,r,\upsilon)
 \label{genND2}
\end{align}
Moreover, for $\ell+1\leq i\leq m$, 
 \begin{align}
\bA_i g(i,r,\upsilon)&= -\lambda_ig(i, r,\upsilon) + \sum_{j=1}^\ell \K_{i,j}g(j, r,\upsilon)
 \label{genD}
\end{align}

\smallskip

With this notation, write
\begin{align*}
\T&=\diagonal({\bT_1-\sigma^1,\cdots,\bT_\ell}-\sigma^\ell)  ,\\
\Lambda &= \diagonal(\lambda_{\ell +1}, \dots, \lambda_m),\\
\K^\circ &= (\K_{i,j}), \quad \text{ for } i,j = 1, \dots, \ell,\\
\M &= (\M_{i,j}), \quad \text{ where } \M_{i,j} = \sigma^1(r,\upsilon)m^j(r,\upsilon)\mathbf{1}_{(i=1)}, \text{ for } i = 1,\dots, \ell, j = \ell+1,\dots, m,\\
\K_\circ &= (\K_{i,j}), \quad \text{ for }  i = \ell+1, \dots, m, j = 1, \dots, \ell. 
\end{align*}
Then the abstract Cauchy problem \eqref{ACP} on $\prod_{j= 1}^m L_2({D}\times V)$   may now be written in matrix form 
\[
\frac{\partial }{\partial t}u_t = \boldsymbol{A}u_t, \qquad t\geq 0. %, \eqqcolon Af,
\]
where $\boldsymbol{A} = \boldsymbol{T} + \boldsymbol{K}$ and % $f = (f_1,\dots, f_m)^T$,
\[
\boldsymbol{T} = \begin{bmatrix}
\T & \textbf{0}\\
	\textbf{0} & -\Lambda
\end{bmatrix}\quad
\text{ 
and
}
\quad
\boldsymbol{K} = \begin{bmatrix}
	\K^\circ & \M\\
	\K_\circ & \textbf{0}
\end{bmatrix}.
\]
The matrix $\boldsymbol{T}$ is an operator on $\prod_{i=1}^m L_2(D\times V))$ with domain
\[
{\rm Dom}(\boldsymbol{T}) = \prod_{i = 1}^\ell {\rm Dom}(\bT_i) \times\prod_{i = \ell+1}^m L_2(D \times V)
\]
which generates the strongly continuous semigroup 
$({\U}^{\boldsymbol{T}}_t, t\geq 0)$ given by 
\begin{equation}
{\U}^{\boldsymbol{T}}_t[g] =
\left\{
\begin{array}{ll}
{\rm e}^{-\int_0^t \sigma^i(r+\upsilon s, \upsilon)\D s}\U_t[g]& 1\leq i\leq \ell \\
 {\rm e}^{-\lambda_i t}&\ell + 1\leq i\leq m,
\end{array}
\right. 
\label{TUresolvent}
\end{equation}
for $g\in \prod_{i=1}^m L_2(D\times V))$.

\smallskip

%
%\begin{defn}
%Let $E \subset L(L_2(V))$ be the subspace of compact operators. A collision operator
%\[
%K: r\in D \to K(r) \in L(L_2(V))
%\]
%is regular if 
%\begin{itemize}
%\item[(i)] $D \ni r \mapsto K(r)f$ is measurable for each $f \in L_2(V)$,
%\item[(ii)] $K(r)\in E$ a.e.,
%\item[(iii)] $r\in D \to K(r)\in E$ is measurable, and
%\item[(iv)] $\{K(r): r\in D\}$ is relatively compact in $L(L_2(V))$.
%\end{itemize}
%\end{defn}
%The last two conditions say that the collision operators depends "smoothly" on the space variable. They are satisfied if, for instance, $D$ is bounded and 
%\[
%r\in\bar{D} \to K(r) \in L(L_2(V))
%\]
%is piecewise continuous. Heuristically, these operators offer some compactness as operators on $L(L_2(V))$ since they can be approximated by finite rank operators~\cite[Chapter 4]{M-K}.
% 
% 

\smallskip

In order to prove Theorem \ref{leadingeval}, we consider a different operator that is related to $A$ as follows. Consider the eigenvalue problem
\begin{equation}
\boldsymbol{A}\varphi = \lambda \varphi, \quad \lambda > -\lambda_{\ell +1},
\label{Apsi}
\end{equation}
for  $\varphi \in \prod_{i  =1}^m L_2(D\times V)$. Write 
\[
\varphi^\circ(\cdot) =(\varphi(1,\cdot),\cdots, \varphi(\ell, \cdot) )\text{ and } \varphi_\circ(\cdot) =(\varphi(\ell+1,\cdot),\cdots, \varphi(m, \cdot) )
\]
 so that $\varphi$ is the concatenation $(\varphi^\circ, \varphi_\circ)$. Separating this into prompt and delayed initial emissions, it can be written as 
\begin{align*}
&\T \varphi^\circ + \K^\circ \varphi^\circ + \M\varphi_\circ = \lambda \varphi^\circ\\
&\lambda{\texttt{I}}_{m-\ell}\varphi_\circ= -\Lambda\varphi_\circ + \K_\circ\varphi^\circ,
\end{align*}
where $\texttt{I}_{m-\ell}$ is the $(m-\ell)\times(m-\ell)$ identity matrix.
Substituting the second equation into the first, we get 
\begin{equation}
\varphi_\circ = 
(\lambda\texttt{I}_{m-\ell}+\Lambda)^{-1}\K_\circ\varphi^\circ
\label{2Psis}
\end{equation}
and
\begin{equation}
(\lambda \texttt{I}_\ell - \T)^{-1}\K^\circ(\lambda)\varphi^\circ =\varphi^\circ\text{ where }\K^\circ(\lambda) = \K^\circ+ \M(\lambda\texttt{I}_{m-\ell}+\Lambda)^{-1}\K_\circ.
\label{criteriaPSI}
\end{equation}
%\begin{align*}
%&(\lambda \texttt{I}_\ell- \T)^{-1}\K^\circ(\lambda)f_0 = f_0\\
%&f_i = \frac{[K_1f_0]_i}{\lambda + \lambda_i}, \quad l+1 \le i \le m,
%\end{align*}
%where
%\[
%K(\lambda) = K_0 + \widetilde{K}_0, \qquad (\widetilde{K}_0)_{ij} = \sum_{r=l+1}^m\frac{\alpha^1 K_{rj}}{\lambda+\lambda_r}\mathbf{1}_{(i=1)}.
%\]
Our strategy is to show that  there exists a $\lambda_c$ such that $(\lambda \texttt{I}_\ell - \T)^{-1}\K^\circ(\lambda)$ has a leading eigenvalue $1$, and that  this is equivalent to $\lambda_c$ being an eigenvalue of $\boldsymbol{A}$. The tool we shall use to do this is the Krein-Rutman Theorem, which we recall here for convenience in a format that is appropriate for our use; c.f. \cite[p. 286]{DL6}.
\begin{theorem}[Krein-Rutman Theorem]\label{KR}
Let $X$ be a Banach space and suppose it contains a convex cone $\mathcal{C}$ such that $\mathcal{C} - \mathcal{C}: = \{h = f-g: f, g\in \mathcal{C}\}$ is dense in $X$.
 Suppose $\mathcal{L}$ is a positive compact linear operator on $X$ such that ${r}(\mathcal{L}) \coloneqq \sup\{|\lambda| : \lambda \in \Sigma(\mathcal{L})\} > 0$, where $\Sigma(\mathcal{L})$ is the spectrum of the operator $\mathcal{L}$. Then ${r}(\mathcal{L})$ is an eigenvalue of $\mathcal{L}$ with a corresponding positive eigenfunction.
\end{theorem}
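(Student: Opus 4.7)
The approach is to realise a positive eigenvector of $\mathcal{L}$ at the eigenvalue $r(\mathcal{L})$ as a norm-limit of normalised resolvent images along a decreasing sequence of real spectral parameters. First, for $\lambda > r(\mathcal{L})$, Gelfand's spectral radius formula and the Neumann expansion give $R(\lambda, \mathcal{L}) = (\lambda I - \mathcal{L})^{-1} = \sum_{n\geq 0}\lambda^{-(n+1)}\mathcal{L}^n$, with convergence in operator norm. Since $\mathcal{L}$ is positive, so is every $\mathcal{L}^n$, and hence $R(\lambda, \mathcal{L})$ is itself a positive operator for each $\lambda > r(\mathcal{L})$; the map $\lambda \mapsto R(\lambda, \mathcal{L})u$ is, for every fixed $u\in\mathcal{C}$, a monotone decreasing $\mathcal{C}$-valued function of $\lambda$ on $(r(\mathcal{L}),\infty)$.

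The second ingredient is to certify that $r(\mathcal{L})$ itself lies in $\Sigma(\mathcal{L})$. Compactness of $\mathcal{L}$ combined with $r(\mathcal{L})>0$ gives, via Riesz--Schauder, that $\Sigma(\mathcal{L})\setminus\{0\}$ consists of isolated eigenvalues of finite multiplicity, and some eigenvalue $\mu_0$ satisfies $|\mu_0|=r(\mathcal{L})$. The upgrade to the real value $r(\mathcal{L})$ being spectral is a Pringsheim-type statement for positive operators: if $r(\mathcal{L})$ were in the resolvent set, then by analyticity the norm of $R(\lambda, \mathcal{L})$ would stay bounded as $\lambda \downarrow r(\mathcal{L})$, whereas monotonicity of the Neumann partial sums and the presence of spectral points on the circle $\{|\mu|=r(\mathcal{L})\}$ force singularity at the real point $r(\mathcal{L})$. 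Fix a nonzero $u\in\mathcal{C}$ with $\mathcal{L}u\neq 0$; such a $u$ exists because density of $\mathcal{C}-\mathcal{C}$ in $X$ together with $r(\mathcal{L})>0$ forbid $\mathcal{L}$ from vanishing on $\mathcal{C}$. One then deduces the existence of a sequence $\lambda_n\downarrow r(\mathcal{L})$ along which $\|R(\lambda_n, \mathcal{L})u\|\to\infty$.

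The final step extracts the eigenvector. Set $\varphi_n = R(\lambda_n, \mathcal{L})u / \|R(\lambda_n, \mathcal{L})u\|$, which lies in $\mathcal{C}$ with unit norm and satisfies $(\lambda_n I - \mathcal{L})\varphi_n = u/\|R(\lambda_n, \mathcal{L})u\| \to 0$. Compactness of $\mathcal{L}$ furnishes a subsequence along which $\mathcal{L}\varphi_{n_k}\to z$; coupled with the preceding limit this identifies $\lambda_{n_k}\varphi_{n_k}\to z$, so $\varphi_{n_k}\to \varphi := z/r(\mathcal{L})$. Then $\|\varphi\|=1$, $\varphi\in\mathcal{C}$ (assuming the cone is norm-closed, which is the standard Krein--Rutman convention), and $\mathcal{L}\varphi = r(\mathcal{L})\varphi$. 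The main obstacle is the spectral-localisation step: for a general bounded operator one only knows $\Sigma(\mathcal{L})\subseteq\{|\mu|\leq r(\mathcal{L})\}$ with some spectral point on the boundary, so identifying the real value $r(\mathcal{L})$ itself as spectral is the genuinely positivity-dependent feature and is where the cone structure and the density hypothesis on $\mathcal{C}-\mathcal{C}$ do their essential work.
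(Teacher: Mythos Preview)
The paper does not actually prove this theorem: it is stated without proof and attributed to \cite[p.~286]{DL6}, then used as a black box in the proof of Proposition~\ref{intereval}. So there is no ``paper's own proof'' to compare against; your attempt goes well beyond what the authors do.

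That said, your argument is the standard resolvent-and-compactness route and is essentially correct. One point deserves tightening. You fix a nonzero $u\in\mathcal{C}$ with $\mathcal{L}u\neq 0$ and assert that $\|R(\lambda_n,\mathcal{L})u\|\to\infty$ along some $\lambda_n\downarrow r(\mathcal{L})$. Knowing that $r(\mathcal{L})$ is a pole of the resolvent gives $\|R(\lambda,\mathcal{L})\|\to\infty$, but this need not blow up on every individual $u\in\mathcal{C}$ with $\mathcal{L}u\neq 0$. The clean fix uses the density hypothesis directly: pick $x\in X$ with $\|R(\lambda_n,\mathcal{L})x\|\to\infty$, approximate $x$ by $u_1-u_2$ with $u_1,u_2\in\mathcal{C}$, and conclude that at least one of $\|R(\lambda_n,\mathcal{L})u_i\|$ is unbounded. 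Your parenthetical about the cone being norm-closed is also worth flagging explicitly, since the paper's statement omits it but it is needed to conclude $\varphi\in\mathcal{C}$.
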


Our proof of Theorem \ref{leadingeval} requires the following intermediary result below. Before stating it, the reader is reminded that the eigenvalues $\lambda_{\ell+1},\cdots, \lambda_m$ are arranged so that $\lambda_{\ell+1}$ is the smallest. Thus, the condition $\lambda>-\lambda_{\ell+1}$ ensures that $\K^\circ(\lambda)$ is well defined. In particular, $(\lambda\texttt{I}_{m-\ell}+\Lambda)$ is invertible. We will use the obvious meaning for ${\texttt{I}}_\ell$.

\begin{prop}\label{intereval}
Under the assumptions of Theorem~\ref{leadingeval}, for each $\lambda > -\lambda_{\ell +1}$, ${r}\big((\lambda \emph{\texttt{I}}_\ell - \emph{T})^{-1}\emph{\K}^\circ(\lambda)\big)$ is the leading eigenvalue of $(\lambda \emph{\texttt{I}}_\ell - \emph{\T})^{-1}\emph{\K}^\circ(\lambda)$ with a corresponding positive eigenfunction $\varphi^\circ_{\lambda}$.
\end{prop}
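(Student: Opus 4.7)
My plan is to apply the Krein--Rutman Theorem (Theorem~\ref{KR}) to the operator $\mathcal{L}_\lambda := (\lambda \texttt{I}_\ell - \T)^{-1}\K^\circ(\lambda)$ acting on the Banach space $X = \prod_{i=1}^\ell L_2(D\times V)$, using as convex cone $\mathcal{C}$ the non-negative functions in $X$ (so $\mathcal{C} - \mathcal{C} = X$ is automatic). The proposition would then follow by reading off the leading eigenvalue $r(\mathcal{L}_\lambda)$ with positive eigenfunction $\varphi^\circ_\lambda$. I need to verify three things: (a) $\mathcal{L}_\lambda$ is well defined and positive, (b) it is compact, and (c) its spectral radius is strictly positive.

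For (a), note that $\T = \diagonal(\bT_1-\sigma^1,\dots,\bT_\ell-\sigma^\ell)$ generates the positive $c_0$-semigroup given by the prompt-type part of \eqref{TUresolvent}, which has operator norm uniformly bounded by ${\rm e}^{-\sigma_{\min} t}$ for $\sigma_{\min}\geq 0$. Hence for any real $\lambda > -\lambda_{\ell+1}$ (in fact for much larger half-planes) the resolvent identity
\begin{equation*}
(\lambda\texttt{I}_\ell - \T)^{-1} g = \int_0^\infty {\rm e}^{-\lambda t}\U^{\boldsymbol{T},\circ}_t[g]\, \d t
\end{equation*}
is valid on $X$, where $\U^{\boldsymbol{T},\circ}_t$ is the prompt block of the semigroup in \eqref{TUresolvent}. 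Because that semigroup is positivity-preserving, so is $(\lambda\texttt{I}_\ell-\T)^{-1}$. Further, $(\lambda\texttt{I}_{m-\ell}+\Lambda)^{-1}$ is a diagonal operator with strictly positive entries, and $\K^\circ$, $\K_\circ$ and $\M$ all have non-negative integral kernels (cf.\ \eqref{k} and the definition of $\M$), so $\K^\circ(\lambda) = \K^\circ + \M(\lambda\texttt{I}_{m-\ell}+\Lambda)^{-1}\K_\circ$ is positive. Hence $\mathcal{L}_\lambda$ is a positive bounded linear operator on $X$.

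For (b), which I expect to be the main obstacle, compactness of $\mathcal{L}_\lambda$ hinges on the smoothing effect of composing the integral operator $\K^\circ(\lambda)$ with the transport resolvent. The idea is to write the composition explicitly: applying the kernel first and then the resolvent yields an operator whose action on $g$ is of the form
\begin{equation*}
\int_0^{\kappa^D_{r,\upsilon}}{\rm e}^{-\lambda t - \int_0^t\sigma^i(r+\upsilon s,\upsilon)\d s}\!\!\int_V \texttt{k}_{i,j}^\lambda(r+\upsilon t,\upsilon,\upsilon')g(j,r+\upsilon t,\upsilon')\,\d\upsilon'\,\d t,
\end{equation*}
with $\texttt{k}^\lambda_{i,j}$ incorporating the delayed feedback through $\M(\lambda\texttt{I}_{m-\ell}+\Lambda)^{-1}\K_\circ$. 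Under the piece-wise continuity hypothesis on the cross sections, a change of variables from $(r,t)$ to the end-point $r+\upsilon t\in D$ (with the boundedness of $|\upsilon|$ from below ensuring a non-degenerate Jacobian) exhibits $\mathcal{L}_\lambda$ as an integral operator on $D\times V$ with a kernel in $L_2$, i.e.\ Hilbert--Schmidt, hence compact. This is essentially the velocity-averaging/compactness argument in Chapter~4 of \cite{M-K}, and once the kernel is written in closed form, Hilbert--Schmidt bounds follow from the uniform boundedness of the cross sections together with the finite volume of $D\times V$.

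For (c), I would use the irreducibility hypotheses \eqref{irred1}--\eqref{irred2}. Pick any strictly positive continuous test function $h \in \mathcal{C}$. The two-step chain given by the $k=k_{i,j}$ guaranteed by the hypotheses, combined with the fact that every prompt type scatters (so $\sigma^i_{\texttt s}\pi^i_{\texttt s}$ is bounded away from zero on $D\times V\times V$), implies that $\mathcal{L}_\lambda^2 h$ is bounded below by a strictly positive constant on $D\times V$ component-wise, giving $\|\mathcal{L}_\lambda^n h\|_2 \geq c^n\|h\|_2$ for some $c>0$, whence $r(\mathcal{L}_\lambda)\geq c>0$. Putting (a)--(c) together, Theorem~\ref{KR} yields an eigenvalue $r(\mathcal{L}_\lambda)>0$ with positive eigenfunction $\varphi^\circ_\lambda\in \mathcal{C}$, and the irreducibility argument also underlies the leading/simple character of this eigenvalue (to be used in the subsequent proof of Theorem~\ref{leadingeval}).
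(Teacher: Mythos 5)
Your overall architecture (apply Krein--Rutman to $\mathcal{L}_\lambda=(\lambda \texttt{I}_\ell-\T)^{-1}\K^\circ(\lambda)$ after checking positivity, compactness and $r(\mathcal{L}_\lambda)>0$) is exactly the paper's, and step (a) is fine. The two substantive verifications, however, both have genuine gaps.

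Compactness: your Hilbert--Schmidt claim does not work. In the displayed formula the integration variables are $(t,\upsilon')$, and $g$ is evaluated at $(j,\,r+\upsilon t,\,\upsilon')$; as $t$ ranges over $[0,\kappa^D_{r,\upsilon})$ the spatial evaluation point sweeps out only the one-dimensional chord of $D$ through $r$ in direction $\upsilon$. Hence $\mathcal{L}_\lambda$ is \emph{not} an integral operator with respect to $\d r''\d\upsilon'$ on $D\times V$ (its ``kernel'' is a measure supported on a Lebesgue-null set), and no change of variables produces an $L_2$ kernel; the Jacobian $t^{-n}$ you have in mind only appears for the two-sided composition $\K(\lambda\texttt{I}_\ell-\T)^{-1}\K$, which is what the paper writes down in its irreducibility step. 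Compactness of the one-sided composition is true but requires the velocity-averaging machinery: the paper verifies that $r\mapsto \K_{i,j}\cdot(r,\cdot)$ is a \emph{regular} family of operators on $L_2(V)$ (using the piecewise continuity of the cross sections) and then invokes \cite[Theorem 4.1]{M-K}, which is also precisely where the convexity of $D$ -- which you never use -- enters.

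Spectral radius: your iteration argument assumes $\mathcal{L}_\lambda^2 h$ is bounded below by a positive constant on all of $D\times V$. This fails near the boundary: for $(r,\upsilon)$ with $\upsilon\cdot{\bf n}_r>0$ and $r$ close to $\partial D$, the exit time $\kappa^D_{r,\upsilon}$ is arbitrarily small, so the outermost resolvent $(\lambda\texttt{I}_\ell-\T)^{-1}$ forces $\mathcal{L}_\lambda^2 h(r,\upsilon)\to 0$ there. What the irreducibility hypotheses \eqref{irred1}--\eqref{irred2} actually give (via the explicit kernel lower bound for $\K(\lambda\texttt{I}_\ell-\T)^{-1}\K$) is that $\mathcal{L}_\lambda^2$ is positivity improving, i.e.\ a.e.\ strictly positive output -- not uniformly positive -- so the bound $\|\mathcal{L}_\lambda^n h\|_2\ge c^n\|h\|_2$ does not follow. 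The paper closes this gap with de Pagter's theorem (\cite[Theorem 5.7]{M-K}): a compact, irreducible, positive operator has strictly positive spectral radius. You need that (or an equivalent substitute) before Krein--Rutman can be applied.
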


\begin{proof} 
In relation to the Krein-Rutman theorem stated above, our Banach space is $X = \prod_{i = 1}^m L_2(D \times V)$ and the corresponding cone is $\mathcal{C} = \prod_{i = 1}^m L^+_2(D \times V)$. It is clear that this cone is convex, and since every $L_2$ function can be written as the difference of its positive and negative parts, $\mathcal{C}$ satisfies the assumptions of the theorem.
We now break the rest of the proof into a number of steps which are stated with a proof immediately afterwards. 
\smallskip

{\it Step 1.} First we claim that $(\lambda \texttt{I}_\ell - \T)^{-1}\K^\circ(\lambda)$ is a compact operator.
\smallskip

Fix $1\leq i,j\leq m$. By Fubini's Theorem we have that $r\mapsto \K_{i,j}f(r,\upsilon)$ is measurable for $g\in L_2(D\times V)$. The operators $ \K_{i,j}$ are also integral operators and therefore  are continuous on $L_2(V)$ and compact. The assumed piecewise continuity of the cross sections $\sigma_{\texttt s}^i\pi^i_{\texttt s}$ and $\sigma_{\texttt f}^i\pi^{i,j}_{\texttt f}$ and the boundedness of the domain $V$ is sufficient to ensure that $r\mapsto \K_{i,j}\cdot (r,\cdot)$ is continuous under the operator norm on $L_2(V)$ and hence $\{\K_{i,j}\cdot (r,\cdot): r\in D\}$ forms a relatively compact set in the space of linear operators on $L_2(V)$. With these properties, the mapping $r\mapsto \K_{i,j}\cdot (r,\cdot)$, for $r\in D$, is said to be {\it regular}. One similarly (but more easily) shows that $r\mapsto \M_{i,j}\cdot(r,\cdot)$ is regular for $r\in D$ as operators on $L_2(V)$.
By linearity, this implies that, for $1\leq i,j\leq\ell$, the mapping  $r\mapsto K^\circ(\lambda)_{i,j}$ is also regular. Hence, by~\cite[Theorem 4.1]{M-K}, $(\lambda \texttt{I}_\ell - \T)^{-1}\K^\circ(\lambda)$ is a compact operator. %{\it Note, it is the application of this theorem that requires the domain $D$ to be convex. }

\begin{remark}\rm It is precisely at the application of \cite[Theorem 4.1]{M-K} that we need the convexity of the domain $D$, as this is required within the aforesaid result.
\end{remark}
\smallskip

{\it Step 2.} Next we show that $(\lambda \texttt{I}_\ell- \T)^{-1}\K^\circ(\lambda)$ is a positive irreducible operator.

\smallskip

Positivity is a straightforward consequence of the assumptions on the operators $\K_{i,j}$ and the form of the semigroup defined in ~\eqref{TUresolvent}.
For irreducibility, it is enough to show that there exists an integer $n \ge 1$ such that $[(\lambda \texttt{I}_\ell- \T)^{-1}\K^\circ(\lambda)]^nf > 0$ for each $f\in\prod_{i=1}^\ell L^+_2(D\times V)$. 
To this end, note that the entries of $\K^\circ(\lambda)(\lambda \texttt{I}_\ell- \T)^{-1}\K^\circ(\lambda)$ satisfy
\begin{align*}
[\K^\circ(\lambda)(\lambda \texttt{I}_\ell- \T)^{-1}\K^\circ(\lambda)]_{i,j} &\ge [\K^\circ(\lambda \texttt{I}_\ell - \T)^{-1}\K^\circ]_{i,j} = \sum_{k=1}^\ell  \K_{i,k}(\lambda  \texttt{I}_\ell- \T)^{-1}_{k,k}\K_{k,j},
\end{align*} 
and that $\K_{i,k}(\lambda \texttt{I}_\ell - \T)^{-1}_{k,k}\K_{k,j}$ is an integral operator $L_2(D\times V) \to L_2(D \times V)$, $1\leq i,j\leq \ell$, whose kernel is greater than or equal to
\begin{equation}
\int_0^{\infty}{\rm e}^{-\lambda +\underline{\sigma}^k(\frac{r-r'}{t}))t}\texttt{k}_{i,k}\left(r',\frac{r-r'}{t}, v'' \right)\texttt{k}_{k,j}\left(r,v, \frac{r-r'}{t} \right)\frac{\D t}{t^n},
\label{lb}
\end{equation}
where $\underline\sigma^k(v) = \inf_{r\in D}\{\sigma^k(r,\upsilon)\}$. 
Note, in order to produce this estimate, the reader will note that $(\lambda \texttt{I}_\ell - \T)^{-1}_{k,k}$ is the resolvent of $(\texttt{U}^{\boldsymbol{T}}_t, t\geq 0)$ in \eqref{TUresolvent}.
If we choose the index $k$ as in the assumptions~\eqref{irred1} and~\eqref{irred2} then the lower bound \eqref{lb} ensures that  $[\K^\circ(\lambda)(\lambda \texttt{I}_\ell - \T)^{-1}\K^\circ(\lambda)]_{i,j}$ is positivity improving. It follows that $[(\lambda \texttt{I}_\ell- \T)^{-1}\K^\circ(\lambda)]^2$ is also positivity improving and therefore $(\lambda \texttt{I}_\ell- \T)^{-1}\K^\circ(\lambda)$ is irreducible.
  
\smallskip

{\it Step 3}. We claim that there exists a non-negative eigenfunction  $ 0\neq\varphi_\lambda\in \prod_{i = 1}^\ell L_2(D\times V) $ for the operator $(\lambda \texttt{I}_\ell- \T)^{-1}\K^\circ(\lambda)$ with 
eigenvalue that agrees with ${r}\big((\lambda \texttt{I}_\ell - \T)^{-1}\K^\circ(\lambda)\big)$.
\smallskip

We use de Pagter's Theorem, cf. \cite[Theorem 5.7]{M-K}, which says that the spectral radius of an irreducible operator is strictly positive; that is to say ${r}\big((\lambda \texttt{I}_\ell - \T)^{-1}\K^\circ(\lambda)\big)>0$. In turn the Krein-Rutman Theorem \ref{KR} states that ${r}\big((\lambda \texttt{I}_\ell - \T)^{-1}\K^\circ(\lambda)\big)$ is thus an eigenvalue for the operator $(\lambda \texttt{I}_\ell- \T)^{-1}\K^\circ(\lambda)$ with a corresponding non-negative eigenfunction $\varphi^\circ_{\lambda}$.
\end{proof}

\begin{proof}[Proof of Theorem \ref{leadingeval}](i)
In looking for a non-negative eigenfunction of $\bA$ with real eigenvalue,  our earlier discussion tells us we must equivalently look for a solution to \eqref{Apsi} and hence \eqref{criteriaPSI}. This is equivalent to finding a real value  $\lambda_c$ such that 
$r\big((\lambda_c \texttt{I}_\ell - \T)^{-1}\K^\circ(\lambda_c)\big) = 1$. We again achieve this goal in steps.

\bigskip

{\it Step 1}. We want to show that 
 \begin{equation}
\lim_{\lambda \downarrow -\lambda_{\ell +1}}{r}\big((\lambda \texttt{I}_\ell- \T)^{-1}\K^\circ(\lambda)\big) = \infty.
\label{tooo}
\end{equation} \smallskip
 
Recall that  $(\lambda \texttt{I}_\ell- \T)^{-1}\K^\circ(\lambda)$ is compact and irreducible so by~\cite[Theorem 5.13]{M-K} we have the comparison of the spectral radii, 
\begin{equation}
{r}\big((\lambda \texttt{I}_\ell- \T)^{-1}\K^\circ(\lambda)\big) \ge {r}\big((\lambda \texttt{I}_\ell- \T)^{-1}\Delta[{\K}^\circ(\lambda)]\big),
\label{ineq1}
\end{equation}
where $\Delta[{\K}^\circ(\lambda)]$ is the matrix whose entries are given by $\Delta[{\K}^\circ(\lambda)] =\diagonal( \K^\circ(\lambda)_{1,1}, \cdots, \K^\circ(\lambda)_{\ell,\ell} )$.
\smallskip

Suppose $\Delta$ is an $\ell\times \ell$  whose diagonal entries are given by operators $\Delta_i$ on $L_2(D\times V)$, for $i = 1,\cdots, \ell$. If $\mu \in \sigma(\Delta_1)$, the spectrum of $\Delta_1$, then $(\mu \texttt{I}_\ell - \Delta)_{1,1}$ is not invertible, and so $\mu \texttt{I}_\ell - \Delta$ is also not invertible. Hence $\mu \in \sigma(\Delta)$, the spectrum of $\Delta$, and so $\sigma(\Delta_1) \subset \sigma(\Delta)$. Applying this  argument to the diagonal matrix $(\lambda \texttt{I}_\ell- \T)^{-1}\Delta{\K}^\circ(\lambda)$, we have that 
\begin{equation}
\sigma([(\lambda \texttt{I}_\ell- \T)^{-1}\Delta{\K}^\circ(\lambda)]_{1,1}) \subset \sigma((\lambda \texttt{I}_\ell- \T)^{-1}\Delta{\K}^\circ(\lambda))
\label{inclusion}
\end{equation}
and so
\begin{equation}
{r}\big((\lambda \texttt{I}_\ell- \T)^{-1}\Delta{\K}^\circ(\lambda)\big) \ge {r}\big([(\lambda \texttt{I}_\ell- \T)^{-1}\Delta {\K}^\circ(\lambda)]_{1,1}\big)
\geq {r}\big((\lambda - \bT_1-\sigma^1)^{-1}\Delta [{\K}^\circ(\lambda)]_{1,1}\big).
\label{ineq2}
\end{equation}
where, in the final inequality, we have used \eqref{inclusion}.

\smallskip

Next recall that $(\lambda \texttt{I}_\ell - \T)^{-1}\K^\circ(\lambda)\varphi^\circ =\varphi^\circ\text{ where }\K^\circ(\lambda) = \K^\circ+ \M(\lambda\texttt{I}_{m-\ell}+\Lambda)^{-1}\K_\circ$. Similar reasoning to the proofs of previous steps shows us that 
$
(\lambda - \bT_1-\sigma^1)^{-1}\Delta [{\K}^\circ(\lambda)]_{1,1}$ and $(\lambda - \bT_1-\sigma^1)^{-1}\sigma^1_{\texttt f}m^{\ell+1}(\K_\circ)_{1,\ell+1}    $ are both compact and irreducible operators, so that
\begin{equation}
{r}\big((\lambda - \bT_1-\sigma^1)^{-1}\Delta [{\K}^\circ(\lambda)]_{1,1}\big) \ge 
\frac{{r}\big((\lambda - \bT_1-\sigma^1)^{-1}\sigma^1_{\texttt f}m^{\ell+1}(\K_\circ)_{1,\ell+1}    \big)}{\lambda + \lambda_{\ell +1}} > 0,
\label{ineq3}
\end{equation}
where the first inequality follows from~\cite[Theorem 5.13]{M-K} and the second follows from~\cite[Theorem 5.7]{M-K}.
Combining~\eqref{ineq1},~\eqref{ineq2} and~\eqref{ineq3}, we have
\[
{r}\big((\lambda \texttt{I}_\ell- \T)^{-1}\K^\circ(\lambda)\big) \ge \frac{{r}\big((\lambda - \bT_1-\sigma^1)^{-1}\sigma^1_{\texttt f}m^{\ell+1}(\K_\circ)_{1,\ell+1}  \big)}{\lambda + \lambda_{\ell +1}}  > 0,
\]
with the latter term tending to $\infty$ as $\lambda \to -\lambda_{\ell +1}$.

\smallskip

{\it Step 2.} Next we need to show that 
\[
\lim_{\lambda \to \infty}r\big((\lambda \texttt{I}_\ell- \T)^{-1}\K^\circ(\lambda)\big) < 1
\]  
%We claim that there exists a $\lambda_c$ such that ${r}\big((\lambda \texttt{I}_\ell- \T)^{-1}\K^\circ(\lambda)\big)= 1$, in which case we will have 
%$(\lambda_c \texttt{I}_\ell- \T)^{-1}\K^\circ(\lambda_c)\varphi^\circ_{\lambda_c} = \varphi^\circ_{\lambda_c}.
%$
\smallskip

The spectral radius ${r}\big((\lambda \texttt{I}_\ell- \T)^{-1}\K^\circ(\lambda)\big)$ as is $\K^\circ(\lambda)$. Using the standard operator norm $\norm{\cdot}_2$ on $\prod_{i=1}^\ell L_2(D\times V)$,
 \[
\norm{\K^\circ(\lambda) g}_2=\norm{\M\left(
\diagonal\big((\lambda +\lambda_{\ell+1})^{-1},\cdots, (\lambda +\lambda_{m})^{-1}\big)
\right)
\K_\circ g}_2
 \]  
 and, hence, by inspection, $\K^\circ(\lambda)$ is decreasing with $\lambda$ and
 tends to $\K^\circ$ as $\lambda \to \infty$. Note, moreover, that for all 
$g\in\prod_{i=1}^\ell L_2(D\times V)$, 
\[
(\lambda \texttt{I}_\ell- \T)^{-1}\K^\circ(\lambda) g= \int_0^\infty {\rm e}^{-\lambda t}\langle f, \texttt{U}^{\boldsymbol{T}}_t[\K^\circ(\lambda) g]\rangle \d t,
\]
showing similarly that $(\lambda \texttt{I}_\ell- \T)^{-1}\K^\circ(\lambda)$ is decreasing in $\lambda$.
 Due to~\cite[Lemma 8.1]{M-K} (note that it is not difficult to see from the proof of that lemma that that the order of the operators there  can be reversed), we have 
\[
\lim_{\lambda \to \infty}r\big((\lambda \texttt{I}_\ell- \T)^{-1}\K^\circ(\lambda)\big) < 1.
\] 

{\it Step 3.} In this penultimate step, we show that 
we have found a non-negative function of $\bA$, with eigenvalue $\lambda_c$.
\smallskip

We  have the existence of a  $\lambda_c > -\lambda_{\ell +1}$ such that ${r}((\lambda \texttt{I}_\ell- \T)^{-1}\K^\circ(\lambda))= 1$. 
That is to say, thanks to Proposition \ref{intereval}, we have found $\varphi^\circ = \varphi^\circ_{\lambda_c}$ which solves \eqref{criteriaPSI}, which in turn, thanks to \eqref{2Psis} gives us that $\varphi_\circ = 
(\lambda\texttt{I}_{m-\ell}+\Lambda)^{-1}\K_\circ\varphi^\circ_{\lambda_c}$ so that with the concatenation 
\[
\varphi = (\varphi^\circ_{\lambda_c}, (\lambda_c\texttt{I}_{m-\ell}+\Lambda)^{-1}\K_\circ\varphi^\circ_{\lambda_c})\geq 0
\]
we have the eigensolution
\[
\boldsymbol{A}\varphi = \lambda_c \varphi.
\]
which is equivalent to $\bA\varphi = \lambda_c\varphi$.

\smallskip

{\it Step 4.} For the final step we need to show that $\lambda_c$ is the leading real eigenvalue of $\bA$, i.e.
\[
\lambda_c = s(\boldsymbol{A}) \coloneqq \sup\{{\rm Re}(\lambda) : \lambda \in \sigma(\boldsymbol{A})\},
\]
where $\sigma(\boldsymbol{A})$ is the spectrum of the operator $\boldsymbol{A}$ or equivalently of $\bA$.  Moreover we need to show that it is simple and isolated.
\smallskip

We first note that since we have shown that $\lambda_c \in \sigma(\boldsymbol{A})$, in particular that the spectrum is non-empty, it follows from~\cite[Theorem 5.2]{M-K} that $s(\boldsymbol{A}) \in \sigma(\boldsymbol{A})$. Now suppose that $\lambda_c \neq s(\boldsymbol{A})$ so that, in particular, $\lambda_c < s(\boldsymbol{A})$. Then, thanks again to~\cite[Lemma 8.1]{M-K},  $r\big((s(\boldsymbol{A}) \texttt{I}_{\ell}- \T)^{-1}\K^\circ(s(\boldsymbol{A}))\big) < 1$ and so $1$ is not an eigenvalue of $(s(\boldsymbol{A}) \texttt{I}_{\ell}- \T)^{-1}\K(s(\boldsymbol{A}))$. Said another way, this means that $s(\boldsymbol{A})$ is not an eigenvalue of $\boldsymbol{A}$ (and hence of $\bA$), leading to a contradiction. Algebraic and geometric simplicity of $\lambda_c$ follows from~\cite[Remark 12]{DL6} and~\cite[Theorem 7(iii)]{DL6}, respectively.  
\end{proof}

Before turning to the proof of Theorem \ref{leadingeval} (ii), we must state another intermediary result which is translated from a general setting of Banach operators to our current situation; cf. \cite[Theorem 4.1]{M-K} and~\cite[p. 359, Theorem 22]{BMK}.
\begin{prop}\label{discretespectra}
Under the assumptions of Theorem \ref{leadingeval}
\[
\sigma(\boldsymbol{A}) \cap \{{\rm Re}(\lambda) : \lambda >s(\boldsymbol{T}) \}
\]
consists of isolated eigenvalues with finite multiplicities, where $s(\boldsymbol{T})  \coloneqq \sup\{{\rm Re}(\lambda) : \lambda \in \sigma(\boldsymbol{T})\}$. 
\end{prop}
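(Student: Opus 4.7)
The strategy is to reduce the study of $\sigma(\boldsymbol{A}) \cap \{\mathrm{Re}(\lambda) > s(\boldsymbol{T})\}$ to a spectral problem for a compact, operator-valued analytic function, and then appeal to the analytic Fredholm alternative. For $\lambda$ in the resolvent set $\rho(\boldsymbol{T}) = \{\lambda : \mathrm{Re}(\lambda) > s(\boldsymbol{T})\}$ we factorise
\[
\lambda \mathrm{Id} - \boldsymbol{A} = (\lambda\mathrm{Id} - \boldsymbol{T})\bigl(\mathrm{Id} - (\lambda\mathrm{Id} - \boldsymbol{T})^{-1}\boldsymbol{K}\bigr),
\]
from which one reads off that, on this half-plane, $\lambda \in \sigma(\boldsymbol{A})$ if and only if $1 \in \sigma\bigl((\lambda\mathrm{Id}-\boldsymbol{T})^{-1}\boldsymbol{K}\bigr)$, and moreover the geometric/algebraic multiplicities coincide.

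The first key step is to establish that the operator-valued map
\[
\lambda \longmapsto \mathcal{R}(\lambda) := (\lambda\mathrm{Id} - \boldsymbol{T})^{-1}\boldsymbol{K}
\]
is analytic and compact on $\{\mathrm{Re}(\lambda) > s(\boldsymbol{T})\}$. Analyticity is immediate from the analyticity of the resolvent of $\boldsymbol{T}$ on its resolvent set. For compactness, one argues block-by-block: the prompt--prompt block leads to operators of the type $(\lambda \texttt{I}_\ell - \T)^{-1}\K^\circ$ whose compactness has already been established in Step 1 of the proof of Proposition \ref{intereval} via the regularity criterion \cite[Theorem 4.1]{M-K} (this is precisely where convexity of $D$ enters). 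The off-diagonal blocks involving $\M$ and $\K_\circ$ compose a compact integral operator (regular in the same sense) with the resolvent of a multiplication/advection operator, and the purely delayed block simply contains the bounded multiplication operator $(\lambda\texttt{I}_{m-\ell} + \Lambda)^{-1}$ acting on ranges of the compact operators $\K_\circ$. Composition of compact with bounded is compact, so $\mathcal{R}(\lambda)$ is compact for each $\lambda \in \rho(\boldsymbol{T})$.

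With analyticity and compactness in hand, I would invoke the analytic Fredholm theorem (e.g.\ Steinberg's theorem): on the connected open set $\{\mathrm{Re}(\lambda) > s(\boldsymbol{T})\}$, either $1 \in \sigma(\mathcal{R}(\lambda))$ for every $\lambda$, or else the set
\[
\Sigma := \bigl\{\lambda : \mathrm{Re}(\lambda) > s(\boldsymbol{T}),\ 1 \in \sigma(\mathcal{R}(\lambda))\bigr\}
\]
consists of isolated points, at each of which $(\mathrm{Id} - \mathcal{R}(\lambda))^{-1}$ has a meromorphic continuation with finite-rank principal parts. To rule out the first alternative, I would use the estimate from Step 2 of the proof of Theorem \ref{leadingeval}: as $\lambda \to \infty$ along the real axis, $r(\mathcal{R}(\lambda)) < 1$, so $1 \notin \sigma(\mathcal{R}(\lambda))$ for all large real $\lambda$. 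Hence the second alternative prevails, $\Sigma$ is discrete, and at each point of $\Sigma$ the Fredholm alternative for $\mathrm{Id} - \mathcal{R}(\lambda)$ forces $\ker(\mathrm{Id} - \mathcal{R}(\lambda))$ to be finite dimensional, giving finite geometric (and hence algebraic) multiplicity for the corresponding eigenvalue of $\boldsymbol{A}$.

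The main obstacle I anticipate is the careful verification of compactness of $\mathcal{R}(\lambda)$ block-by-block, in particular ensuring that the regularity criterion of \cite[Theorem 4.1]{M-K} actually applies to the mixed blocks generated by $\M$ and $\K_\circ$ after convolution with the killed advection resolvent $(\lambda\texttt{I}_\ell - \T)^{-1}$; the piecewise continuity of the cross sections, the convexity of $D$, and the strict positivity of the decay rates $\lambda_{\ell+1}, \dots, \lambda_m$ (which controls the delayed block and keeps the off-diagonal factor $(\lambda\texttt{I}_{m-\ell}+\Lambda)^{-1}$ well-behaved down to $\mathrm{Re}(\lambda) = s(\boldsymbol{T})$) are the ingredients that make this work. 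Once compactness and analyticity are secured, the analytic Fredholm theorem does the rest mechanically.
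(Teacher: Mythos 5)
Your overall strategy --- factorising $\lambda\mathrm{Id}-\boldsymbol{A}=(\lambda\mathrm{Id}-\boldsymbol{T})(\mathrm{Id}-\mathcal{R}(\lambda))$ and running the analytic Fredholm alternative on the half-plane $\{\mathrm{Re}(\lambda)>s(\boldsymbol{T})\}$ --- is exactly the mechanism behind the result the paper invokes as a black box (it cites \cite[p.~359, Theorem 22]{BMK} together with the compactness of $(\lambda\boldsymbol{I}-\boldsymbol{T})^{-1}\boldsymbol{K}$ checked in Step~1 of Proposition \ref{intereval}). So you are not taking a different route; you are unfolding the cited theorem. The Fredholm machinery and the use of $r(\mathcal{R}(\lambda))<1$ for large real $\lambda$ to exclude the degenerate alternative are fine.

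The genuine gap is in your compactness claim for $\mathcal{R}(\lambda)$ itself, specifically the delayed row. Writing $\mathcal{R}(\lambda)$ in block form, its $(2,1)$ block is $(\lambda\texttt{I}_{m-\ell}+\Lambda)^{-1}\K_\circ$. You justify this by saying it is a bounded multiplication "acting on ranges of the compact operators $\K_\circ$", but $\K_\circ$ is \emph{not} compact on $\prod L_2(D\times V)$: each $\K_{i,j}$ integrates only in the velocity variable and acts locally (essentially as the identity) in $r$, so it fails to be compact on the product space --- test it on $f_n(r,\upsilon)=a(r)b_n(\upsilon)$ with $b_n$ weakly null. In this transport setting compactness is produced by velocity averaging, i.e.\ by sandwiching the advection resolvent $(\lambda\texttt{I}_\ell-\T)^{-1}$ with a collision operator; but the delayed block of $\boldsymbol{T}$ is just $-\Lambda$, whose resolvent is a scalar multiplication and smooths nothing. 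The same objection applies to the $(1,2)$ block $(\lambda\texttt{I}_\ell-\T)^{-1}\M$, since $\M$ is a pure multiplication operator and supplies no velocity integral. Hence $\mathcal{R}(\lambda)$ is not compact as stated, and the analytic Fredholm theorem cannot be applied to it directly.

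Two standard repairs are available. Either observe that $\mathcal{R}(\lambda)^2$ \emph{is} compact (every nonzero block of the square contains a factor of the form $(\lambda\texttt{I}_\ell-\T)^{-1}\K$ or $\K(\lambda\texttt{I}_\ell-\T)^{-1}\M$ with $\K$ a regular collision operator, which is compact by \cite[Theorem 4.1]{M-K} and its reversed-order variant) and note that power-compactness of the analytic family suffices for the Riesz--Schauder/meromorphic Fredholm theory of $\mathrm{Id}-\mathcal{R}(\lambda)$; or, more in keeping with the paper, first perform the Schur-complement elimination of the delayed components as in \eqref{2Psis}--\eqref{criteriaPSI}, which reduces the problem on $\{\mathrm{Re}(\lambda)>-\lambda_{\ell+1}\}=\{\mathrm{Re}(\lambda)>s(\boldsymbol{T})\}$ to the family $(\lambda\texttt{I}_\ell-\T)^{-1}\K^\circ(\lambda)$, and this family genuinely is compact and analytic (Step~1 of Proposition \ref{intereval}), so your Fredholm argument then closes without further difficulty.
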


Note the Theorem from which the above proposition is derived in  \cite[p. 359, Theorem 22]{BMK} requires as a sufficient condition that $(\lambda {\boldsymbol I} - \boldsymbol{T})^{-1}\boldsymbol{K}$ is compact, where $\boldsymbol{I}$ is an $m\times m$ identity matrix. This fact easily follows from the conclusion in Step 1 of the proof of Proposition \ref{intereval}. 
\smallskip

Finally we can complete the proof of Theorem \ref{leadingeval}

\begin{proof}[Proof of Theorem \ref{leadingeval}](ii)
It is also easy from the structure of $\boldsymbol{T}$ that $-\lambda_{\ell+1},\cdots, -\lambda_{m}$, belong to its spectrum. Moreover, for all $i = 1,\cdots, \ell$, $s(\bT_i - \sigma^i) = -\infty$. Since $-\lambda_{\ell + 1}$ is the largest of these eigenvalues, and $\lambda_c>-\lambda_{\ell + 1}$ (from part (i) of Theorem~\ref{leadingeval}), Proposition \ref{discretespectra} tells us that  $\sigma(\boldsymbol{A}) \cap \{\lambda : \rm{Re}(\lambda)>-\lambda_{\ell +1}\}$ contains at least one  isolated eigenvalue with finite (algebraic) multiplicity (i.e. the lead eigenvalue $\lambda_c$). \smallskip

%Standard spectral theory for operator generated $c_0$-semigroups, e.g. \cite[p. 265]{DL6}, tell us that $\sigma(\texttt{V}_t) \cap \{\lambda \in \mathbb{C} : |\lambda| > {\rm e}^{-\lambda_{\ell +1} t}\}$ is also a finite, non-empty set of isolated eigenvalues with finite multiplicities.\footnote{where is this used?} 
Suppose we enumerate the eigenvalues in $\sigma(\boldsymbol{A}) \cap \{\lambda : \rm{Re}(\lambda)>-\lambda_{\ell +1}\}$ in decreasing order by the set $\{\lambda^{(1)}, \cdots, \lambda^{(n)}\}$ (noting from earlier that we have at least $\lambda^{(1)}= \lambda_c$ and $\lambda^{(n)}> -\lambda_{\ell + 1}$). Then, from \cite[p. 265]{DL6}, for $g \in {\rm Dom}(\bA)$, we have
\[
\texttt{V}_t[g] = \sum_{k=1}^n {\rm e}^{\lambda^{(k)} t }\left(\sum_{m = 0}^{{\rm order}(\lambda^{(k)} ) - 1}t^m\Pi_k^m [g]\right) + O({\rm e}^{-\lambda_{\ell +1} t}),
\]
as $t\to \infty$, where $\Pi_k$ are projectors in ${\rm Dom}(\bA)$. \smallskip

We are really only interested in the projection onto the  eigenfunction that we know exists in the real part of the spectrum. The projector $\Pi_1$ can be written in the form
\[
\Pi_1[g] = \langle g, \tilde{\varphi}\rangle\varphi, \quad g\in \prod_{i= 1}^m L_2({D}\times V),
\]
where $\tilde\varphi$ is the left-eigenfunction with eigenvalue $\lambda_c$, which is guaranteed to exist by examining the preceding arguments for $\bA$ and re-applying them for $\fA := \fT+\fS+\fF$, the adjoint operator of $\bA$.
Hence, we have the following leading order expansion,
\begin{align*}
V_t[f] &= {\rm e}^{\lambda_ct}(f, \tilde{\varphi})\varphi   +O({\rm e}^{[\lambda^{(2)}\vee(-\lambda_{\ell+1} )]t}).
\end{align*}
Note that since, according to Proposition \ref{discretespectra},  $\lambda_c$ is isolated, there exists a $\varepsilon>0$ such that $\lambda^{(2)}\vee(-\lambda_{\ell+1} )<\lambda_c- \varepsilon$, where we understand $\lambda^{(2)} = -\infty$ if $n = 1$.
The statement of part (ii) of Theorem \ref{leadingeval} now follows. 
\end{proof}

\section*{Acknowledgements}  We are indebted to Paul Smith and Geoff Dobson from the ANSWERS modelling group at {\it Wood} for the extensive discussions as well as hosting at their offices in Dorchester. We would also like to thanks Minmin Wang, Ivan Graham, Matt Parkinson and Denis Villemonais  for useful discussions. Finally we would like to thank an enthusiastic referee for their comments and support of this article which is part review, part new results.

\section*{Glossary} For convenience, at the request of the referee, we include a short glossary of the shorthand terminology. 

\begin{table}[h]
%\caption{Index of some commonly used notation.}
%\begin{tabularx}{\textwidth}{l l l}
\begin{tabular}{l l l}
&&\\
\hline
Abbreviation & Description & Introduced \\
\hline
&&\\
NTE  & Neutron Transport Equation &  \eqref{NTE}, \eqref{bNTE} \\
MNTE  & Multi-species Neutron Transport Equation &  \eqref{promptNTE}, \eqref{delayNTE} \\
MNBP& Multi-species Neutron Branching Process& \S\ref{MNBPsect}\\
MNRW& Multi-species Neutron Random Walk &\S\ref{RW}\\
  & & \\
\hline

\end{tabular}
\label{table-notation}
\end{table}

\bibliography{references}{}

\begin{thebibliography}{10}

\bibitem{BMK}
J.~Banasiak and M.~Mokhtar-Kharroubi, editors.
\newblock {\em Evolutionary equations with applications in natural sciences},
  volume 2126 of {\em Lecture Notes in Mathematics}.
\newblock Springer, Cham, 2015.
\newblock Lectures from the 2013 CIMPA-UNESCO-South Africa School held in
  Muizenberg, July 22--August 2, 2013.

\bibitem{Bell}
G.~I. Bell.
\newblock On the stochastic theory of neutron transport.
\newblock {\em Nuc. Sci. \& Eng.}, 21:390--401, 1965.

\bibitem{BLP2}
A.~Bensoussan, J.-L. Lions, and G.~C. Papanicolaou.
\newblock Asymptotics for branching transport processes.
\newblock In {\em Computing methods in applied sciences and engineering
  ({P}roc. {T}hird {I}nternat. {S}ympos., {V}ersailles, 1977), {I}}, volume 704
  of {\em Lecture Notes in Math.}, pages 317--329. Springer, Berlin, 1979.

\bibitem{BLP}
A.~Bensoussan, J.-L. Lions, and G.~C. Papanicolaou.
\newblock Boundary layers and homogenization of transport processes.
\newblock {\em Publ. Res. Inst. Math. Sci.}, 15(1):53--157, 1979.

\bibitem{MCNTE}
A.~M.~G. Cox, S.C. Harris, E.~Horton, A.E. Kyprianou, and M.~Wang.
\newblock {M}onte {C}arlo methods for the neutron transport equation.
\newblock {\em Working document}, 2019.

\bibitem{D}
R.~Dautray, M.~Cessenat, G.~Ledanois, P.-L. Lions, E.~Pardoux, and R.~Sentis.
\newblock {\em M\'ethodes probabilistes pour les \'equations de la physique}.
\newblock Collection du Commissariat \`a l'\'energie atomique. Eyrolles, Paris,
  1989.

\bibitem{DL6}
R.~Dautray and J.-L. Lions.
\newblock {\em Mathematical analysis and numerical methods for science and
  technology. {V}ol. 6}.
\newblock Springer-Verlag, Berlin, 1993.
\newblock Evolution problems. II, With the collaboration of Claude Bardos,
  Michel Cessenat, Alain Kavenoky, Patrick Lascaux, Bertrand Mercier, Olivier
  Pironneau, Bruno Scheurer and R\'emi Sentis, Translated from the French by
  Alan Craig.

\bibitem{DS}
B.~Davison and J.~B. Sykes.
\newblock {\em Neutron transport theory}.
\newblock Oxford, at the Clarendon Press, 1957.

\bibitem{DMT}
D.~Down, S.~P. Meyn, and R.~L. Tweedie.
\newblock Exponential and uniform ergodicity of {M}arkov processes.
\newblock {\em Ann. Probab.}, 23(4):1671--1691, 1995.

\bibitem{Dynkin2}
E.~B. Dynkin.
\newblock {\em Diffusions, superdiffusions and partial differential equations},
  volume~50 of {\em American Mathematical Society Colloquium Publications}.
\newblock American Mathematical Society, Providence, RI, 2002.

\bibitem{EN}
K-J. Engel and R.~Nagel.
\newblock {\em A short course on operator semigroups}.
\newblock Universitext. Springer, New York, 2006.

\bibitem{SNTE2}
S.~C. Harris, E.~Horton, and A.E. Kyprianou.
\newblock Stochastic analysis of the neutron transport equation ii: Almost sure
  growth.
\newblock {\em arXiv:}, 2018.

\bibitem{Hbook}
T.~E. Harris.
\newblock {\em The theory of branching processes}.
\newblock Dover Phoenix Editions. Dover Publications, Inc., Mineola, NY, 2002.
\newblock Corrected reprint of the 1963 original [Springer, Berlin; MR0163361
  (29 \#664)].

\bibitem{SNTE}
E.~Horton, A.E. Kyprianou, and D.~Villemonais.
\newblock Stochastic analysis of the neutron transport equation i: Linear
  semigroup asymptotics.
\newblock {\em arXiv:1810.01779 [math.PR]}, 2018.

\bibitem{GPS}
M.J.~Parkinson I.G.~Graham and R.~Scheichl.
\newblock Modern {M}onte {C}arlo variants for uncertainty quantification in
  neutron transport.
\newblock {\em Festschrift for the 80th Birthday of Ian Sloan, eds. J. Dick,
  F.Y. Kuo, and H. Wozniakowski}, 2018.

\bibitem{J}
K.~J\"orgens.
\newblock An asymptotic expansion in the theory of neutron transport.
\newblock {\em Comm. Pure Appl. Math.}, 11:219--242, 1958.

\bibitem{KP}
A.~E. Kyprianou and S.~Palau.
\newblock Extinction properties of multi-type continuous-state branching
  processes.
\newblock {\em Stochastic Process. Appl.}, 128(10):3466--3489, 2018.

\bibitem{LPS}
B.~Lapeyre, \'E. Pardoux, and R.~Sentis.
\newblock {\em Introduction to {M}onte-{C}arlo methods for transport and
  diffusion equations}, volume~6 of {\em Oxford Texts in Applied and
  Engineering Mathematics}.
\newblock Oxford University Press, Oxford, 2003.
\newblock Translated from the 1998 French original by Alan Craig and Fionn
  Craig.

\bibitem{Leh}
J.~Lehner.
\newblock The spectrum of the neutron transport operator for the infinite slab.
\newblock {\em J. Math. Mech.}, 11:173--181, 1962.

\bibitem{LW2}
J.~Lehner and G.~M. Wing.
\newblock On the spectrum of an unsymmetric operator arising in the transport
  theory of neutrons.
\newblock {\em Comm. Pure Appl. Math.}, 8:217--234, 1955.

\bibitem{LW1}
J.~Lehner and G.~M. Wing.
\newblock Solution of the linearized {B}oltzmann transport equation for the
  slab geometry.
\newblock {\em Duke Math. J.}, 23:125--142, 1956.

\bibitem{L}
J.~Lewins.
\newblock Linear stochastic neutron transport theory.
\newblock {\em Proc. Roy. Soc. London Ser. A}, 362(1711):537--558, 1978.

\bibitem{MT}
S.~Maire and D.~Talay.
\newblock On a {M}onte {C}arlo method for neutron transport criticality
  computations.
\newblock {\em IMA J. Numer. Anal.}, 26(4):657--685, 2006.

\bibitem{T2}
S.~P. Meyn and R.~L. Tweedie.
\newblock Stability of {M}arkovian processes. {II}. {C}ontinuous-time processes
  and sampled chains.
\newblock {\em Adv. in Appl. Probab.}, 25(3):487--517, 1993.

\bibitem{T3}
S.~P. Meyn and R.~L. Tweedie.
\newblock Stability of {M}arkovian processes. {III}. {F}oster-{L}yapunov
  criteria for continuous-time processes.
\newblock {\em Adv. in Appl. Probab.}, 25(3):518--548, 1993.

\bibitem{M-K}
M.~Mokhtar-Kharroubi.
\newblock {\em Mathematical topics in neutron transport theory}, volume~46 of
  {\em Series on Advances in Mathematics for Applied Sciences}.
\newblock World Scientific Publishing Co., Inc., River Edge, NJ, 1997.
\newblock New aspects, With a chapter by M. Choulli and P. Stefanov.

\bibitem{MWY}
T.~Mori, S.~Watanabe, and T.~Yamada.
\newblock On neutron branching processes.
\newblock {\em Publ. Res. Inst. Math. Sci.}, 7:153--179, 1971/72.

\bibitem{PP}
I.~P\'azsit and L.~P\'al.
\newblock {\em Neutron Fluctuations: A Treatise on the Physics of Branching
  Processes}.
\newblock Elsevier, 2008.

\bibitem{PR69}
A.~Pazy and P.~H. Rabinowitz.
\newblock A nonlinear integral equation with applications to neutron transport
  theory.
\newblock {\em Arch. Rational Mech. Anal.}, 32:226--246, 1969.

\bibitem{PR}
A.~Pazy and P.~H. Rabinowitz.
\newblock On a branching process in neutron transport theory.
\newblock {\em Arch. Rational Mech. Anal.}, 51:153--164, 1973.

\bibitem{SG}
F.~Scheben and I.~G. Graham.
\newblock Iterative methods for neutron transport eigenvalue problems.
\newblock {\em SIAM J. Sci. Comput.}, 33(5):2785--2804, 2011.

\end{thebibliography}
\bibliographystyle{plain}

\end{document}